\newtheorem{thm}{Theorem}[section]
\newtheorem{prop}[thm]{Proposition}
\newtheorem{cor}[thm]{Corollary}
\newtheorem*{rem*}{Remark}
\theoremstyle{definition}
\newtheorem{example}[thm]{Example}
\newtheorem{rem}{Remark}
\def \e {{\varepsilon}}
\def \a {{\mathfrak{a}}}
\def \b {{\mathfrak{b}}}
\def \R {{\mathbb R}}
\def \H {{\mathbb H}}
\def \E {{\mathbb E}}
\def \N {{\mathbb N}}
\def \C {{\mathbb C}}
\def \g {\gamma}
\def \G {\Gamma}
\def \Z {\mathbb{Z}}
\def \psl  {{\hbox{PSL}_2( {\mathbb R})} }
\def \sl  {{\hbox{SL}_2( {\mathbb R})} }
\def \slz  {{\hbox{SL}_2( {\mathbb Z})} }
\def \GmodH {{\Gamma\backslash\mathbb H}}
\DeclareMathOperator{\sgn}{sgn}
\newcommand{\vol}[1]{\hbox{vol}\left( #1 \right)}
\newcommand{\abs}[1]{\left\lvert #1 \right\rvert}
\newcommand{\norm}[1]{\left\lVert #1 \right\rVert}
\newcommand{\inprod}[2]{\left \langle #1,#2 \right\rangle}
\title{The distribution of Manin's iterated integrals of modular forms.}
\date{today}
\author[N.  Matthes]{Nils Matthes}
\address{Department of Mathematical
  Sciences, University of Copenhagen, Universitets\-parken 5, 2100
  Copenhagen \O , Denmark}
\email{nils.oliver.matthes@gmail.com}
\author[M. Risager]{Morten S. Risager}
\address{Department of Mathematical
  Sciences, University of Copenhagen, Universitets\-parken 5, 2100
  Copenhagen \O , Denmark}
\email{risager@math.ku.dk}
\thanks{The research of Nils Matthes was supported by a Walter Benjamin Fellowship of the DFG.The research of  Morten S. Risager was supported by the Grant DFF-7014-00060B from Independent Research Fund Denmark}
\keywords{}
\subjclass[2020]{Primary 11F67; Secondary 11F72, 11M36  }
\date{\today}
\begin{document}
\begin{abstract} We determine the asymptotic distribution of Manin's iterated integrals of length  at most 2. For all lengths we compute all the asymptotic moments. We show that if the length is at least 3 these moments do in general \emph{not} determine a unique distribution.
\end{abstract}

\maketitle
\section{Introduction}

In a series of papers Chen \cite{Chen:1967, Chen:1971, Chen:1973a, Chen:1977} introduced the very influential concept of iterated integrals of differential forms. 
Later Manin \cite{Manin:2005, Manin:2006}  studied and further developed these in 
the case of  holomorphic forms on hyperbolic surfaces. Hain \cite[Sec. 5]{Hain:1998} had previously introduced the notion of iterated integrals with values in a flat bundle. Manin's iterated integrals generalise both the 
classical theory of modular symbols (\cite[Sec. 3]{Manin:2009}) and certain aspects of the theory of multiple zeta-values (see \cite{GilFresan:2017} and the references therein,  
\cite{ChoieIhara:2013, BruggemanChoie:2016, Choie:2016, ChoieIhara:2013, ChoieMatsumoto:2016, DeitmarHorozov:2013}).  The theory has fascinating connections to physics \cite{BognerBrown:2015}, 
mixed Tate motives \cite{DeligneGoncharov:2005,Brown:2012,Brown:2019c}, 
deformation quantization \cite{Kontsevich:1999, BanksPanzerPym:2020a}, 
knot invariants \cite{LeMurakami:1995, LeMurakami:1996,IharaTakamuki:2001}, and many other areas. 

Manin's iterated integrals (also called noncommutative modular symbols) are defined as follows: Let $f_1\ldots f_l$ be holomorphic cusp forms of weight 2 for the congruence group $\Gamma_0(q)$. Let 
\begin{equation}
I_{i\infty}^{\gamma i\infty}(f_1, \ldots f_l)\coloneqq \int_{i\infty}^{\g i \infty}f_1(z_1)\left(\int_0^{z_1}f_2(z_2)\ldots\left(\int_0^{z_{l-1}}f_l(z_l)dz_l\right)\ldots dz_2\right)dz_1.
\end{equation}

In this paper we study the statistical properties of these numbers when $\g\in \G_0(q)$, or -- what amounts to the same thing -- when $a/c=\g i\infty$ runs through the cusps of $\Gamma_0(q)$ equivalent to the cusp at infinity. To this end we consider \begin{equation}T(M)=\{a/c\in \mathbb Q\cap [0,1), c\leq M, (a,c)=1, q\vert c\},\end{equation} which we use to order the iterated integrals. We study the properties of the random variable 
\begin{equation}
Z_M(A):=\frac{\#\{\frac{a}{c}\in T(M)\vert\, \left(\frac{C_q}{\log(c)}\right)^{l/2} I_{i\infty}^{a/c}(f_1,\ldots f_l)\in A\}}{\#T(M)},\end{equation}
 in particular what happens when $M\to\infty.$ Here $C_q=\pi\cdot [\slz:\Gamma_0(q)]/24$, and  $A\subseteq\mathbb C$ is a Borel set.

If $l=1$ then $I_{i\infty}^{\gamma i\infty}(f_1)$ is a classical modular symbol. Such modular symbols play a prominent role in the study of modular forms (see e.g. \cite{Birch:1971,Manin:1972a, Mazur:1973a, Cremona:1997a}). This paper was motivated by the following result: 

\begin{thm}\label{length1}
If $l=1$ and $\norm{f}=1$ then $Z_M$, converges in distribution as $M\to\infty$ to the standard  complex normal distribution with distribution function $\frac{1}{\pi}e^{-\abs{z}^2}$.
\end{thm}
This was conjectured by Mazur and Rubin \cite{MazurRubin:2021}, proved by Petridis--Risager\cite{PetridisRisager:2018a} and reproved and extended by Nordentoft \cite{Nordentoft:2021c}, Lee--Sun \cite{LeeSun:2019}, Constantinescu \cite{Constantinescu:2020a}, Bettin--Drappeau \cite{BettinDrappeau:2019}, and Nordentoft--Drappeau \cite{DrappeauNordentoft:2022a}. We also give yet another proof of this result.

In this paper we investigates what happens when $l>1$. Write $f=(f_1,\ldots f_l)$.

\begin{thm}\label{intro-thm:final-convergence-length-two}
 If $l=2$ then $Z_M$ converges in distribution as $M\to\infty$ to a radially symmetric distribution $X(f)$. The distribution $X(f)$  depends only on the Gram matrix $\{\inprod{f_i}{f_j}\}_{i,j=1}^2$ related to the Petersson weight 2 inner product.  
\end{thm} 

The distribution in Theorem  \ref{intro-thm:final-convergence-length-two} can be given concretely in certain cases: \begin{enumerate}[label=\alph*)]
    \item If $f_1=f_2$ with $\norm{f_i}=1$ then $X(f)$ is the  Kotz-like distribution (\cite{Nadarajah:2003}) with distribution function $\frac{1}{\pi\abs{z}}e^{-2\abs{z}}$. 
    \item If $f_1,f_2$ form an orthonormal set then $X(f)$ has distribution function \begin{equation}
        \frac{1}{4}\int_{0}^1\frac{1}{y(1-y)}\frac{\displaystyle\sinh\left(\frac{\pi \abs{z}}{2\sqrt{y(1-y)}}\right)}{\cosh^2\left(\displaystyle\frac{\pi \abs{z}}{2\sqrt{y(1-y)}}\right)}dy. 
    \end{equation} Proving this explicit form involves a non-trivial combinatorial identity due to Fran\c con and Viennot \cite{FranconViennot:1979a}. See Example \ref{fun-example} for details.
\end{enumerate}

We now turn to the length $l\geq 3$ case: 
\begin{thm}
 \label{intro-thm:final-convergence-length-three} If $l\geq 3$ then  all asymptotic moments of $Z_M$ exist as $M\to\infty$  and there exists at least one rotationally invariant distribution with these as its moments. 
\end{thm}

We emphasize that Theorem  \ref{intro-thm:final-convergence-length-three} does not directly allow us to conclude convergence in distribution. In fact, in general the possible limit distributions are \emph{indetermined}, i.e there are infinitely many distributions whose moments agree with the asymptotic moments of $Z_M$. 

\begin{rem}
We can say quite a bit more when $f_1=f_2=\cdots =f_l$. In this case  \begin{equation}I_{a}^b(f)=\frac{I_a^b(f_1)^l}{l!},\end{equation} see \eqref{powers}. This allows us to conclude, using Theorem \ref{length1} and general probability theory results (more precisely the Portmanteau theorem), that $Z_M$ converges in distribution to $\frac{Z^l}{l!}$ where $Z$ is the standard complex normal distribution. A computation shows that this equals the Kotz-like distribution with distribution function \begin{equation}f\textsubscript{Kotz,$l$}(z)=\frac{(l!)^2}{l\pi }\frac{e^{-(l!\abs{z})^{2/l}}}{(l!\abs{z})^{2(1-l^{-1})}}.\end{equation} This Kotz-like distribution is known to be indetermined when $l\geq 3$, and hence it is not possible to conclude the convergence in distribution of $Z_M$ simply by knowing its moments. 
\end{rem} 

\begin{rem}It is possible to express the distribution results explained above as a distribution result of central values of additive twists of multiple $L$-functions. These additive twists of multiple $L$-functions are $GL_2$ generalizations of multiple polylogs which themselves generalize multiple zeta values (See e.g. \cite{GilFresan:2017}).

Define, for $\Re(s)\gg 1$, the multiple $L$-functions 
\begin{equation}
L(f,s)=\sum_{0=m_{l+1}<m_l<\cdots m_1}\frac{\prod_{i=1}^la_i(m_i-m_{i+1})}{m_l\cdots m_2 m_1^s}
\end{equation}
where $f=(f_1,f_2, \ldots,f_l)$. Here 
\begin{equation}f_i(z)=\sum_{n=1}^\infty a_i(n)q^n, \quad q=e(z)=\exp(2\pi i z)\end{equation}
is the Fourier expansion of $f_i$. The function $L(f,s)$ converges absolutely for $\Re(s)\gg 1$, admits holomorphic continuation to $s\in\C$, and satisfies a functional equation 
 relating $s$ and $2-s$ (see Theorem \ref{thm:functional-eq-multiple-zeta}). Given a rational number $a/c$ with $q\vert c$ we define the additive twist of $L(f,s)$ as  follows: Define, for $\Re(s)\gg 1$ 
\begin{equation}
L(f,\frac{a}{c},s)=\sum_{0=m_{l+1}<m_l<\cdots m_1}\frac{\prod_{i=1}^la_i(m_i-m_{i+1})}{m_l\cdots m_2 m_1^s}e(\frac{a}{c}m_1).
\end{equation}

Then $L(f,\frac{a}{c},s)$ converges absolutely for $\Re(s)\gg 1$, admits analytic continuation to  $s\in\C$ and satisfies a functional equation 
relating $s$ and $2-s$ (see Theorem \ref{thm:functional-eq-multiple-zeta-twisted}). The central value for this function satisfies \begin{equation}
    L(f,a/c, 1)=(2\pi i)^lI_{i\infty}^{a/c}(f_1\ldots f_l),
\end{equation}
and therefore Theorems
\ref{length1}, \ref{intro-thm:final-convergence-length-two}, and \ref{intro-thm:final-convergence-length-three}
gives a distribution result about the statistics of these central values. 
\end{rem}

 \begin{rem}Theorems \ref{intro-thm:final-convergence-length-two} and \ref{intro-thm:final-convergence-length-three} are proved  using the method of moments. In doing so the shuffle product formula for iterated integrals (see \eqref{shuffle-product}) plays a crucial role as an elegant device for handling higher moments.  An even more crucial tool in this paper is a higher order Eisenstein series twisted by Manin's iterated integrals, which we denote by $E^{v,w}(z,s)$ (See \eqref{def:higher-order-twist} for the definition). Such a series was first defined by Chinta, Horozov and O'Sullivan in \cite{ChintaHorozovOSullivan:2019} generalizing further an Eisenstein series twisted by modular symbols introduced by Goldfeld \cite{Goldfeld:1999a, Goldfeld:1999b}. Goldfeld's Eisenstein series has been studied intensively in \cite{OSullivan:2000a, Petridis:2002a, PetridisRisager:2004a, BruggemanDiamantis:2016a, JorgensonOSullivan:2008a, PetridisRisager:2018a}.
Chinta, Horozov and O'Sullivan proved several interesting properties about this series. We give new proofs of (some of) these results. Going further we find precise information about the pole at $s=1$ for $E^{v,w}(z,s)$ and its zero Fourier coefficient (See Theorems \ref{E-properties} and  \ref{thm:dirichlet-series-prop}). This is what ultimately allows us to conclude Theorems \ref{intro-thm:final-convergence-length-two} and \ref{intro-thm:final-convergence-length-three}. Our techniques also gives a new proof of Theorem \ref{length1}.
\end{rem}

\begin{rem}
There are several possible generalizations of the results in this paper: general weight, Eisenstein series, Bianchi groups, Maass forms, etc. For the length 1 case this has been done in \cite{Nordentoft:2021c}, \cite{Bettin:2019},  \cite{Constantinescu:2020a}, \cite{DrappeauNordentoft:2022a}. 

\end{rem}

\begin{rem}
The paper is organized as follows: In Section \ref{sec:Manin-integrals} we review Manin's construction of interated integrals and some of its properties. In Section \ref{sec:Eisenstein-twisted} we first define the twisted Eisenstein series $E^{v,w}(z,s)$ as well as some of its related series. We then use the spectral theory of the automorphic Laplacian to find the analytic properties. In Section \ref{sec:Moments-computation} we use these analytic properties combined with the shuffle relations for the iterated integrals to compute all the moments for two random variables; $X^v_M$ related to the Eisenstein series, and $Y^v_M$ related to its zero Fourier coefficient. 
In Section \ref{sec:probability} we prove a Fr\'echet-Shohat-type theorem for complex random variables which allows us to conclude Theorem \ref{length1} \ref{intro-thm:final-convergence-length-two}, \ref{intro-thm:final-convergence-length-three} in Section \ref{sec:distribution}. In Section \ref{sec:L-series} we analyse the relation between Manin's iterated integrals and additively twisted multiple $L$-series. Finally in Section \ref{sec:numerics} we show some numerics for $I_{i\infty}^{a/c}(f)$. 
\end{rem}

\section{Manin's noncommutative modular symbols}\label{sec:Manin-integrals}

The purpose of this section is to review the definition, as well as some properties, of Manin's noncommutative modular symbols \cite{Manin:2006}.

Let $S$ be a connected Riemann surface. We fix a finite set $V\subseteq \Omega^1(S)$ of holomorphic differential one-forms on $S$, and consider the free monoid $V^\ast$ on $V$. By definition, an element of $V^\ast$ is a word $v=v_1\ldots v_n$, where $v_i\in V$ --- this includes the empty word $\varepsilon$. We let $\ell(v)$ denote the length of the word $v$. Given $v=v_1\ldots v_n \in V^\ast$ and a piecewise smooth path $\gamma: [0,1]\rightarrow S$, we define their iterated integral by
\begin{equation} \label{eqn:iteratedintegral}
I_\gamma(v)\coloneqq \int_0^1f_1(t_1)\left(\int_0^{t_1}f_2(t_2)\ldots\left(\int_0^{t_{n-1}}f_n(t_n)dt_n\right)\ldots dt_2\right)dt_1,
\end{equation}
where $f_i(t)dt\coloneqq \gamma^*v_i$. If $v=\varepsilon$, then $I_\gamma(v)\coloneqq 1$. Also, since $dv_i=v_j\wedge v_k=0$ for all $v_i,v_j,v_k \in V$, an application of Stokes' theorem shows that the value of $I_\gamma(v)$ only depends on the homotopy class of $\gamma$ relative to its endpoints.

The case of interest for us is when $S=\mathbb H^\ast$ is the extended upper half-plane with its canonical coordinate $z$, and $V$ consists of one-forms of the shape $v=f(z)dz$, where $f(z)$ is a holomorphic cusp form of weight two for some subgroup cofinite subgroup $\Gamma \subseteq \operatorname{SL}_2(\mathbb R)$. As $\mathbb H^\ast$ is simply connected, the iterated integral \eqref{eqn:iteratedintegral} only depends on the endpoints of $\gamma$, so that we may unambiguously write $I_{\gamma(0)}^{\gamma(1)}(v)$ instead of $I_\gamma(v)$. Writing $\g(t_i)=z_i$ we see that 
\begin{equation}
    I_{a}^b(v)=\int_{a}^bf_1(z_1)\int_a^{z_1} f_2(z_2)\cdots \int_a^{z_{n-1}}f_n(z_n)dz_ndz_{n-1}\cdots dz_1
\end{equation}

In the special case where $\gamma(0)$ and $\gamma(1)$ are both cusps, the iterated integral is an example of a \emph{noncommutative modular symbol}. Also, note that for any fixed $a \in \mathbb H^\ast$, the function $z\mapsto I_a^z(v)$ is holomorphic.

In order to succinctly express properties of iterated integrals, it is customary to consider their generating series. To this end, let $X_V\coloneqq \{X_v \, : \, v\in V\}$ be a set of noncommuting variables indexed by $V$. We extend the notation $X_v$ to the case of a word $v=v_1\ldots v_n\in V^\ast$ by setting $X_v\coloneqq \prod_{i=1}^n X_{v_i}$. Now, denoting by $\Omega_V$ the formal differential one-form
\begin{equation}
\Omega_V\coloneqq \sum_{v\in V}v\cdot X_v,
\end{equation}
we define, for $a,b\in \mathbb H^\ast$, the series
\begin{equation}
J_a^b(\Omega_V)\coloneqq \sum_{v\in V^\ast}I_a^b(v)\cdot X_v \in \mathbb C\langle\!\langle X_V\rangle\!\rangle,
\end{equation}
where $\mathbb C\langle\!\langle X_V\rangle\!\rangle$ denotes the $\mathbb C$-algebra of formal power series in the variables $X_v$, for $v\in V$, with multiplication given by concatenating words.
\begin{prop} \label{prop:symbols-properties}
The series $J_a^b(\Omega_V)$ has the following properties.
\begin{enumerate}[label={(\roman*})]
\item \label{derivative}\emph{(Differential equation)}
For fixed $a \in \mathbb H^\ast$, the function $z\mapsto J_a^z(\Omega_V)$ satisfies
\begin{equation}
dJ_a^z(\Omega_V)=\Omega_V\cdot J_a^z(\Omega_V).
\end{equation}
\item \label{decomposition-paths}
\emph{(Composition of paths formula)}
We have
\begin{equation}
J_b^c(\Omega_V)\cdot J_a^b(\Omega_V)=J_a^c(\Omega_V),
\end{equation}
for all $a,b,c\in \mathbb H^\ast$
\item\label{gamma-invariance}
\emph{($\Gamma$-invariance)}
For all $\gamma\in \Gamma$, we have
\begin{equation}
J_{\gamma a}^{\gamma b}(\Omega_V)=J_a^b(\Omega_V).
\end{equation}
\item
\emph{(Grouplike property)} \label{shuffle}
Let $\Delta': \mathbb C\langle\!\langle X_V\rangle\!\rangle\rightarrow \mathbb C\langle\!\langle X_V\rangle\!\rangle  \, \widehat{\otimes} \, \mathbb C\langle\!\langle X_V\rangle\!\rangle$ be the continuous $\mathbb C$-algebra homomorphism given by $\Delta'(X_v)=X_v\otimes 1+1\otimes X_v$, for all $v\in V$. Then
\begin{equation}
\Delta'(J_a^b(\Omega_V))=J_a^b(\Omega_V) \, \widehat{\otimes} \, J_a^b(\Omega_V).
\end{equation}
\end{enumerate}
\end{prop}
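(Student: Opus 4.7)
The plan is to establish (i) and (ii) first and then to derive (iii) and (iv) as essentially formal consequences. For (i), the key input is just the fundamental theorem of calculus applied to the outermost integration: if $v=v_1\ldots v_n$ and $v_1 = f_1(z)dz$, then
\begin{equation}
d\,I_a^z(v_1\ldots v_n) \;=\; v_1\cdot I_a^z(v_2\ldots v_n).
\end{equation}
Multiplying by $X_v = X_{v_1}\cdot X_{v_2\ldots v_n}$ and summing over $v\in V^\ast$ (the empty word contributing $d(1)=0$) factorises precisely as $\Omega_V\cdot J_a^z(\Omega_V)$.

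For (ii), I would first establish the word-level decomposition
\begin{equation}
I_a^c(v_1\ldots v_n) \;=\; \sum_{k=0}^n I_b^c(v_1\ldots v_k)\cdot I_a^b(v_{k+1}\ldots v_n),
\end{equation}
by induction on $n$, using the splitting $\int_a^c = \int_b^c + \int_a^b$ on the outermost variable together with the induction hypothesis applied to the inner iterated integral (which depends on the outer variable as its endpoint). Collecting $X_v = X_{v_1\ldots v_k}\cdot X_{v_{k+1}\ldots v_n}$ across all words then repackages the identity as $J_b^c(\Omega_V)\cdot J_a^b(\Omega_V)$. A slicker alternative is to argue by uniqueness: both sides, viewed as functions of $c$, satisfy the ODE of (i) and agree at $c=b$.

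For (iii), the crucial observation is that each $v=f(z)dz\in V$ is itself $\G$-invariant as a one-form, since $f$ has weight two: $\gamma^\ast v = f(\gamma z)(cz+d)^{-2}dz = f(z)dz = v$. Performing the change of variables $z_i\mapsto\gamma z_i$ inside the iterated integral defining $I_{\gamma a}^{\gamma b}(v)$ then returns $I_a^b(v)$ coefficient by coefficient.

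The \emph{main obstacle} lies in (iv). The strategy I would pursue is to show that $F(z)\coloneqq\Delta'(J_a^z(\Omega_V))$ and $G(z)\coloneqq J_a^z(\Omega_V)\,\widehat{\otimes}\,J_a^z(\Omega_V)$ both satisfy the same first-order linear ODE with the same initial value at $z=a$. For $F$, one uses that $\Delta'$ is a continuous algebra homomorphism together with (i); for $G$, one applies the Leibniz rule in the completed tensor product. Both are then solutions of
\begin{equation}
dH(z) \;=\; \bigl(\Omega_V\otimes 1 + 1\otimes\Omega_V\bigr)\cdot H(z), \qquad H(a)=1\otimes 1,
\end{equation}
and uniqueness — which I would verify coefficient-by-coefficient, where solving reduces to iterated integration against already-known lower-order coefficients — forces $F=G$. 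Extracting the coefficient of $X_v\otimes X_w$ on both sides reproduces the classical shuffle product formula $I_a^b(v)\cdot I_a^b(w) = \sum_{\sigma\in\Sigma_{\ell(v),\ell(w)}} I_a^b(\sigma(vw))$, which could also be proved directly by an inductive Fubini argument on the product simplex. The genuinely delicate point is making the ODE argument rigorous in the completed tensor product; this is handled by noting that filtration by total word length makes the linear system triangular, so the uniqueness statement reduces to the scalar case in each graded piece.
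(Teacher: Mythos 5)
The paper states Proposition \ref{prop:symbols-properties} without proof, deferring to Manin \cite{Manin:2006} and Reutenauer \cite{Reutenauer:1993}; your argument fills that gap with the standard one from the theory of Chen iterated integrals, and it is correct. Parts (i)--(iii) are as one would expect: (i) is the fundamental theorem of calculus on the outermost variable, (ii) is Chen's path-composition identity (either by the inductive splitting $\int_a^c=\int_a^b+\int_b^c$ or by ODE uniqueness in $c$), and (iii) rests exactly on the right observation, namely that each $v=f\,dz$ with $f$ of weight two is $\gamma^\ast$-invariant so the substitution $z_i\mapsto\gamma z_i$ is the identity coefficient-by-coefficient. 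For (iv) your ODE-uniqueness argument is clean and rigorous once you note, as you do, that the system is triangular with respect to the word-length filtration on $\mathbb C\langle\!\langle X_V\rangle\!\rangle\,\widehat\otimes\,\mathbb C\langle\!\langle X_V\rangle\!\rangle$, so uniqueness is an induction on length and reduces to an elementary initial-value problem in each graded piece; checking the initial condition $H(a)=1\otimes 1$ and that $\Delta'(\Omega_V)=\Omega_V\otimes 1+1\otimes\Omega_V$ (since $\Delta'$ acts trivially on the one-form coefficients) completes that part. In short, this matches the standard proofs to which the paper implicitly appeals, and there are no gaps.
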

The grouplike property of $J_a^b(\Omega_V)$ gives an explicit formula for the product of any two of its coefficients. To elaborate on this, consider the free $\mathbb Q$-vector space $\mathbb Q\langle V\rangle$ on $V^\ast$, and define the \emph{shuffle product} by
\begin{equation}
\begin{aligned}
\shuffle: \mathbb Q\langle V\rangle\otimes \mathbb Q\langle V\rangle &\longrightarrow \mathbb Q\langle V\rangle\\
v_1\ldots v_m\otimes v_{m+1}\ldots v_{m+n}&\mapsto \sum_{\sigma\in \Sigma_{m,n}}v_{\sigma(1)}\ldots v_{\sigma(m+n)},
\end{aligned}
\end{equation}
where $\Sigma_{m,n}$ denotes the subset of all permutations $\sigma$ of the $\{1,\ldots,m+n\}$ such that $\sigma^{-1}$ is monotonously increasing on both $\{1,\ldots,m\}$, and on $\{m+1,\ldots,m+n\}$. By the duality between the shuffle product and the coproduct $\Delta'$ \cite[\S 1.5]{Reutenauer:1993}, Proposition \ref{prop:symbols-properties}.(iv) is then equivalent to the equality
\begin{equation}\label{shuffle-product}
I_a^b(v\shuffle w)=I_a^b(v)I_a^b(w),
\end{equation}
for all words $v,w\in V^\ast$. Of particular interest for us is the case of powers
\begin{equation}\label{shuffle-product-formula}
I_a^b(v)^n=\sum_{w\in V^\ast}c_{v^{\shuffle n}}(w)I_a^b(w),
\end{equation}
for an integer $n\geq 0$, 
where $c_{v^{\shuffle n}}(u) \in \mathbb Z\geq 0$ denotes the coefficient of the word $u \in V^\ast$ in the $n$-fold shuffle product $v\shuffle \ldots \shuffle v$. Note that for $v=v_1\cdots v_l\in V^\ast$
\begin{align}
\label{eq:sum-coefficients}
\sum_{u\in V^\ast}c_{v^{\shuffle n}}(u)&=\frac{(\ell(v)n)!}{(\ell(v)!)^n},
\intertext{and}
\label{eq:lower-bounds-coefficients}
\sum_{u\in V^\ast}c_{v^{\shuffle n}}(u)&\geq c_{v^{\shuffle n}}(v_1^n\cdots v_l^n)=(n!)^{\ell(v)}.
\end{align}
Note also that if $v_1\in V$ then \eqref{shuffle-product-formula} gives that \begin{equation}\label{powers}
    I_a^b(v_1)^n=n! I_a^b(v_1^n).
\end{equation}

\section{Eisenstein series twisted with noncommutative modular symbols}\label{sec:Eisenstein-twisted}
Chinta, Horozov, and O'Sullivan defined in \cite{ChintaHorozovOSullivan:2019} an Eisenstein series twisted by Manin's noncommutative modular symbols. This series is a  generalization of a certain Eisenstein series twisted by modular symbols introduced by Goldfeld \cite{Goldfeld:1999a, Goldfeld:1999b}. In this section we introduce (a slight variant of) the Eisenstein series introduced in \cite{ChintaHorozovOSullivan:2019} and prove several new results about it. 

Let $\H$ denote the upper half-plane equipped with the hyperbolic metric $ds$ and corresponding measure $d\mu(z)=y^{-2}dxdy$. The group of orientation preserving isometries is isomorphic to $\psl$ via linear fractional transformations, and the action of $\psl$ extends to the boundary of $\H$. Let $k$ be an even integer and let, for   $z\in \H$,
\begin{equation*}
    j_\gamma(z)=\frac{j(\g,z)}{\abs{j(\g,z)}} 
\end{equation*} where $j(\g,z)=cz+d$ where $c,d$ are the lower entries in $\gamma\in \sl.$ Note that \begin{equation}\label{j-properties}j(\g_1\g_2,z)=j(\g_1,\g_2 z)j(\g_2,z).\end{equation} Let $k$ be an even integer. For functions $f:\H\to \C$ we define for $\g\in \sl$   \begin{equation}
    (T_{k, \g}f)(z)=j^{-k}_\g(z) f(\g z).
\end{equation}
We have  
\begin{equation}\label{eq:compositions-slash}T_{k,\g_1}\circ T_{k,\g_2}=T_{k,\g_2\g_1}\end{equation} and since $k$ is even 
these maps factor through $\psl$.

Consider a cofinite, non-cocompact discrete subgroup  $\G$ of $\sl$. We assume for convenience that $-I\in \G$. Fix a set of inequivalent cusps of $\G$, and for each such cusp $\a$ fix a scaling matrix $\sigma_\a$; i.e a  matrix $\sigma_\a\in \sl$ satisfying $\sigma_\a(\infty)=\a$ and  $\G_\a=\sigma_\a\Gamma_\infty\sigma_{\a}^{-1}$. Here $\Gamma_\a$ is the stabilizer of $\a$ in $\Gamma$ and $\Gamma_\infty$ is the standard parabolic subgroup generated by the matrices corresponding to $z\mapsto z+1$.

Consider now two finite sets  of holomorphic cuspidal 1-forms on $\GmodH$
\begin{align}
    V&=\{\omega_1, \ldots ,\omega_n\}\\
    W&=\{\omega'_1, \ldots ,\omega'_m\},
\end{align} with corresponding total differential 
\begin{align}
    \Omega_V(z)&=\sum_{v\in V}v(z)X_v\\
    \Omega_W(z)&=\sum_{w\in W}w(z)Y_w
\end{align}
Here $X=\{X_w\vert w\in V\}$, $Y=\{Y_w\vert w\in W\}$ are free formal noncommuting (real) variables, where $X_vY_w=Y_wX_v$ for $v\in V, w\in W$. As before we also write $X_w=X_{w_1}\cdots X_{w_l}$ , for $w=w_1\cdots w_l\in V^*$, and similar with $Y_w$. 

Let $a\in \H^*$, $k$ an even integer, and $\a$ a cusp for $\G$ with scaling matrix $\sigma_\a$. 
We then define the \emph{total higher order twisted Eisensteins series} to be
\begin{equation}
\label{E-total}
E^{**}_\a{(z,s,k,\Omega_V, \Omega_W)}:=\sum_{\g\in \G_\a\backslash \Gamma} j_{\sigma_{\a}^{-1}\g}^{-k}(z)J_a^{\g a}(\Omega_V)\overline{J_a^{\g a}(\Omega_W)}\Im(\sigma_{\a}^{-1}\g z)^s,
\end{equation}
when $\Re(s)>1$. It follows from \cite[Cor 2.6]{ChintaHorozovOSullivan:2019} that $E^{**}_\a{(z,s,k,\Omega_V, \Omega_W)}$  converges absolutely and uniformly for $\Re(s)\geq 1+\e$, for any $\e>0$. We emphasize  that this means \emph{precisely} that the coefficients, the  \emph{higher order twisted Eisensteins series}, 
\begin{equation}
\label{def:higher-order-twist}
E^{vw}_\a{(z,s,k)}:=\sum_{\g\in \G_\a\backslash \Gamma} j_{\sigma_{\a}^{-1}\g}^{-k}(z)I_a^{\g a}(v)\overline{I_a^{\g a}(w)}\Im(\sigma_{\a}^{-1}\g)^s,    
\end{equation} in the formal expansion
\begin{equation}
E^{**}_\a{(z,s,k,\Omega_V, \Omega_W)}=\sum_{(v,w)\in V^{*}\times W^{*}}E^{v,w}_\a(z,s,k)X_vY_w,
\end{equation}
converge absolutely and uniformly for $\Re(s)\geq 1+\e$. These series are so-called higher order form. We refer to \cite{ChintaHorozovOSullivan:2019, JorgensonOSullivan:2008a, ChintaDiamantisOSullivan:2002a, ChoieDiamantis:2006, KlebanZagier:2003a, DiamantisSim:2008} for various results about higher order forms. 

\begin{rem}\label{rem:comparison}
To compare our notation with that of Chinta, Horozov, and O'Sullivan \cite[(3.3)]{ChintaHorozovOSullivan:2019} one may show, using the reversal of path formula \cite[Thm 3.19]{GilFresan:2017}
that our series   $E^{**}_\a{(z,s,0,\Omega_V, \Omega_W)}$ up to notational differences equals 
$\mathcal E_\a{(z,s)}$ when ${\bf{f}}=(-f_{\omega_1}, \ldots,-f_{\omega_n})$ and ${\bf{g}}=(-f_{\omega'_1}, \ldots,-f_{\omega'_m})$
 as defined in \cite[(3.3)]{ChintaHorozovOSullivan:2019}. 
\end{rem}

A complication arising from the fact that $E^{**}_\a{(z,s,0,\Omega_V, \Omega_W)}$ is a higher order form, and in particular not automorphic, is that it is not so easy to analyze using the spectral theory of the automorphic Laplacian. For this reason we introduce a related automorphic function as follows:

For $k$ an even integer and $\a$ a cusp for $\G$ with scaling matrix $\sigma_\a$ as in section \ref{sec:maass-background}  we define the \emph{total automorphic twisted Eisensteins series} 
\begin{equation}
\label{D-total}
D^{**}_\a{(z,s,k,\Omega_V, \Omega_W)}:=\sum_{\g\in \G_\a\backslash \Gamma}T_{k, \sigma_{\a}^{-1}\g} (J_\a^{\sigma_\a z}(\Omega_V)\overline{J_\a^{\sigma_\a z}(\Omega_W)}y^s),
\end{equation}
when $\Re(s)>1$. This defines also implicitly the coefficient functions, the \emph{automorphic twisted Eisenstein series},  
\begin{equation}D^{v,w}_\a(z,s,k):=\sum_{\g\in \G_\a\backslash \Gamma}T_{k, \sigma_{\a}^{-1}\g} (I_\a^{\sigma_\a z}(v)\overline{I_\a^{\sigma_\a z}(w)}y^s).\end{equation}
It follows again from \cite[Cor 2.6]{ChintaHorozovOSullivan:2019} that $D^{**}_\a{(z,s,k,\Omega_V, \Omega_W)}$  converges absolutely and uniformly for $\Re(s)\geq 1+\e$, for any $\e>0$. 

It is obvious from the definition and \eqref{eq:compositions-slash} that $D^{**}_\a{(z,s,k,\Omega_V, \Omega_W)}$ and $D^{v,w}_\a(z,s,k)$ are automorphic of weight $k$ for $\G$. Note that in the sum defining $D^{v,w}_\a(z,s,k)$ the contribution coming from the $\g=I$ equals $I_\a^{z}(v)\overline{I_\a^{z}(w)}y^s$ which for $\Re(s)>1$ goes to zero as $y\to\infty$. It follows from well-known properties of $E_\a{(z,s,0)}$ that $D^{v,w}_\a(z,s,k)$ is square integrable, so \begin{equation}\label{square-integrable}D^{v,w}_\a(z,s,k)\in L^2(\G, k)\end{equation} when $(v,w)\neq {(\e,\e)}$, and $\Re(s)>1$. 
\begin{rem}\label{rem:back-forth}
We now describe the precise relation between the \lq higher order\rq{} Eisenstein series $E^{**}_\a{(z,s,k,\Omega_V, \Omega_W)}$ and the automorphic form $D^{**}_\a{(z,s,k,\Omega_V, \Omega_W)}$:  By Proposition \ref{prop:symbols-properties} we have, for $\g\in \G$ 
\begin{equation}
    J_a^{\g a}(\Omega_V)=J_a^\a(\Omega_V)J_\a^{\gamma z}(\Omega_V)J_{z}^a(\Omega_V)
\end{equation}
where we have used that $J_{\g z}^{\g a}(\Omega_V)=J_{z}^a(\Omega_V)$. 
It follows that 
\begin{align}\label{ref:back-forth}
\begin{split}
E^{**}_\a&{(z,s,k,\Omega_V, \Omega_W)}\\
&= J_{\a}^a(\Omega_V)\overline{J_{\a}^a(\Omega_W)}D^{**}_\a{(z,s,k,\Omega_V, \Omega_W)} J_{z}^a(\Omega_V)\overline{J_{z}^a(\Omega_W)}.
\end{split}
\end{align}
From this we see that 
\begin{equation}
    E^{vw}_\a{(z,s,k)}=\sum_{\substack{v'v''v'''=v\\
    w'w''w'''=w}}I_a^\a(v')\overline{I_a^\a(w')}D^{v''w''}_\a(z,s,k)I_z^a(v''')\overline {I_z^a(v''')} .
\end{equation}
\end{rem}

It follows from the work of Chinta, Horozov, and O'Sullivan \cite[Sec. 4]{ChintaHorozovOSullivan:2019} and a small induction argument that $D^{v,w}_\a(z,s,k)$ admits meromorphic contination to $s\in \C$. We will reprove this fact in Section \ref{sec:continuation-etc} using a different proof, which gives better information about the pole structure at $s=1$. 

\subsection{The Laplacian}\label{sec:maass-background}
In order to analyze the functions $D^{vw}_\a(z,s,k)$ introduced above we apply the spectral theory of the automorphic Laplacian. In this subsection we fix notation and review some classic results about Maass forms and the Laplacian which will be instrumental in achieving this. For a more thorough introduction the reader should consult the classics \cite{Maass:1949a, Roelcke:1966a, Selberg:1956, Selberg:1965a, Fay:1977a}, or some more recent sources like \cite{Bump:1997a, Iwaniec:2002a, DukeFriedlanderIwaniec:2002a, Michel:2007}. 

We consider the weight $k$ raising and lowering 
operators
\begin{equation}
R_k=(z-\overline{z})\frac{\partial}{\partial z} +\frac{k}{2},\quad  L_k=(z-\overline{z})\frac{\partial}{\partial \overline z} +\frac{k}{2}  
    \end{equation} where $\frac{\partial}{\partial z}=\frac{1}{2}(\frac{\partial}{\partial x}-i\frac{\partial}{\partial y})$.  Beware that there are slight variations in the literature in the definition of these. We have the generalized Leibniz rules 
\begin{align}\label{ref:generalized-Leibniz}
\begin{split}
    R_{k+l}(fg)&=(R_kf)g+fR_lf,\\ 
    L_{k+l}(fg)&=(L_kf)g+fL_lf,
\end{split}
\end{align}
and the relations
\begin{align}
\label{eq:RT-commutator}   
\begin{split}
R_{k}\circ T_{k,\g}&= T_{k+2,\g}\circ R_{k},\\
L_{k}\circ T_{k,\g}&= T_{k-2,\g}\circ L_{k}.\\
\end{split}
\end{align}
The weight $k$ Laplacian may be defined by 
\begin{equation}\Delta_k=-R_{k-2}L_k-\frac{k}{2}(1-\frac{k}{2})=y^2\left(\frac{\partial^2}{\partial x^2}+\frac{\partial^2}{\partial x^2}\right)-iky\frac{\partial}{\partial x}\end{equation}

Recall that a function on $\H$ is called automorphic of weight $k$ for the group $\G$ if $T_{k,\g}f=f$ for all $\g\in \G$. An important class of automorphic functions of weight $k$ are the non-holomorphic Eisenstein series by the convergent series
\begin{equation}
    E_\a{(z,s,k)}:=\sum_{\g\in \G_\a\backslash \Gamma}T_{k, \sigma_{\a}^{-1}\g} (y^s), \quad \textrm{ when }\Re(s)>1.
\end{equation}
Selberg proved that this admits meromorphic continuation to $s\in \C$. For $\Re(s)>1/2$ it has finitely many poles; they are all real, and $s=1$ is a pole if and only if $k=0$ and in this case the residue of the Eisenstein series at $s=1$ equals the reciprocal of $\vol{\GmodH}$, the volume of $\GmodH$. The Eisenstein series satisies
\begin{align}
\label{ref:Eisenstein-properties}
\begin{split}
    (\Delta_k+s&(1-s))E_\a(z,s,k)=0\\
    R_k E_\a(z,s,k)&=(\frac{k}{2}+s)E_\a(z,s,k+2)\\
    L_k E_\a(z,s,k)&=(\frac{k}{2}-s)E_\a(z,s,k-2)
\end{split}
\end{align}

Another important class of automorphic functions of weight $k$ are coming from holomorphic cusp forms of weight $k$. If $f\in S_k(\G)$ is such a form then $g(z)=y^{k/2}f(z)$ is automorphic of weight $k$, satisfying \begin{equation}\Delta_k g=\frac{\abs{k}}{2}(1-\frac{\abs{k}}{2})g,\end{equation}  and for $k>0$                                    
\begin{equation}\label{ref:kill}
    L_kg=0,\textrm{ and }R_{-k}\overline g=0.
\end{equation}

A smooth automorphic functions $f$ of weight $k$ for $\G$ growing at most polynomially at the cusps and satisfying $(\Delta_k+\lambda_f)f=0$ for some $\lambda_f\in \C$ is called a Maass form. Above we have seen two types of examples of such forms.

Consider the set $L^2(\G,k)$ of automorphic functions on $\H$ of weight $k$ which are square integrable with respect to the inner product 
\begin{equation}
    \inprod{f}{g}=\int_\GmodH f(z)\overline{g(z)}d\mu(z)
\end{equation}
The operator $\Delta_k$ induces a selfadjoint unbounded operator, called the automorphic Laplacian on $L^2(\G,k)$ which, by a standard abuse of notation, is also  denoted by $\Delta_k$. For $g_k$, $h_{k+2}$ smooth and  automorphic of weight $k$, $k+2$ respectively  we have 
\begin{equation}\label{ref:adjoint}
    \inprod{R_k g_k}{h_{k+2}}=\inprod{g_k}{L_{k+2}h_{k+2}},
\end{equation}
assuming that the decay properties of all the involved functions are such that the integrals converge absolutely.

For $s$ away from the spectrum of the weight $k$ automorphic Laplacian the weight $k$ resolvent operator  $R(k,s)$ (not to be confused with the raising operator $R_k$) is a bounded operator on $L^2(\G,k)$ satisfying 
\begin{equation}
R(s,k)=-(\Delta_k+s(1-s))^{-1}
\end{equation}
At isolated points of the spectrum this operator has simple poles, with residue essentially the projection to the corresponding eigenspace. In particular when $k=0$ and $s=1$ we have 
\begin{equation}\label{eq:resolvent-splitting}R(s,0)=\frac{P_0}{(s-1)}+R_0(s,0)\end{equation} where $P_0$ is the projection to the set of constants, and  $R_0(s,0)$ is holomorphic at $s=1$. The norm of the resolvent $R(s,k)$ is bounded by the reciprocal of the distance between $-s(1-s)$ and the spectrum of $\Delta_k$.  

\subsection{Taking derivatives}
We are now ready to see how the raising, lowering and Laplace operator acts on $D^{**}_\a{(z,s,k,\Omega_V, \Omega_W)}$. Since, in this computation $V$, $W$ are fixed we omit $\Omega_V$ and $\Omega_W$ from the notation for the twisted Eisenstein series and the total iterated integral.

We write $\omega_i=f_i(z)dz$, and $\omega'_i=g_i(z)dz$ where $f_i$, $g_i$ are holomorphic cusp forms of weight 2, and denote 
\begin{align}
    \Omega_{V,1}&=\sum_{i=1}^nf_iX_{\omega_i}\in S_2(\G)\langle\!\langle X\rangle\!\rangle, \\ \Omega_{W,1}&=\sum_{i=1}^m g_iY_{\omega'_i}\in S_2(\G)\langle\!\langle Y\rangle\!\rangle
\end{align}
Note that whereas $\Omega_{V}$ is a 1-form $\Omega_{V,1}$ is a function

\begin{prop}{\quad }\label{prop:LaplaceD*}
\begin{enumerate}[label=(\roman*)]
\setlength\itemsep{0.8em}
    \item \label{raising}
        $R_kD^{**}_\a(z,s,k)=(z-\overline z)\Omega_{V,1}D^{**}_\a(z,s,k)+(k/2+s)D^{**}_\a(z,s,k+2)$
    \item \label{lowering} 
    $ L_kD^{**}_\a(z,s,k)=(z-\overline z)\overline{\Omega_{W,1}}D^{**}_\a(z,s,k)+(k/2-s)D^{**}_\a(z,s,k-2)$
\item \label{laplace}
$\begin{aligned}[t]
        (\Delta_k+s(1-s))&D^{**}_\a(z,s,k)\\=&-(z-\overline z)^2\Omega_{V,1}\overline{\Omega_{W,1}}D^{**}_\a(z,s,k)\\
        & 
        -(k/2+s)(z-\overline z)\overline{\Omega_{W,1}}D^{**}_\a(z,s,k+2)\\& -(k/2-s)(z-\overline z)\Omega_{V,1}D^{**}_\a(z,s,k-2)
    \end{aligned}$
\item \label{laplace-better}
$\begin{aligned}[t]
        (\Delta_k+s(1-s))&D^{**}_\a(z,s,k)\\=&(z-\overline z)^2\Omega_{V,1}\overline{\Omega_{W,1}}D^{**}_\a(z,s,k)\\
        & 
        -(z-\overline z)(\overline{\Omega_{W,1}}R_k +\Omega_{V,1}L_k)D^{**}_\a(z,s,k)
    \end{aligned}$
\end{enumerate}

\end{prop}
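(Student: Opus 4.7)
The plan is to establish parts \ref{raising} and \ref{lowering} by direct differentiation, deduce \ref{laplace} by applying $R_{k-2}$ to \ref{lowering} and invoking $\Delta_k = -R_{k-2}L_k - \tfrac{k}{2}(1-\tfrac{k}{2})$, and obtain \ref{laplace-better} by algebraic rewriting of \ref{laplace}. Since the defining sum converges absolutely for $\Re(s) \geq 1+\e$, differentiation may be carried out term-by-term throughout.

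For \ref{raising}, commuting $R_k$ past each $T_{k,\sigma_\a^{-1}\g}$ via \eqref{eq:RT-commutator} reduces the problem to computing $R_k F_\a$, where $F_\a(z,s) := J_\a^{\sigma_\a z}(\Omega_V)\overline{J_\a^{\sigma_\a z}(\Omega_W)}y^s$. Using Leibniz, the holomorphy of $J_\a^{\sigma_\a z}(\Omega_V)$ in $z$, the differential equation of Proposition \ref{prop:symbols-properties}\ref{derivative} combined with the chain rule (which inserts a factor $j(\sigma_\a,z)^{-2}$), and $(z-\bar z)\partial_z y^s = s y^s$, one obtains
\[ R_k F_\a = (z-\bar z)\, j(\sigma_\a,z)^{-2}\, \Omega_{V,1}(\sigma_\a z)\, F_\a + (k/2+s)\, F_\a. \]
After applying $T_{k+2,\sigma_\a^{-1}\g}$ and summing over $\g$, the second summand assembles to $(k/2+s) D^{**}_\a(z,s,k+2)$. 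For the first, the cocycle identity \eqref{j-properties} gives $j(\sigma_\a,\sigma_\a^{-1}\g z)^{-2} = j(\sigma_\a^{-1}\g,z)^2/j(\g,z)^2$; the weight-$2$ automorphy $\Omega_{V,1}(\g z) = j(\g,z)^2\Omega_{V,1}(z)$ (valid for $\g \in \G$, since each $f_i$ is a weight-$2$ cusp form for $\G$) cancels the $j(\g,z)^2$; and $j_\delta^{-(k+2)}(z)\, j(\delta,z)^2/|j(\delta,z)|^2 = j_\delta^{-k}(z)$ combines with $\Im(\sigma_\a^{-1}\g z) = y/|j(\sigma_\a^{-1}\g,z)|^2$ to turn the $\g$-th summand into $(z-\bar z)\Omega_{V,1}(z)$ times the $\g$-th summand of $D^{**}_\a(z,s,k)$. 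Part \ref{lowering} follows symmetrically, using antiholomorphy of $\overline{J_\a^{\sigma_\a z}(\Omega_W)}$ and $(z-\bar z)\partial_{\bar z}y^s = -s y^s$.

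For \ref{laplace}, I apply $R_{k-2}$ to the identity of \ref{lowering}. The key auxiliary identity is $R_{k-2}((z-\bar z)h) = (z-\bar z)R_k h$, which holds because $R_{-2}(z-\bar z) = 0$ and \eqref{ref:generalized-Leibniz}. Combined with antiholomorphy of $\overline{\Omega_{W,1}}$ (so that $R_k(\overline{\Omega_{W,1}}h) = \overline{\Omega_{W,1}}R_k h$), this converts the first term in \ref{lowering} into $(z-\bar z)\overline{\Omega_{W,1}}R_k D^{**}_\a(z,s,k)$, which \ref{raising} then expands. The second term, $(k/2-s)R_{k-2}D^{**}_\a(z,s,k-2)$, is expanded by \ref{raising} with $k$ replaced by $k-2$. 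A short scalar check gives $-(k/2-s)(k/2+s-1) = \tfrac{k}{2}(1-\tfrac{k}{2}) - s(1-s)$, which exactly cancels the contribution $-\tfrac{k}{2}(1-\tfrac{k}{2}) + s(1-s)$ entering from $\Delta_k + s(1-s)$, so only the three cross-terms of \ref{laplace} survive. Finally, \ref{laplace-better} follows from \ref{laplace} by using \ref{raising} and \ref{lowering} to eliminate $D^{**}_\a(z,s,k\pm 2)$ in favour of $R_k D^{**}_\a(z,s,k)$ and $L_k D^{**}_\a(z,s,k)$; the three copies of $(z-\bar z)^2\Omega_{V,1}\overline{\Omega_{W,1}}D^{**}_\a(z,s,k)$ that appear sum with signs $-1,+1,+1$, producing the $+1$ coefficient of \ref{laplace-better}.

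The main obstacle is the bookkeeping in the reconciliation step for \ref{raising}: combining the cocycle identity, the weight-$2$ automorphy of $\Omega_{V,1}$ (which holds for $\g \in \G$ but not for $\sigma_\a$ itself), and the definition $j_\delta = j/|j|$, so that the a priori messy factor $j_{\sigma_\a^{-1}\g}^{-(k+2)}(z)\, j(\sigma_\a,\sigma_\a^{-1}\g z)^{-2}\,\Omega_{V,1}(\g z)\, \Im(\sigma_\a^{-1}\g z)^{s+1}$ collapses to $(z-\bar z)\Omega_{V,1}(z)$ times the $\g$-th summand of $D^{**}_\a(z,s,k)$. Everything else is either a mechanical computation or a purely algebraic rearrangement.
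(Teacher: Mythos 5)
Your proposal is correct and follows essentially the same route as the paper's proof: parts \ref{raising} and \ref{lowering} by term-by-term differentiation using the Leibniz rule, the cocycle identity, and the differential equation of Proposition \ref{prop:symbols-properties}\ref{derivative}; part \ref{laplace} by applying $R_{k-2}$ to \ref{lowering}, expanding via \ref{raising}, and the scalar cancellation $(k/2-s)((k-2)/2+s)=\lambda(s)-\lambda(k/2)$; and part \ref{laplace-better} by back-substituting \ref{raising} and \ref{lowering} into \ref{laplace}. The only cosmetic difference is in \ref{laplace}: you split $R_{-2}$ across $(z-\bar z)$ and $\overline{\Omega_{W,1}}$ separately, whereas the paper uses $R_{-2}\bigl((z-\bar z)\overline{\Omega_{W,1}}\bigr)=0$ directly from \eqref{ref:kill}; both are valid and lead to the same conclusion.
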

\begin{proof}
We first use the generalized Leibniz rule \eqref{ref:generalized-Leibniz} and Proposition \ref{prop:symbols-properties},  \ref{derivative} to find
\begin{align}
    R_k(J_\a^{\sigma_{a}z}\overline{J_\a^{\sigma_{a}z}}y^s)&=    R_0(J_\a^{\sigma_{a}z}\overline{J_\a^{\sigma_{a}z}})y^s+    J_\a^{\sigma_{a}z}\overline{J_\a^{\sigma_{a}z}}R_ky^s\\
    &=R_0(J_\a^{\sigma_{a}z})\overline{J_\a^{\sigma_{a}z}}y^s +J_\a^{\sigma_{a}z}\overline{J_\a^{\sigma_{a}z}}(k/2+s)y^s\\
        &=(z-\overline z)\frac{\Omega_{V,1}(\sigma_\a z)}{j(\sigma_\a,z)^2 }(J_\a^{\sigma_{a}z})\overline{J_\a^{\sigma_{a}z}}y^s +J_\a^{\sigma_{a}z}\overline{J_\a^{\sigma_{a}z}}(k/2+s)y^s\\
\end{align}
Let $h(z)=(z-\overline z)\frac{\Omega_{V,1}(\sigma_\a z)}{j(\sigma_\a,z)^2 }$. Using the series-representation \eqref{D-total} and \eqref{eq:RT-commutator} we see that 
\begin{align}
    R_kD^{**}_\a(z,s,k)=\sum_{\g\in \G_\a\backslash \Gamma}&T_{k+2, \sigma_{\a}^{-1}\g} (h(z)J_\a^{\sigma_\a z}(\Omega_V)\overline{J_\a^{\sigma_\a z}(\Omega_W)}y^s)\\ &+(k/2+s)D^{**}_\a(z,s,k)
\end{align}
Using that $h(\sigma_\a^{-1}\g,z)=j_{\sigma_a^{-1}\g}(z)^2(z-\overline{z})\Omega_{V,1}(z)$ proves \ref{raising}.

The claim in \ref{lowering} is proved analogously. 

To see \ref{laplace} we write, for $s\in\C$,  $\lambda(s)=s(1-s)$ and see that 
\begin{align}
(\Delta_k+\lambda(s))&D^{**}_\a(z,s,k)\\ &=-R_{k-2}L_{k}D^{**}_\a(z,s,k)+(\lambda(s)-\lambda(k/2))D^{**}_\a(z,s,k)
\end{align}
We note, by \eqref{ref:kill}, that $R_{-2}((z-\overline z)\overline{\Omega_{W,1}})=0$. Using \ref{raising} and \ref{lowering} together with this we find that 
\begin{align}
R&_{k-2}L_{k}D^{**}_\a(z,s,k)\\
=&R_{k-2}((z-\overline z)\overline{\Omega_{W,1}}D^{**}_\a(z,s,k)+(k/2-s)D^{**}_\a(z,s,k-2))\\
=&R_{-2}((z-\overline z)\overline{\Omega_{W,1}})D^{**}_\a(z,s,k)+(z-\overline z)\overline{\Omega_{W,1}}R_kD^{**}_\a(z,s,k)\\
&+(k/2-s)((z-\overline z)\Omega_{V,1}D^{**}_\a(z,s,k-2)+((k-2)/2+s)D^{**}_\a(z,s,k))\\
=&(z-\overline z)\overline{\Omega_{W,1}}((z-\overline z)\Omega_{V,1}D^{**}_\a(z,s,k)+(k/2+s)D^{**}_\a(z,s,k+2))\\
&+(k/2-s)((z-\overline z)\Omega_{V,1}D^{**}_\a(z,s,k-2)+((k-2)/2+s)D^{**}_\a(z,s,k)).
\end{align}
A straightforward computation verifies that \begin{equation}(k/2-s)((k-2)/2+s)-(\lambda(s)-\lambda(k/2))=0,\end{equation} and using this and that $\overline{\Omega_{W,1}}$ and $\Omega_{V,1}$ commute,  we arrive at \ref{laplace}.

We prove \ref{laplace-better} by isolating the last term in \ref{raising} and \ref{lowering} and inserting them in \ref{laplace}.
\end{proof}

Proposition \ref{prop:LaplaceD*} succinctly expresses how the Laplacian and the raising and lowering operators act on all the  coefficients of the total twisted Eisenstein series  $D^{**}_\a{(z,s,k,\Omega_V, \Omega_W)}$ at the same time. For later use it is also useful to spell out how they act on the coefficients directly. 

We define an operator $\delta:V^*\backslash\{\e\}\to V^*$ (and similarly on $W$) which deletes the first letter, i.e. if $v=v_1v_2\ldots v_n$, $v_i\in V$, is a word of length $l(v):=n>0$, then $\delta v$ is the word of length $l(\delta v)=n-1$ given by 
\begin{equation} 
    \delta v=v_2\ldots v_n\in V^*.
\end{equation}
\begin{cor}\label{cor: laplaceDcoeff}For $v\in V^*$, $w\in W^*$  we have 
\begin{enumerate}[label=(\roman*)]
    \item \label{item:laplace-one}
$\begin{aligned}[t]
    (\Delta_k+s(1-s))&D^{vw}_\a(z,s,k)=-(z-\overline z)^2f_{v_1}\overline{f_{w_1}}D^{\delta v \delta w}_\a(z,s,k)\\ 
        & -(k/2+s)(z-\overline z)\overline{f_{w_1}}D^{v\delta w}_\a(z,s,k+2)\\ &-(k/2-s)(z-\overline z)f_{v_1}D^{\delta v w}_\a(z,s,k-2),
\end{aligned}$
\item\label{item:laplace-two}
$\begin{aligned}[t]
    (\Delta_k+s(1-s))&D^{vw}_\a(z,s,k)
        =(z-\overline z)^2f_{v_1}\overline{f_{w_1}}D^{\delta v \delta w}_\a(z,s,k)\\ 
        & -(z-\overline z)\overline{f_{w_1}}R_kD^{v\delta w}_\a(z,s,k)\\ &-(z-\overline z)f_{v_1}L_kD^{\delta v w}_\a(z,s,k).
\end{aligned}$
\end{enumerate}
\end{cor}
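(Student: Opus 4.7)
The plan is to deduce both formulas by extracting the coefficient of $X_v Y_w$ in the two parts of Proposition \ref{prop:LaplaceD*} that involve the total series $D^{**}_\a(z,s,k)$. Concretely, the expansion
\begin{equation}
D^{**}_\a(z,s,k) = \sum_{(v,w) \in V^* \times W^*} D^{vw}_\a(z,s,k)\, X_v Y_w
\end{equation}
means that the differential operators $R_k, L_k, \Delta_k$, which are scalar operators on the functional side and act trivially on the noncommuting variables $X_v, Y_w$, commute with the coefficient-extraction map $[X_v Y_w]$. So the first step is to observe that extraction commutes with the operators appearing in \ref{raising}--\ref{laplace-better} of Proposition \ref{prop:LaplaceD*}.

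The main point is then to identify the coefficient of $X_v Y_w$ in products $\Omega_{V,1} \cdot D^{**}_\a$ and $\overline{\Omega_{W,1}} \cdot D^{**}_\a$. Since $\Omega_{V,1} = \sum_{\omega \in V} f_\omega X_\omega$ is linear in single-letter $X$-monomials, the concatenation convention $X_{v}X_{v'} = X_{vv'}$ forces
\begin{equation}
[X_v Y_w]\bigl(\Omega_{V,1}\cdot D^{**}_\a\bigr) = f_{v_1}\, D^{\delta v,\, w}_\a(z,s,k),
\end{equation}
and similarly $[X_v Y_w](\overline{\Omega_{W,1}}\cdot D^{**}_\a) = \overline{f_{w_1}}\, D^{v,\,\delta w}_\a(z,s,k)$, where we tacitly interpret such a term as $0$ when the relevant word is empty (which is consistent with $\Omega_{V,1}$ having no constant term in the $X$-variables). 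The same identity applied twice handles the double product $\Omega_{V,1}\overline{\Omega_{W,1}} \cdot D^{**}_\a$ and yields the factor $f_{v_1}\overline{f_{w_1}}D^{\delta v, \delta w}_\a$.

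Given these observations, part \ref{item:laplace-one} follows by taking $[X_v Y_w]$ of Proposition \ref{prop:LaplaceD*}\ref{laplace}, matching term by term; part \ref{item:laplace-two} is the same recipe applied to Proposition \ref{prop:LaplaceD*}\ref{laplace-better}, where the additional point is that $R_k$ and $L_k$ pass through the coefficient extraction (they act on the functional coefficients, not on $X_v, Y_w$), producing $R_k D^{v,\delta w}_\a$ and $L_k D^{\delta v, w}_\a$. There is no real obstacle here: the whole content is bookkeeping about noncommutative generating series, and the only subtlety worth flagging explicitly is the convention that a term with $f_{v_1}$ (resp.\ $\overline{f_{w_1}}$) is absent when $v$ (resp.\ $w$) is the empty word, which is exactly what the coefficient extraction produces.
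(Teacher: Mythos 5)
Your proof is correct and is exactly the argument the paper intends (the paper gives no explicit proof, as the corollary is a direct coefficient extraction from Proposition \ref{prop:LaplaceD*}\ref{laplace} and \ref{laplace-better}). Your identification $[X_v Y_w](\Omega_{V,1}D^{**}_\a)=f_{v_1}D^{\delta v,w}_\a$ via left-concatenation, and the observation that $\Delta_k,R_k,L_k$ commute with coefficient extraction, are precisely the bookkeeping points that matter, and your handling of the empty-word convention matches the remark following the corollary.
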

Here it is understood that if we write $\delta \e$ in any of the indices the corresponding term is absent.

\subsection{Meromorphic continuation of
the automorphic twisted Eisenstein series} 
\label{sec:continuation-etc}

In this section we first briefly sketch a proof of Theorem \ref{thm:analytic} below which establishes some basic but important properties of $D^{vw}_\a(z,s,k)$. The claim about meromorphic continuation 
is proved also in \cite{ChintaHorozovOSullivan:2019}, but our proof is new. We then go on to find the polar structure at $s=1$ of the meromorphic continuation. 

\begin{thm}\label{thm:analytic} For $(v,w)\in V^*\times W^*$ the function $D^{vw}_\a(z,s,k)$ admits meromorphic continuation to $\Re(s)>1/2$ with all poles $s_0$ satisfying that $s_0(1-s_0)$ is in the spectrum of  $-\Delta_k$. Furthermore $D^{vw}_\a(z,s,k)$ grows at most polynomially in $s$ in any vertical strip uniformly for $z$ in any compact set. For $(v,w)\neq (\e,\e)$ the function $D^{vw}_\a(z,s,k)$ is in $L^2(\G,k)$ away from its poles. 
\end{thm}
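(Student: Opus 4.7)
The plan is to establish all four assertions simultaneously by strong induction on the total word length $N=\ell(v)+\ell(w)$, carried out uniformly over all cusps $\a$ and all even integers $k$. The base case $N=0$ is $(v,w)=(\e,\e)$, for which $D^{\e\e}_\a(z,s,k)=E_\a(z,s,k)$, and every claim reduces to classical Maass--Selberg theory as recalled in Section \ref{sec:maass-background}.

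For the inductive step, fix $(v,w)$ with $N\geq 1$ and apply Corollary \ref{cor: laplaceDcoeff} item \ref{item:laplace-two}, which is purposely designed to express $(\Delta_k+s(1-s))D^{vw}_\a(z,s,k)$ entirely in terms of weight-$k$ objects of strictly smaller total word length, namely $D^{\delta v\delta w}_\a(z,s,k)$, $R_kD^{v\delta w}_\a(z,s,k)$ and $L_kD^{\delta v w}_\a(z,s,k)$, each multiplied by one or two cusp forms $f_{v_1}$, $\overline{f_{w_1}}$ and a power of $(z-\bar z)$. Denote this right-hand side by $F^{vw}_\a(z,s,k)$. By the induction hypothesis, the three underlying $D$-factors are meromorphic on $\Re(s)>1/2$ with pole set contained in $\{s_0:s_0(1-s_0)\in\sigma(-\Delta_k)\}$, grow polynomially in $s$ on vertical strips, and grow at most polynomially at each cusp; applying the first-order differential operators $R_k$, $L_k$ enlarges neither the pole set nor the cusp growth, and only increases the polynomial $s$-degree by a bounded amount.

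The crucial observation is that multiplication by $f_{v_1}$ or $\overline{f_{w_1}}$ enforces exponential decay at every cusp of $\G$, which absorbs both the $(z-\bar z)$ factors and any polynomial growth of the inductive $D$-factors; consequently $F^{vw}_\a(\cdot,s,k)\in L^2(\G,k)$ away from the inductively produced poles. We may therefore apply the weight-$k$ resolvent and obtain
\begin{equation}
D^{vw}_\a(z,s,k)=-R(s,k)F^{vw}_\a(z,s,k),
\end{equation}
an identity in $L^2(\G,k)$ valid for $\Re(s)>1$ by \eqref{square-integrable} and extended meromorphically on $\Re(s)>1/2$ because $R(s,k)$ is meromorphic there with poles precisely where $s(1-s)\in\sigma(-\Delta_k)$. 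This places the poles of $D^{vw}_\a(z,s,k)$ inside the required set, produces $L^2$-membership automatically for $(v,w)\neq(\e,\e)$, and, combined with the standard bound $\|R(s,k)\|\leq\operatorname{dist}(-s(1-s),\sigma(-\Delta_k))^{-1}$, gives polynomial $L^2$-growth in vertical strips.

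The main obstacle is the upgrade from $L^2$- to pointwise polynomial growth in $s$, uniformly on compact subsets of $z$. This is settled by elliptic regularity for $\Delta_k$: bootstrapping the equation $(\Delta_k+s(1-s))D^{vw}_\a=F^{vw}_\a$ together with $L^2$-bounds on both sides yields $H^m$-control for every $m$, with constants polynomial in $|\Im s|$, and Sobolev embedding then converts this to the desired pointwise bound. One must track this regularity step carefully through the iteration so as not to accumulate excess factors of $|\Im s|$, but the procedure is standard and the induction closes.
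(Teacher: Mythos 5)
Your argument takes essentially the same route as the paper's sketch: induction on $\ell(v)+\ell(w)$, inverting $(\Delta_k+s(1-s))$ via the weight-$k$ resolvent, then upgrading $L^2$-control to pointwise polynomial growth by elliptic regularity and Sobolev embedding. The one tangible difference is which form of Corollary \ref{cor: laplaceDcoeff} you feed into the resolvent. You use item \ref{item:laplace-two}, whose right-hand side keeps the weight $k$ fixed but applies $R_k$ and $L_k$ to the shorter-word inductive $D$'s; the paper uses item \ref{item:laplace-one}, whose right-hand side shifts the weight to $k\pm 2$ and involves no derivatives of the inductive objects at all. The paper's choice is the tidier one here: since the induction runs only over $\ell(v)+\ell(w)$ (implicitly uniform in all even $k$), item \ref{item:laplace-one} makes the source term manifestly in $L^2(\G,k)$ and polynomially bounded directly from the inductive hypothesis, with the cusp forms supplying the exponential decay. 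Your choice forces you to control $R_kD^{v\delta w}_\a$ and $L_kD^{\delta v w}_\a$, and neither $L^2$-membership nor polynomial growth of these derivatives follows from the theorem's stated conclusion for shorter words. You paper over this with the assertion that $R_k,L_k$ ``enlarge neither the pole set nor the cusp growth, and only increase the polynomial $s$-degree by a bounded amount''; making that honest requires observing that the resolvent maps into the domain of $\Delta_k$ (so the inductive $D$'s are in $H^2$, hence $R_kD, L_kD\in L^2$ after multiplication by a cusp form) and carrying $s$-polynomial Sobolev bounds through the induction as a strengthened hypothesis. You do flag this (``one must track this regularity step carefully''), and the bootstrap you sketch does close, so your proof works. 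It simply does more bookkeeping than the weight-shifting version, which is precisely why the paper provides item \ref{item:laplace-one} alongside item \ref{item:laplace-two}.
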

\begin{proof}[Sketch of proof]The proof is an induction in $l(v)+l(w)=q$. For $q=0$ we have $D^{vw}_\a(z,s,k)=E_\a(z,s,k)$ and all the statements are well-known. For the polynomial growth we refer to \cite[Lemma 3.1]{PetridisRisager:2004a} for $k=0$. The $k\neq 0$ case is proved analogously.

For a general $q$ we apply the resolvent to Corollary \ref{cor: laplaceDcoeff} \ref{item:laplace-one} and get 
\begin{align}
\label{eq:Resolvent-D}
\begin{split}
    D^{vw}_\a(z,s,k)=&R(s,k)\left[(z-\overline z)^2f_{v_1}\overline{f_{w_1}}D^{\delta v \delta w}_\a(z,s,k)\right.\\ 
        & +(k/2+s)(z-\overline z)\overline{f_{w_1}}D^{v\delta w}_\a(z,s,k+2)\\ &+\left.(k/2-s)(z-\overline z)f_{v_1}D^{\delta v w}_\a(z,s,k-2)\right],
\end{split}
\end{align}
Note that since $f_{v_1},f_{w_1}$ are cuspidal the right-hand side of Corollary \ref{cor: laplaceDcoeff} \ref{item:laplace-one} is in $L^2(\G,k)$ and we saw in \eqref{square-integrable} that $D^{vw}_\a(z,s,k)$ is in $L^2(\G,k)$ when $\Re(s)>1$. Now the right-hand side of \eqref{eq:Resolvent-D} is meromorphic in $\Re(s)>1/2$ by induction and the properties of the resolvent, and the possible poles are as claimed. The claim about growth in vertical lines is proved as in  \cite[Lemma 3.2]{PetridisRisager:2004a} using $L^2$-estimates and the Sobolev embedding theorem. We omit the details. 
\end{proof}
In order to understand the average growth of the noncommutative modular symbols we need better understanding of the pole $D^{vw}_\a(z,s,k)$. Recall that $D^{\e\e}_\a(z,s,k)=E_\a(z,s,k)$ has a pole at $s=1$ only if $k=0$, and in this case the pole is simple with residue $\vol\GmodH^{-1}$. In order to understand the general automorphic twisted Eisenstein series we need the following result:

\begin{prop}\label{prop:term-vanishes}Let $f\in S_2(\G)$ and $D\in L^2(\G,0)$. Then 
\begin{equation}
    \inprod{(z-\overline z)\overline f R_0D }{1}=\inprod{(z-\overline z)f L_0 D }{1}=0.
\end{equation}
Furthermore
\begin{equation}
    \inprod{(z-\overline z)\overline f R_0E(z,s,0) }{1}=\inprod{(z-\overline z)f L_0 E(z,s,0) }{1}=0.
\end{equation}
\end{prop}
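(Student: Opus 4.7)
The plan is to reduce the proposition to the identities $L_2(yf)=0$ and $R_{-2}(y\bar f)=0$, which come from \eqref{ref:kill} applied to the weight $2$ cusp form $f$ (so that $g(z)\coloneqq y\,f(z)$ is automorphic of weight $2$, killed by $L_2$, while $\bar g = y\bar f$ is automorphic of weight $-2$, killed by $R_{-2}$). The key observation is that
\begin{equation}
(z-\bar z)f = 2iy\,f = 2ig, \qquad (z-\bar z)\bar f = 2iy\,\bar f = 2i\bar g,
\end{equation}
so $(z-\bar z)f$ is a smooth weight $2$ function annihilated by $L_2$, and $(z-\bar z)\bar f$ is a smooth weight $-2$ function annihilated by $R_{-2}$.

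Next I would use the adjointness \eqref{ref:adjoint} to move the raising/lowering operators off of $D$ and onto these cuspidal weight $\pm2$ functions. Using $\overline{(z-\bar z)f}=-(z-\bar z)\bar f$, one computes
\begin{equation}
\langle (z-\bar z)\bar f\, R_0 D,1\rangle = \langle R_0 D,\,-(z-\bar z)f\rangle = \langle D,\,L_2(-(z-\bar z)f)\rangle = -2i\langle D,L_2 g\rangle = 0,
\end{equation}
by the first identity in \eqref{ref:kill}. Similarly, specializing \eqref{ref:adjoint} to $k=-2$, $g_{-2}=-(z-\bar z)\bar f$, $h_0=D$ gives
\begin{equation}
\langle -(z-\bar z)\bar f,L_0 D\rangle = \langle R_{-2}(-(z-\bar z)\bar f),D\rangle = -2i\langle R_{-2}\bar g,D\rangle = 0,
\end{equation}
and conjugating yields $\langle (z-\bar z)f\,L_0 D,1\rangle = \langle L_0 D,-(z-\bar z)\bar f\rangle = 0$. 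This handles the $D\in L^2(\Gamma,0)$ case.

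For the Eisenstein series statement, the same calculation works once we justify the application of \eqref{ref:adjoint} despite $E(z,s,0)\notin L^2$. This is where cuspidality of $f$ is crucial: $R_0 E(z,s,0)$ and $L_0 E(z,s,0)$ are (up to factors) $E(z,s,\pm 2)$ by \eqref{ref:Eisenstein-properties}, and hence grow at most polynomially in $y$ at every cusp, whereas the weighting factor $(z-\bar z)f$ or $(z-\bar z)\bar f$ decays like $y\,e^{-2\pi y}$ at each cusp. Thus all relevant integrands are of rapid decay, the Green/Stokes-type integration-by-parts underlying \eqref{ref:adjoint} produces no boundary contributions at the cusps, and the proof goes through verbatim. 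The only delicate point (and arguably the main obstacle) is precisely this convergence/boundary check for the Eisenstein case; once it is dispatched, the identities $L_2 g=0$ and $R_{-2}\bar g=0$ close the argument immediately.
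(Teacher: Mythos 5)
Your argument is correct and is essentially the proof given in the paper: both move the raising/lowering operator off $D$ onto the cuspidal factor via the adjointness relation \eqref{ref:adjoint}, then invoke $L_2(yf)=0$ and $R_{-2}(y\bar f)=0$ from \eqref{ref:kill}, and both justify the Eisenstein case by the rapid decay of the cusp form ensuring convergence of the integration by parts. The only cosmetic difference is that you phrase the second identity by moving $R_{-2}$ onto $(z-\bar z)\bar f$ and then conjugating, while the paper simply says it is ``proved analogously.''
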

\begin{proof}
We have 
\begin{equation}
    \inprod{(z-\overline z)\overline f R_0D }{1}=\inprod{D }{L_2(\overline z- z)f }=0
\end{equation}
where we have used \eqref{ref:adjoint} and \eqref{ref:kill}. This proves that the first expression in the proposition equals zero. That the second is also zero is proved analogously. 

Proving that the same thing holds for $D=E(z,s,0)$ follows from observing that since the decay of the cusp form $f$ makes the underlying integral convergent the integration by parts leading to \eqref{ref:adjoint} is still valid in this case. 
\end{proof}

\begin{prop}\label{prop:unmixed-regular} Let $\e\neq v\in V^*$. Then $D^{v\e}(z,s,0)$ and $D^{v\e}(z,s,0)$ are regular at $s=1$.
\end{prop}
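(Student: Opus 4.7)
The plan is to induct on $l(v)\ge 1$ using the resolvent identity \eqref{eq:Resolvent-D}. With $w=\e$ and $k=0$, both terms involving $\overline{f_{w_1}}$ drop out, so
\begin{equation*}
    D^{v\e}(z,s,0) = R(s,0)\bigl[-s(z-\overline z)f_{v_1}D^{\delta v\e}(z,s,-2)\bigr].
\end{equation*}
Recall that $R(s,0)=\frac{P_0}{s-1}+R_0(s,0)$, with $P_0$ the projection onto constants. A key auxiliary identity comes from extracting the $Y_\e$-coefficient of Proposition \ref{prop:LaplaceD*}(ii): since $\overline{\Omega_{W,1}}$ carries $Y$-degree one, the first term on the right vanishes, yielding
\begin{equation*}
    L_0 D^{v\e}(z,s,0) = -s\, D^{v\e}(z,s,-2).
\end{equation*}
Consequently, if $D^{v\e}(z,s,0)$ is regular at $s=1$ then so is $D^{v\e}(z,s,-2)$, and one has the pointwise identity $D^{v\e}(z,1,-2)=-L_0 D^{v\e}(z,1,0)$.

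For the base case $l(v)=1$, $\delta v=\e$, so the bracket is $-s(z-\overline z)f_{v_1}E_\a(z,s,-2)$, which is regular at $s=1$ since Eisenstein series of nonzero weight have no pole there. The residue at $s=1$ of $D^{v\e}(z,s,0)$ therefore reduces to
\begin{equation*}
    -\frac{1}{\vol{\GmodH}}\inprod{(z-\overline z)f_{v_1}E_\a(z,1,-2)}{1}.
\end{equation*}
The meromorphic identity $L_0 E(z,s,0)=-sE(z,s,-2)$ is valid at $s=1$ because $L_0$ annihilates the constant pole of $E(z,s,0)$; combined with the second equality in Proposition \ref{prop:term-vanishes}, this forces the inner product to vanish.

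For the inductive step $l(v)\ge 2$, assume the proposition for all shorter words. By the auxiliary relation applied to $\delta v$, the factor $D^{\delta v\e}(z,s,-2)$ is regular at $s=1$, so the residue of $D^{v\e}(z,s,0)$ equals $-\vol{\GmodH}^{-1}\inprod{(z-\overline z)f_{v_1}D^{\delta v\e}(z,1,-2)}{1}$. Substituting $D^{\delta v\e}(z,1,-2)=-L_0 D^{\delta v\e}(z,1,0)$ and using that $\delta v\neq\e$ forces $D^{\delta v\e}(z,1,0)\in L^2(\G,0)$ via Theorem \ref{thm:analytic}, the first equality in Proposition \ref{prop:term-vanishes} gives the vanishing. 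The companion statement about $D^{\e v}(z,s,0)$ (the apparent typo in the proposition) follows by the mirror argument: extract $X_\e$-coefficients, replace $L_k$ and $f_{v_1}$ by $R_k$ and $\overline{f_{v_1}}$, and invoke the $R_0$-versions of Propositions \ref{prop:LaplaceD*} and \ref{prop:term-vanishes}. The main subtlety throughout is ensuring the algebraic identities between weight-$0$ and weight-$\pm 2$ quantities survive meromorphic continuation to $s=1$, which works because the raising and lowering operators annihilate the constant residue of $E(z,s,0)$ there.
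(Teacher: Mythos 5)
Your proof is correct and follows essentially the same strategy as the paper: an induction on $l(v)$ in which the residue of $D^{v\e}(z,s,0)$ at $s=1$ is expressed as an inner product of the form $\inprod{(z-\overline z)f_{v_1}L_0D^{\delta v\e}_\a(z,s,0)}{1}$ (up to sign) and then killed by Proposition~\ref{prop:term-vanishes}. The only cosmetic differences are that you route through Corollary~\ref{cor: laplaceDcoeff}\ref{item:laplace-one} together with the auxiliary identity $L_0D^{v\e}(z,s,0)=-sD^{v\e}(z,s,-2)$, whereas the paper uses Corollary~\ref{cor: laplaceDcoeff}\ref{item:laplace-two} directly to obtain \eqref{useful-expression} in one step; and for the $D^{\e v}$ statement you run a mirror argument with $R_0$ in place of $L_0$, while the paper simply invokes the conjugation symmetry $\overline{D^{v\e}(z,\overline s,-k)}=D^{\e v}(z,s,k)$.
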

\begin{proof}
Note that $\overline{D^{v\e}(z,\overline s,-k)}=D^{\e v}(z, s,k)$ so it suffices to prove that $D^{v\e}(z,s,0)$ is regular at $s=1$ which we do by induction in $l(v)=l$. By using Corollary \ref{cor: laplaceDcoeff} \ref{item:laplace-two} we have 
\begin{equation}\label{useful-expression}
D^{v\e}(z,s,0)=R(s,0) ((z-\overline z)f_{v_1}L_0D^{\delta v \e }_\a(z,s,0)).
\end{equation}
For $l=0$ we note that since $L_0$ differentiates it annihilates the singular part of $D^{\delta v \e }_\a(z,s,0)=E_\a(z,s,0)$, so $L_0D^{\delta v \e }_\a(z,s,0)$ is regular (alternatively use \eqref{ref:Eisenstein-properties}). By \eqref{eq:resolvent-splitting} it follows that the potential pole of $D^{v\e}(z,s,0)$ at $s=1$ can be at most simple, but Proposition \ref{prop:term-vanishes} shows that the singular part vanishes identically which proves the claim for $l=1$. 

The general case again uses \eqref{useful-expression}. By induction  $(z-\overline z)f_{v_1}L_0D^{\delta v \e }_\a(z,s,0)$ is regular at $s=1$. By \eqref{eq:resolvent-splitting} the singular part of $D^{v\e}(z,s,0)$ equals that of the expression $\inprod{(z-\overline z)f_{v_1}L_0D^{\delta v \e }_\a(z,s,0)}{1}\vol\GmodH^{-1}(s-1)^{-1}$ which vanishes by Proposition \ref{prop:term-vanishes}.
\end{proof}

\begin{thm}\label{thm:pole-orders}Let $(v,w)\in V^*\times W^*$ with $l(v)+l(w)=q$.
\begin{enumerate}
    \item  If $q$ is even then $D^{vw}_\a(z,s,0)$ has a pole at $s=1$ of order at most $q/2+1$, and this pole order is attained only if $l(v)=l(w)$. In this case corresponding coefficient equals 
\begin{equation}
 \frac{4^{l(v)}}{\vol\GmodH^{l(v)+1}}  \prod_{i=1}^{l(v)}\inprod{yf_{v_i}}{yf_{w_i}}.
\end{equation}

\item If $q$ is odd then $D^{vw}_\a(z,s,0)$ has a pole at $s=1$ of order at most $(q-1)/2$ 
\end{enumerate}
\end{thm}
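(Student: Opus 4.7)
The plan is strong induction on $q=\ell(v)+\ell(w)$, using the Laplace recurrence in the form of Corollary \ref{cor: laplaceDcoeff} \ref{item:laplace-two} (rather than \ref{item:laplace-one}) together with the resolvent splitting \eqref{eq:resolvent-splitting}. Inverting the equation via $R(s,0)=-(\Delta_0+s(1-s))^{-1}$ yields
\begin{equation}
D^{vw}_\a(z,s,0) = R(s,0)\Bigl[-(z-\overline z)^2 f_{v_1}\overline{f_{w_1}}D^{\delta v\delta w}_\a + (z-\overline z)\overline{f_{w_1}}R_0D^{v\delta w}_\a + (z-\overline z)f_{v_1}L_0D^{\delta v w}_\a\Bigr].
\end{equation}
The base cases are $q=0$ (where $D^{\e\e}_\a=E_\a(z,s,0)$ has residue $\vol\GmodH^{-1}$, matching $4^0/\vol\GmodH^{0+1}$) and $q=1$ (Proposition \ref{prop:unmixed-regular}).

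The key mechanism is the interplay between the two summands of $R(s,0)=P_0/(s-1)+R_0(s,0)$ and Proposition \ref{prop:term-vanishes}. The proposition says precisely that the two ``side'' terms pair to zero against the constant function $1$, so the singular part $P_0/(s-1)$ contributes nothing to them, and their pole orders are controlled by $R_0(s,0)$ alone; in particular they do not grow under the application of $R(s,0)$. The main term, by contrast, is acted on non-trivially by $P_0/(s-1)$: writing $-(z-\overline z)^2=4y^2$ and using $\inprod{y^2f_{v_1}\overline{f_{w_1}}}{1}=\inprod{yf_{v_1}}{yf_{w_1}}$, one computes
\begin{equation}
P_0\bigl[-(z-\overline z)^2 f_{v_1}\overline{f_{w_1}}\bigr] = \frac{4\inprod{yf_{v_1}}{yf_{w_1}}}{\vol\GmodH},
\end{equation}
and this single Petersson factor, accumulated at each inductive step, is what produces the product $\prod_{i}\inprod{yf_{v_i}}{yf_{w_i}}$ in the claimed leading coefficient.

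I would strengthen the induction hypothesis to record that in the balanced even-length case $\ell(v)=\ell(w)$ the leading pole coefficient is a constant in $z$; this is automatic since $P_0$ projects to constants. With this strengthening the bookkeeping is clean. For even $q$ with $\ell(v)=\ell(w)$, the first term feeds in the balanced $D^{\delta v\delta w}$ and, after the $P_0/(s-1)$ step, produces the pole of order $q/2+1$ with the stated leading, while the side terms have odd length $q-1$ and are strictly dominated. For even $q$ with $\ell(v)\neq\ell(w)$, the unbalancedness propagates to $\delta v,\delta w$ and the hypothesis gives pole order strictly less than $q/2+1$, establishing the ``only if'' clause. For odd $q$ the side $D^{v\delta w}$ has even length $q-1$: if balanced, its leading is constant by the strengthened hypothesis so $R_0$ annihilates it and the order drops to $\leq(q-1)/2$; if unbalanced the hypothesis directly gives order $\leq(q-1)/2$. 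The first term similarly contributes $\leq(q-1)/2$, and the third term is handled symmetrically via $L_0$.

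The principal obstacle will be that Proposition \ref{prop:term-vanishes} is stated for $D\in L^2(\G,0)$, whereas one must apply it to the meromorphic continuations of $D^{v\delta w}_\a$ and $D^{\delta v w}_\a$ near $s=1$. I would handle this as in the Eisenstein case of that proposition: the proof uses only that the boundary terms in the integration by parts \eqref{ref:adjoint} vanish, and the cuspidal decay of $f_{v_1},f_{w_1}$ ensures this uniformly in $s$ by the polynomial growth in vertical strips from Theorem \ref{thm:analytic}. Combined with $L_2((z-\overline z)f)=0$ from \eqref{ref:kill}, the side-term inner products vanish meromorphically in $s$, and the induction closes.
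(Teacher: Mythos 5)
Your proof reproduces the paper's argument: strong induction on $q$ via Corollary \ref{cor: laplaceDcoeff} \ref{item:laplace-two} and the resolvent splitting \eqref{eq:resolvent-splitting}, with Proposition \ref{prop:term-vanishes} killing the $P_0$-contribution of the two side terms, and the constancy of the leading Laurent coefficient in the balanced case (already part of the statement being proved) used exactly as you propose to make $R_0$ drop the order in the odd-$q$ step. Your final remark, that Proposition \ref{prop:term-vanishes} must be applied along the meromorphic continuation of $D^{v\delta w}_\a$ rather than only on the region of convergence, spells out a point the paper handles tacitly; otherwise the two proofs coincide.
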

\begin{proof} For $q=0$ this follows from the known properties of  $E_\a(z,s,0)$ (See section \ref{sec:maass-background}). For $q=1$ this follows from Proposition \ref{prop:unmixed-regular}. 

To run an inductive argument we assume that the claim has been proved up to $q$ odd. Assume that $l(v)+l(w)=q+1$ which is now even. By Corollary \ref{cor: laplaceDcoeff} \ref{item:laplace-two} we have
\begin{align}
    D^{vw}_\a(z,s,0)
        =-&R(s,0)\left[(z-\overline z)^2f_{v_1}\overline{f_{w_1}}D^{\delta v \delta w}_\a(z,s,0)\right.\\ 
\label{eq:recursion-k0}        & -(z-\overline z)\overline{f_{w_1}}R_0D^{v\delta w}_\a(z,s,0) \left.-(z-\overline z)f_{v_1}L_0D^{\delta v w}_\a(z,s,0)\right].
\end{align}
By the induction hypothesis and \eqref{eq:resolvent-splitting} the first term is at most of order $(q-1)/2+1+1$ and this order is attained only if $l(\delta v)=l(\delta w)$ and the corresponding coefficient equals
\begin{equation}
    -\frac{4^{l(\delta v)}}{\vol\GmodH^{l(\delta v)+1}}  \left(\prod_{i=2}^{l(v)}\inprod{yf_{v_i}}{yf_{w_i}}\right)\inprod{(z-\overline z)f_{v_1}}{(z-\overline z)f_{w_1}}
\end{equation}
Using that the last inner product equals $-4\inprod{yf_{v_1}}{yf_{w_1}}$ the coefficient corresponding to $(s-1)^{(q+1)/2+1}$. 

To analyze the second and third term  the induction hypothesis and \eqref{eq:resolvent-splitting} implies that this is of order at most $(q-1)/2+1$. This proves the claim for $l(v)+l(w)=q+1$. 

Assume now that $l(v)+l(w)=q+2$ which is odd, and want to argue that $D^{vw}_\a(z,s,0)$ has a pole at $s=1$ of order at most $((q+2)-1)/2$ We use again \eqref{eq:recursion-k0}, \eqref{eq:resolvent-splitting}, and the induction hypothesis. We note that the first term on the left-hand side has pole order at most $(q-1)/2+1$ agrees with the claim. 
To analyze the second term $R(s,0)(z-\overline z)\overline{f_{w_1}}R_0D^{v\delta w}_\a(z,s,0)$ we note that $D^{v\delta w}_\a(z,s,0)$ has a pole of order at most order $(q+1)/2+1$ but if this is attained the leading coefficient is constant so since $R_0$ differentiates $R_0D^{v\delta w}_\a(z,s,0)$ has a pole of order at most $(q+1)/2$. By \eqref{eq:resolvent-splitting} the resolvent $R(s,0)$ raises this pole order by at most one, but only if 
\begin{equation}
    \inprod{(z-\overline z)\overline{f_{w_1}}R_0D^{v\delta w}_\a(z,s,0)}{1}
\end{equation}
is not identically zero. But Proposition \ref{prop:term-vanishes} implies that this expression is indeed identically zero which proves the claim, and finishes the induction.

\end{proof}

\begin{thm} \label{thm:pole-order-bounds}The function $D^{vw}_\a(z,s,k)$ has a pole at $s=1$ of order at most $\min(l(v),l(w))+1$. If $k\neq 0$ and $l(v)=l(w)$ the pole order is strictly less than $\min(l(v),l(w))+1$. 
\end{thm}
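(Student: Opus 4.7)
The plan is to induct on $q=\ell(v)+\ell(w)$, splitting the argument by the weight $k$. Using the complex-conjugation identity $\overline{D^{vw}_\a(z,\overline s,-k)}=D^{wv}_\a(z,s,k)$ observed in the proof of Proposition \ref{prop:unmixed-regular}, pole orders at $s=1$ are invariant under $(v,w,k)\mapsto(w,v,-k)$, so one may assume $\ell(v)\leq\ell(w)$ and hence $\min(\ell(v),\ell(w))=\ell(v)$. The base case $q=0$ is the known fact that $D^{\e\e}_\a(z,s,k)=E_\a(z,s,k)$ has a pole at $s=1$ of order exactly $1$ if $k=0$ and no pole otherwise, matching both the bound $\min(0,0)+1=1$ and the strict inequality when $k\neq 0$.

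For the inductive step, apply Corollary \ref{cor: laplaceDcoeff} \ref{item:laplace-one} together with the meromorphic continuation of Theorem \ref{thm:analytic} to write $D^{vw}_\a(z,s,k) = -R(s,k)\bigl[B_{v,w,k}(z,s)\bigr]$, where $B_{v,w,k}$ is the sum of three terms involving $D^{\delta v\delta w}_\a(z,s,k)$, $D^{v\delta w}_\a(z,s,k+2)$, and $D^{\delta v w}_\a(z,s,k-2)$, multiplied by bounded prefactors of the form $(z-\overline z)^2 f_{v_1}\overline{f_{w_1}}$ and $(k/2\pm s)(z-\overline z)f_{\bullet}$. Applying the inductive hypothesis to each summand gives that $B_{v,w,k}$ has pole order at $s=1$ at most $\ell(v)+1$ when $\ell(v)<\ell(w)$ (attainable only through the middle summand), and at most $\ell(v)$ when $\ell(v)=\ell(w)$. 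The case $k=0$ is already a consequence of the finer Theorem \ref{thm:pole-orders}, so one may restrict to $k\neq 0$; and whenever $R(s,k)$ is holomorphic at $s=1$---which happens as soon as $0$ is not in the discrete spectrum of $-\Delta_k$---the pole-order bound for $B_{v,w,k}$ transfers directly to $D^{vw}_\a$, yielding both the claimed bound and the strict inequality in the $\ell(v)=\ell(w)$ case.

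The main obstacle is thus the residual case $k=\pm 2$ with $S_2(\G)\neq 0$, for which the eigenvalue $\tfrac{|k|}{2}(1-\tfrac{|k|}{2})=0$ is attained on the nontrivial subspace spanned by $\{yf:f\in S_2(\G)\}$ (respectively $\{y\overline f\}$), so that $R(s,\pm 2)$ carries a simple pole at $s=1$ whose residue is the orthogonal projection onto $\ker(\Delta_{\pm 2})$. To close the induction in this case I would adapt the orthogonality argument of Proposition \ref{prop:term-vanishes}: the identities $L_2(yf)=0$ and $R_{-2}(y\overline f)=0$ from \eqref{ref:kill}, combined with the adjointness \eqref{ref:adjoint}, should allow one to verify that the singular part of $R(s,\pm 2)$ paired against $B_{v,w,\pm 2}$ vanishes identically, so the projection contribution cannot raise the pole order beyond that of $B_{v,w,\pm 2}$ itself. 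With that cancellation in place the induction closes and both assertions of the theorem follow.
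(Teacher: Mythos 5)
Your plan diverges from the paper's in one crucial respect: you apply the resolvent $R(s,k)$ for every weight $k$, whereas the paper uses the resolvent only at $k=0$ and then handles $k\neq0$ by the purely algebraic raising/lowering recursion from Proposition~\ref{prop:LaplaceD*}~\ref{raising}, namely
\begin{equation}
D_\a^{v,w}(z,s,k'+2)=\frac{1}{k'/2+s}\left(R_{k'}D_\a^{vw}(z,s,k')-(z-\overline z)f_{v_1}D_\a^{\delta v,w}(z,s,k')\right),
\end{equation}
whose prefactor $1/(k'/2+s)$ is harmless near $s=1$. The strict bound for $l(v)=l(w)$ then comes for free from Theorem~\ref{thm:pole-orders}: the leading Laurent coefficient of $D^{vw}_\a(z,s,0)$ at $s=1$ is a \emph{constant} in $z$, so $R_0$ annihilates it, dropping the pole order by one before one even leaves weight zero. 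No spectral input is needed for $k\neq 0$.

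Your route instead runs into a gap that I don't believe can be closed as you sketch it. First, the ``residual case'' is much larger than $k=\pm2$: whenever $S_2(\G)\neq\{0\}$ (which is the situation of interest here), the operator $\Delta_k$ has $0$ in its discrete spectrum for \emph{every} even $k\neq 0$, because the raising and lowering operators transport the eigenvalue-$0$ eigenspace $\{yf:f\in S_2(\G)\}$ to all higher even weights (cf.~\eqref{ref:Eisenstein-properties}). So $R(s,k)$ has a simple pole at $s=1$ for all even $k$, not just $|k|\le 2$, and your induction would require the projection argument at every step. Second, and more seriously, the proposed analogue of Proposition~\ref{prop:term-vanishes} is false at nonzero weight: the relevant inner products are against $\ker\Delta_{\pm2}=\{yf\}$ (resp.\ $\{y\overline f\}$), and they do not vanish. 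Concretely, take $v=v_1$, $w=\e$, $k=2$ and $q=1$: then $B_{v_1,\e,2}(z,s)=-(1-s)(z-\overline z)f_{v_1}E_\a(z,s,0)$ is regular at $s=1$ with value $\tfrac{2i}{\vol\GmodH}\,yf_{v_1}$, which lies \emph{in} $\ker\Delta_2$ rather than orthogonal to it; the projection is therefore the identity here, not zero. In slightly larger cases (e.g.\ $l(v)<l(w)$ with $k=-2$ and $l(v)=l(w)-1$, where the middle summand $D^{v\,\delta w}_\a(z,s,0)$ has the constant leading coefficient from Theorem~\ref{thm:pole-orders}), the needed vanishing would amount to $\inprod{y\overline{f_{w_1}}}{y\overline h}=0$ for all $h\in S_2(\G)$, which fails for $h=f_{w_1}$. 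So the Proposition~\ref{prop:term-vanishes}-style cancellation is special to weight $0$ (where $\ker\Delta_0$ is the constants and \eqref{ref:kill}/\eqref{ref:adjoint} apply cleanly) and does not transfer. To repair the argument you should replace the ``resolvent at weight $k$'' step by the raising/lowering recursion above, which is exactly what the paper does.
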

\begin{proof}We first prove the result for $k=0$. We start again with an induction in $l(v)+l(w)=q$. For $q=0$ this is a well-known property of the weight 0 Eisenstein series.  
Assume now that the claim has been proved for all $l(v)+l(w)\leq q$. On the right-hand side of the expression \eqref{eq:recursion-k0} the first term has pole order at most $\min(l(v)-1, l(w)-1)+1+1=\min(l(v),l(w))+1$. The second term has pole order at most $\min(l(v),l(w)-1)+1+1$ but this order is attained only if $\inprod{(z-\overline z)\overline{f_{w_1}}R_0D^{v\delta w}_\a(z,s,0)}{1}$ is not identically zero. But by Proposition \ref{prop:term-vanishes} it is identically zero. So this term also has pole order less than or equal to $\min(l(v),l(w))+1+1$. The third term is analyzed in the same way. This finishes the proof when $k=0$. 

For $k>0$, and $l(v)=l(w)$ we use the raising operator: From Proposition \ref{prop:LaplaceD*} \ref{raising} we find  that
\begin{equation}\label{raising-induction}
    D_\a^{v,w}(z,s,k'+2)=\frac{1}{k'/2+s}\left(R_{k'}D_\a^{vw}(z,s,k')-(z-\overline z)f_{v_1}D_\a^{\delta v,w}(z,s,k')\right)
\end{equation}
For $k'=0$ we find by the statement above that last term has pole order at most $l(v)$. The expression $D_\a^{vw}(z,s,0)$ a priori has a pole order of $l(v)+1$, but by Theorem \ref{thm:pole-orders} the corresponding coefficient in the Laurent expansion is constant in $z$ so $R_{0}D_\a^{vw}(z,s,0)$ has a pole order of at most $l(v)$ which proves the claim for $k=2$. The claim for $k>2$ now follows by using \ref{raising-induction} recursively.   
 The claim for $k<0$ is proved analogously by using the lowering operator. 
\end{proof}

We can now use the results about the automorphic twisted Eisenstein series $D^{**}_\a(z,s,k)$ to  find the polar structure the higher order twisted Eisenstein series $E_\a^{**}(z,s,k)$. Using Remark \ref{rem:back-forth},  \eqref{ref:back-forth} and  Theorems \ref{thm:analytic}, \ref{thm:pole-orders}, and \ref{thm:pole-order-bounds} we immediately arrive at the following theorem:
\begin{thm}\label{E-properties}
Let $(v,w)\in V^*\times W^*$. Then 
\begin{enumerate}[label={(\roman*})]
    \item $E^{vw}_\a(z,s,k)$ admits meromorphic continuation to $\Re(s)>1/2$ with all poles $s_0$ satisfying that $s_0(1-s_0)$ is in the spectrum of  $-\Delta_k$.
    \item $E^{vw}_\a(z,s,k)$ grows at most polynomially in $s$ in any vertical strip uniformly for $z$ in any compact set.
    \item The pole at $s=1$ has order at most $\min(l(v),l(w))+1$. 
\end{enumerate}
Assume that $l(v)=l(w)$ such that $\min(l(v),l(w))+1=l(v)+1$. Then
\begin{enumerate}[resume, label={(\roman*})]
    \item if $k=0$, the pole order at $s=1$ equals $l(v)+1$ and the corresponding coefficient in the Laurent expansion equals
    \begin{equation}
    \frac{4^{l(v)}}{\vol\GmodH^{l(v)+1}}  \prod_{i=1}^{l(v)}\inprod{yf_{v_i}}{yf_{w_i}}.
    \end{equation}
    \item if $k\neq 0$ the pole order at $s=1$ is strictly less than $l(v)+1$.
\end{enumerate}
\end{thm}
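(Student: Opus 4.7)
The plan is to reduce everything to the corresponding properties of the automorphic version $D^{vw}_\a(z,s,k)$ by unravelling the generating-series identity from Remark \ref{rem:back-forth}. Extracting the coefficient of $X_v Y_w$ in \eqref{ref:back-forth} yields
\begin{equation}
E^{vw}_\a(z,s,k)=\sum_{\substack{v'v''v'''=v\\ w'w''w'''=w}} I_a^\a(v')\,\overline{I_a^\a(w')}\,D^{v''w''}_\a(z,s,k)\,I_z^a(v''')\,\overline{I_z^a(w''')},
\end{equation}
a finite sum whose only $s$-dependence sits in the factors $D^{v''w''}_\a(z,s,k)$; the remaining factors are holomorphic in $z$ and constant in $s$. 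Parts (i) and (ii) then follow at once from Theorem \ref{thm:analytic} applied termwise.

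For (iii), Theorem \ref{thm:pole-order-bounds} bounds the order of the pole of each $D^{v''w''}_\a(z,s,k)$ at $s=1$ by $\min(\ell(v''),\ell(w''))+1$. Since $v''$ and $w''$ are (consecutive) subwords of $v$ and $w$, we have $\ell(v'')\leq \ell(v)$ and $\ell(w'')\leq \ell(w)$, giving the desired bound $\min(\ell(v),\ell(w))+1$ on the pole order of $E^{vw}_\a(z,s,k)$.

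For (iv), assume $\ell(v)=\ell(w)$ and $k=0$. In order for a summand $D^{v''w''}_\a(z,s,0)$ to contribute a pole of the maximal order $\ell(v)+1$, Theorem \ref{thm:pole-order-bounds} forces $\min(\ell(v''),\ell(w''))=\ell(v)$, which in turn forces $v''=v$, $w''=w$, so that $v'=v'''=w'=w'''=\e$. Only one such term exists, it equals $D^{vw}_\a(z,s,0)$ (since $I_a^\a(\e)=I_z^a(\e)=1$), and its leading Laurent coefficient is given explicitly by Theorem \ref{thm:pole-orders}. Because this coefficient is constant in $z$, no cancellation with the subleading terms can occur, and (iv) follows. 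Part (v) is the same dichotomy: for $k\neq 0$ the only summand that could have reached order $\ell(v)+1$ is again $D^{vw}_\a(z,s,k)$ itself, but Theorem \ref{thm:pole-order-bounds} precisely states that for $k\neq 0$ and $\ell(v)=\ell(w)$ this term has pole order strictly less than $\ell(v)+1$; all other summands already had pole order at most $\ell(v)$ by the subword-length argument, so the overall pole order at $s=1$ is strictly less than $\ell(v)+1$.

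The only delicate point in this plan is bookkeeping of which decompositions $(v',v'',v''',w',w'',w''')$ can saturate the bound in (iii), and noting that the top Laurent coefficient provided by Theorem \ref{thm:pole-orders} is \emph{$z$-independent}; the latter prevents the $z$-dependent iterated-integral factors $I_z^a(v''')$, $\overline{I_z^a(w''')}$ from introducing extra cancellation or growth. Everything else is a direct transcription of Theorems \ref{thm:analytic}, \ref{thm:pole-orders} and \ref{thm:pole-order-bounds} through the identity of Remark \ref{rem:back-forth}.
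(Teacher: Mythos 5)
Your proposal is correct and follows exactly the paper's route: the paper's proof of Theorem~\ref{E-properties} is a one-line deduction from Remark~\ref{rem:back-forth} (equation \eqref{ref:back-forth}) together with Theorems~\ref{thm:analytic}, \ref{thm:pole-orders}, and \ref{thm:pole-order-bounds}, and you have simply spelled out the coefficient extraction and subword bookkeeping that the paper leaves implicit. (One minor remark: for (iv) the ``no cancellation'' conclusion already follows because every summand other than $D^{vw}_\a$ has pole order strictly less than $\ell(v)+1$, so the $z$-independence of the leading coefficient is only needed to identify the coefficient, not to rule out cancellation.)
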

Note that even if we only defined $E^{**}_\a{(z,s,k,\Omega_V, \Omega_W)}$ for $k$ even the same expression defines a function for $k$ odd. But since $\g$ and $-\g$ gives opposite contributions this function is identically zero. Hence Theorem \ref{E-properties} holds trivially for $k$ odd.

\subsection{Fourier coefficients of 
higher order twisted Eisenstein series} 
In this subsection we compute the Fourier coefficients  of the higher order twisted Eisenstein series $E^{**}_\a{( z,s,k,\Omega_V, \Omega_W)}$. The key to this is the following proposition:
\begin{prop}\label{cuspidal-invariance-left-right}
For $v\in V^*$ and $\g, \g_1, \g_2\in \G$ with $\g_1,\g_2$ parabolic we have 
\begin{equation}
J_{a}^{\g_1\g\g_2 b}(v)=J_{a}^{\g b}(v). 
\end{equation}
\end{prop}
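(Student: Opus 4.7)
I will verify the equality at the level of the generating series, namely
\begin{equation}
J_a^{\g_1\g\g_2 b}(\Omega_V)=J_a^{\g b}(\Omega_V),
\end{equation}
from which the identity for each word $v\in V^\ast$ follows by extracting the coefficient of $X_v$. Two applications of the composition-of-paths formula (Proposition \ref{prop:symbols-properties}\,\ref{decomposition-paths}) give
\begin{equation}
J_a^{\g_1\g\g_2 b}(\Omega_V)=J_{\g_1\g b}^{\g_1\g\g_2 b}(\Omega_V)\cdot J_{\g b}^{\g_1\g b}(\Omega_V)\cdot J_a^{\g b}(\Omega_V),
\end{equation}
and $\Gamma$-invariance (Proposition \ref{prop:symbols-properties}\,\ref{gamma-invariance}) applied first with $\g_1$ and then with $\g$ rewrites the leftmost factor as $J_b^{\g_2 b}(\Omega_V)$. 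Thus it suffices to prove the local statement: for any parabolic $\g'\in\G$ and any $c\in\H^\ast$,
\begin{equation}
J_c^{\g' c}(\Omega_V)=1,
\end{equation}
the unit of $\C\langle\!\langle X_V\rangle\!\rangle$. Applying this with $(\g',c)=(\g_1,\g b)$ and $(\g_2,b)$ respectively then yields the proposition.

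To verify the local statement, let $\a'\in\H^\ast$ be the cusp fixed by $\g'$. Composition of paths gives
\begin{equation}
J_c^{\g' c}(\Omega_V)=J_{\g'\a'}^{\g' c}(\Omega_V)\cdot J_{\a'}^{\g'\a'}(\Omega_V)\cdot J_c^{\a'}(\Omega_V).
\end{equation}
Because $\g'\a'=\a'$, the middle factor equals $J_{\a'}^{\a'}(\Omega_V)=1$: a constant path annihilates every non-empty iterated integral, and this evaluation is legitimate at the cusp because the forms in $V$ are cuspidal and so extend continuously by zero to $\a'$. By $\Gamma$-invariance, $J_{\g'\a'}^{\g'c}(\Omega_V)=J_{\a'}^c(\Omega_V)$, and a final application of composition of paths collapses the remaining product to $J_c^c(\Omega_V)=1$, completing the proof.

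The only genuine subtlety is the identity $J_{\a'}^{\a'}(\Omega_V)=1$ at a cusp rather than at an interior point; this relies on the cuspidality of the forms in $V$ together with the simple connectedness of $\H^\ast$, which together guarantee that Manin's iterated integrals are well-defined and path-independent up to the boundary. Once this is in place, everything else is a formal manipulation of the composition-of-paths and $\Gamma$-invariance identities, so I do not expect any additional obstacle.
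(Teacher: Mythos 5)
Your proof is correct and rests on the same underlying algebraic manipulations --- composition of paths and $\Gamma$-invariance --- but there is one substantive difference from the paper's argument: the paper simply cites \cite[Cor.~2.3]{ChintaHorozovOSullivan:2019}, which asserts $J_c^{\g'd}=J_{\g'c}^{d}=J_c^{d}$ for any parabolic $\g'\in\G$ and $c,d\in\H^\ast$, and then notes that the proposition follows; you instead re-derive the essential content of that corollary directly from Proposition~\ref{prop:symbols-properties}, via the equivalent local statement $J_c^{\g'c}(\Omega_V)=1$. Your argument is therefore self-contained where the paper's is not, which is a genuine gain in exposition even if the logical skeleton is the same. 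One small remark on the step you flag as the ``only genuine subtlety'': the identity $J_{\a'}^{\a'}(\Omega_V)=1$ actually follows purely formally from Proposition~\ref{prop:symbols-properties}~\ref{decomposition-paths}. Taking $a=b=c$ there gives $J_c^c=(J_c^c)^2$, and since $J_c^c$ has constant term $1$ it is invertible in $\C\langle\!\langle X_V\rangle\!\rangle$, so $J_c^c=1$ for every $c\in\H^\ast$, cusp or not. Thus the explicit appeal to cuspidality is not needed for this particular identity; cuspidality of the forms in $V$ is of course still the reason the series $J_a^b$ are well-defined in the first place when the endpoints are cusps.
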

\begin{proof}
By \cite[Cor. 2.3]{ChintaHorozovOSullivan:2019} we have, for $\g_i$ parabolic,  that  $$J_c^{\g_i d}(v)=J_{\g_ic}^{  d}(v)=J_c^{d}(v)$$ for any $c,d\in \H^*$. Combining this with the $\G$-invariance from Proposition \ref{prop:symbols-properties} \eqref{gamma-invariance} we have $J_{\g c}^{  \g d}(v)=J_{c}^{  d}(v)$  the result follows easily.
\end{proof}
Once Proposition \ref{cuspidal-invariance-left-right} is established the computation of the Fourier coefficients proceeds analogously to the computation for the standard Eisenstein series: 
We find using Proposition \ref{cuspidal-invariance-left-right} and \eqref{j-properties} the expression
\begin{align}
   j&_{\sigma_{\b}}^{-k}(z)E^{**}_\a{(\sigma_\b z,s,k)}-\delta_{\a\b}y^s\\
   &\quad =\sum_{ \sigma_\a^{-1}\sigma_\b\neq \g\in \Gamma_{\!\infty}\!\backslash \sigma_{\a}^{-1}\G\sigma_{\b}/\Gamma_{\!\infty}} \!\!\!\!\!\!\!\!\!\!\!\!\!\!\!\!\!\!\!\!\!J_a^{\sigma_\a\g\sigma_\b^{-1} a}(\Omega_V)\overline{J_a^{\sigma_\a\g\sigma_\b^{-1} a}(\Omega_W)}\sum_{n \in \Z}j_{\g}^{-k}(z+n)\Im(\g(z+n))^s
\end{align}
where we have used that the term coming from the identity term in $\G$ satisfies $J_a^{a}=1$.
The inner sum $S_k(x)$ can be analyzed as for the standard Eisenstein series either by using Poisson summation (as in \cite[Sec. 3.4]{Iwaniec:2002a}) or by computing its Fourier coefficients directly as
\begin{align}
    \int_{0}^1S_k(x)&e(-nx)dx=\int_{-\infty}^\infty j_{\g}^{-k}(z)\Im(\g(z))^s e(-nx)dx\\
    &=\int_{-\infty}^\infty \left(\frac{c(x+iy)+d}{\abs{c(x+iy)+d}}\right)^{-k}\frac{y^s}{\abs{c(x+iy)+d}^{2s}} e(-nx)dx\\
    &=\frac{y^s}{\abs{c}^{2s}}\int_{-\infty}^\infty\left(\frac{(x+iy)+\frac{d}{c}}{\abs{(x+iy)+\frac{d}{c}}}\right)^{-k}\frac{1}{\abs{(x+iy)+\frac{d}{c}}^{2s}} e(-nx)dx\\
    &=\frac{y^s}{\abs{c}^{2s}}e(nd/c)\int_{-\infty}^\infty\left(\frac{x+iy}{\abs{x+iy}}\right)^{-k} \frac{e(-nx)}{(x^2+y^2)^s} dx\\
    &=\frac{e(nd/c)}{\abs{c}^{2s}i^k}\begin{cases} 
    (4y)^{1-s}\frac{\Gamma(2s-1)}{\Gamma(s+\frac{k}{2})\Gamma(s-\frac{k}{2})} & n=0\\
    \frac{\abs{n}^{s-1}\pi^s}{\Gamma(s+\sgn(n)\frac{k}{2})}W_{\frac{k}{2}\sgn{n},s-\frac{1}{2}}(4\pi \abs{n}y) & n\neq 0
    \end{cases}
\end{align}
where we have evaluated the integral as in \cite[p.7-8]{Young:2019}. If $k=0$ the Whittaker function simplifies to a $K$-Bessel function such that in this case
\begin{align}
    \int_{0}^1S_0(x)e(-nx)dx
    =\frac{e(nd/c)}{\abs{c}^{2s}}
    \begin{cases} 
    y^{1-s}\sqrt{\pi}\frac{\Gamma(s-\frac{1}{2})}{\Gamma(s)} & n=0\\
    \frac{2\abs{n}^{s-1/2}\pi^s}{\Gamma(s)}\sqrt{y}K_{s-\frac{1}{2}}(2\pi \abs{n}y) & n\neq 0
    \end{cases}
\end{align}

We can now use the usual double coset decomposition \cite[Thm 2.7]{Iwaniec:2002a} to write the full Fourier decomposition. We do this only when $k=0$ as the the $k$ dependence is only in the Whittaker function so that the arithmetic part is independent of $k$. We have

\begin{equation}
        E^{**}_\a{(\sigma_\b z,s,0)}=\delta_{\a\b}y^s +\varphi_{\a\b}^{**}(s)y^{1-s} +\sum_{n\neq 0}\varphi_{\a\b}^{**}(s,n)\sqrt{y}K_{s-1/2}(2\pi \abs{n}y)e(nx),
\end{equation}
where 

\begin{equation}
\varphi_{\a\b}^{**}(s):=\varphi_{\a\b}^{**}(s,0):=\sqrt{\pi}\frac{\Gamma(s-1/2)}{\Gamma(s)}L_{\a, \b}^{**}(s,0,0),
\end{equation}
\begin{equation}
\varphi_{\a\b}^{**}(s,n)=\frac{2\abs{n}^{s-1/2}\pi^s}{\Gamma(s)}L_{\a, \b}^{**}(s,0,n),
\end{equation}
and 
\begin{equation}
    L_{\a, \b}^{**}(s,m,n)=\sum_{c>0}\frac{S_{\a\b}^{**}(m,n,c)}{c^{2s}}.
\end{equation}

Here the sum over $c$ is over all  positive lower left entries of  $\sigma_{\a}^{-1}\G\sigma_\b$ and
\begin{equation}
    S_{\a\b}^{**}(m,n,c)=
\sum_{\g=\begin{psmallmatrix}a & *\\c&d 
    \end{psmallmatrix}\in \sigma_{\a}^{-1}\G\sigma_\b } 
    J_a^{\sigma_\a\g\sigma_\b^{-1} a}(\Omega_V)\overline{J_a^{\sigma_\a\g\sigma_\b^{-1} a}(\Omega_W)}e(\frac{am+dn}{c})
\end{equation}

We note that by comparing coefficients we find that for $(v,w)\in V^*\times W^*$ we have
\begin{align}
    E^{vw}_\a{(\sigma_\b z,s,0)}=\delta_{v\e}\delta_{w\e}&\delta_{\a\b}y^s +\varphi_{\a\b}^{vw}(s)y^{1-s}\\ & +\sum_{n\neq 0}\varphi_{\a\b}^{vw}(s,n)\sqrt{y}K_{s-1/2}(2\pi \abs{n}y)e(nx)
\end{align}
where
\begin{equation}
\varphi_{\a\b}^{vw}(s):=\varphi_{\a\b}^{vw}(s,0):=\sqrt{\pi}\frac{\Gamma(s-1/2)}{\Gamma(s)}L_{\a, \b}^{vw}(s,0,0),
\end{equation}
\begin{equation}
\varphi_{\a\b}^{vw}(s,n):=\frac{2\abs{n}^{s-1/2}\pi^s}{\Gamma(s)}L_{\a, \b}^{vw}(s,0,n).
\end{equation}
and 
\begin{align}
    L_{\a, \b}^{vw}(s,m,n) &=\sum_{c>0}\frac{S_{\a\b}^{vw}(m,n,c)}{c^{2s}}\\
        S_{\a\b}^{vw}(m,n,c) &=
\sum_{\g=\begin{psmallmatrix}a & *\\c&d 
    \end{psmallmatrix}\in \sigma_{\a}^{-1}\G\sigma_\b } 
    I_a^{\sigma_\a\g\sigma_\b^{-1} a}(v)\overline{I_a^{\sigma_\a\g\sigma_\b^{-1} a}(w)}e(\frac{am+dn}{c})
\end{align}

Note that the sum over $c$ is uniformly and absolutely convergent for compact subsets of $\Re(s)>1$ by standard bounds in the theory of Eisenstein series and \cite[Cor. 2.6]{ChintaHorozovOSullivan:2019}.

\begin{thm}\label{thm:dirichlet-series-prop}Let $(v,w)\in V^*\times W^*$.
The function
$L^{vw}_{\a\b}(s,0,0)$
originally defined for $\Re(s)>1$ 
\begin{enumerate}
    \item admits meromorphic continuation to $Re(s)>1/2$ with all poles $s_0$ satisfying that $s_0(1-s_0)$ is in the spectrum of  $-\Delta_k$. 
    Moreover $L^{vw}_{\a\b}(s,0,0)$ 
    \item grows at most polynomially in $s$ in any vertical strip.

    \item The pole at $s=1$ is of order at most $\min(l(v),l(w))+1$. 
    \item If $l(v)=l(w)$ such that $\min(l(v),l(w))+1=l(v)+1$ the pole order at $s=1$ equals $l(v)+1$ and the corresponding coefficient in the Laurent expansion equals
    \begin{equation}
    \frac{1}{\pi}\frac{4^{l(v)}}{\vol\GmodH^{l(v)+1}}  \prod_{i=1}^{l(v)}\inprod{yf_{v_i}}{yf_{w_i}}.
    \end{equation}
\end{enumerate}
\end{thm}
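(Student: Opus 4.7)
The plan is to read off everything from the Fourier expansion that was just computed. Recall that the zero Fourier coefficient is
\begin{equation}
\int_0^1 E^{vw}_\a(\sigma_\b(x+iy),s,0)\,dx = \delta_{v\e}\delta_{w\e}\delta_{\a\b}\,y^s + \varphi^{vw}_{\a\b}(s)\,y^{1-s},
\end{equation}
with $\varphi^{vw}_{\a\b}(s)=\sqrt{\pi}\,\Gamma(s-1/2)/\Gamma(s)\cdot L^{vw}_{\a\b}(s,0,0)$. The gamma factor is meromorphic on $\C$, is regular and non-vanishing at $s=1$ where it takes the value $\pi$, and by Stirling decays polynomially on vertical strips. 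Hence $L^{vw}_{\a\b}(s,0,0)$ and $\varphi^{vw}_{\a\b}(s)$ share their analytic continuation, vertical-strip growth and pole structure at $s=1$, so it suffices to establish the corresponding properties for $\varphi^{vw}_{\a\b}(s)$.

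For parts (1)--(3), I would first apply Theorem \ref{E-properties} to $E^{vw}_\a(\sigma_\b z,s,0)$, obtaining meromorphic continuation to $\Re(s)>1/2$ with the prescribed location of poles, polynomial growth in $s$ on vertical strips uniformly for $z$ in a compact set, and pole order at most $\min(l(v),l(w))+1$ at $s=1$. Integrating in $x\in[0,1]$ preserves all three properties for the zero Fourier coefficient $F(y,s):=\delta_{v\e}\delta_{w\e}\delta_{\a\b}y^s+\varphi^{vw}_{\a\b}(s)y^{1-s}$. Since $y^s$ and $y^{1-s}$ are entire and non-vanishing in $s$ for fixed $y>0$, one can isolate $\varphi^{vw}_{\a\b}(s)$: when $\delta_{v\e}\delta_{w\e}\delta_{\a\b}=0$ take $y=1$; in the remaining case $v=w=\e$, $\a=\b$ evaluate $F$ at two distinct values $y_1\neq y_2$ and solve the resulting $2\times 2$ linear system. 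This transports the three properties to $\varphi^{vw}_{\a\b}(s)$, and dividing by the non-vanishing gamma factor then yields them for $L^{vw}_{\a\b}(s,0,0)$.

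For part (4), assume $l(v)=l(w)=l$. Theorem \ref{E-properties}(iv) provides the coefficient of $(s-1)^{-(l+1)}$ in the Laurent expansion of $E^{vw}_\a(z,s,0)$ at $s=1$: it equals the constant (in $z$)
\begin{equation}
C:=\frac{4^l}{\vol{\GmodH}^{l+1}}\prod_{i=1}^l\inprod{yf_{v_i}}{yf_{w_i}}.
\end{equation}
Integrating this constant over $x\in[0,1]$ returns $C$, so the coefficient of $(s-1)^{-(l+1)}$ in $F(y,s)$ is also $C$. Expanding $y^{1-s}=1-(s-1)\log y+O((s-1)^2)$ at $s=1$ shows that the leading pole coefficient of $\varphi^{vw}_{\a\b}(s)y^{1-s}$ equals the leading Laurent coefficient $a_{-(l+1)}$ of $\varphi^{vw}_{\a\b}(s)$ itself (the $y^s$ term being regular in $s$ when $l\geq 1$, and the case $l=0$ recovering the well-known residue of the standard Eisenstein series). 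Hence $a_{-(l+1)}=C$, and since the gamma factor equals $\pi$ at $s=1$, the leading Laurent coefficient of $L^{vw}_{\a\b}(s,0,0)$ is $C/\pi$, as claimed.

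The main delicacy is part (4): it rests squarely on the constancy in $z$ of the leading Laurent coefficient provided by Theorem \ref{E-properties}(iv). Were that coefficient a non-constant function of $z$, integration over $x$ would not simply reproduce it, and the $y$-dependence of $y^{1-s}$ would obstruct the identification $a_{-(l+1)}=C$. Everything else is routine bookkeeping with gamma factors and Laurent expansions.
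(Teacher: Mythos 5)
Your proposal follows essentially the same route as the paper's proof: integrate $E^{vw}_\a(\sigma_\b z,s,0)$ over $x\in[0,1]$ to isolate the zero Fourier coefficient, import the meromorphic continuation, polynomial growth and pole data from Theorem~\ref{E-properties}, and then transfer them to $L^{vw}_{\a\b}(s,0,0)$ by dividing out the gamma factor $\sqrt{\pi}\,\Gamma(s-1/2)/\Gamma(s)$ using Stirling and $\Gamma(1/2)=\sqrt{\pi}$. You have simply filled in the bookkeeping that the paper leaves implicit (the $y$-dependence of $y^{1-s}$ at $s=1$, the special case $v=w=\e$, and the observation that the leading Laurent coefficient of $E^{vw}_\a$ must be constant in $z$ for the integration argument to produce it intact).
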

\begin{proof}
This follows directly from Theorem \ref{E-properties} by considering 
\begin{equation}
    \int_0^1E^{vw}(z,s)dx=\delta_{v\e}\delta_{w\e}\delta_{\a\b}y^s +\sqrt{\pi}\frac{\Gamma(s-1/2)}{\Gamma(s)}L_{\a, \b}^{vw}(s,0,0)y^{1-s}
\end{equation}
for any fixed $y$, combined with the Stirling asymptotics on the Gamma function, and that $\Gamma(1/2)=\sqrt{\pi}.$ 
\end{proof}

\begin{rem}\label{rem:translation to T_ab}
Consider, as in \cite[Sec 2.2]{PetridisRisager:2018a},  the infinite set
\begin{equation}
    T_{\a\b}=\left\{\frac{a}{c} \bmod 1, \begin{pmatrix}a&b\\c&d\end{pmatrix}\in \G_\infty\backslash\sigma_{\a}^{-1}\G\sigma_\b\slash \G_\infty\textrm{ and }c>0 \right\}\subseteq \R/\Z
\end{equation}
Given an $r\in T_{\a\b}$ we have \cite[Cor 2.3]{PetridisRisager:2018a} that there exist a unique number $c(r)>0$ and unique cosets $a(r)\bmod c(r)$ $a(r)\bmod c(r)$ such $a(r)d(r)=1\bmod c(r)$ and $$r=\frac{a(r)}{c(r)} \bmod 1.$$

For $v\in  V^*$ and $r\in T_{\a\b}$ we see that

\begin{equation}\label{nice-map}
    I_{\b}^{\sigma_\a r }(v)= I_{\b}^{\sigma_\a \gamma \sigma_b^{-1}\b }(v)
\end{equation}
where $\g\in \sigma_{\a}^{-1}\G\sigma_\b$ with positive lower left entry and satifying $\g(\infty)=r$.

Now we note that if we set $a=\b$ then
\begin{equation}
L_{\a\b}^{vw}(s,m,n)=\sum_{r\in T_{\a\b}}\frac{I_{\b}^{\sigma_\a r }(v)\overline{I_{\b}^{\sigma_\a r }(w)}e(mr+n\overline r)}{c(r)^{2s}}
\end{equation}
where if $r=a(r)/c(r) \bmod 1$ then we have defined $\overline r=d(r)/c(r) \bmod 1$. Note that we have put $a=\b$ in the definition of $S_{\a\b}^{vw}(m,n,c)$. 

\end{rem}

\begin{rem}\label{rem:added-exponential}
One can prove a result analogous to Theorem \ref{thm:dirichlet-series-prop} for the functions $L^{vw}_{\a\b}(s,0,n)$, and $L^{vw}_{\a\b}(s,n,0)$ In this case the functions grows at most polynomially in $s$ and $n$, and the pole order when $l(v)=l(w)$ is strictly less than $l(v)+1$. The reason we cannot simply copy the above proof to prove this claim for $L^{vw}_{\a\b}(s,0,n)$ is that we do not have good enough control on $\sqrt{y}K_{s-1/2}(2\pi \abs{n}y)/\G(s)$. We need good lower bounds. One way around this is to realize $L^{vw}_{\a\b}(s,0,n)$ as an expression involving inner products of $D^{vw}_\a(z,s,0)$ with Poincare series twisted with noncommutative modular symbols. In the case of classical modular symbols this is carried out in \cite[Sec 6.]{PetridisRisager:2018a}, and we leave the details to the reader.  It is straightforward to verify
\begin{equation}
    S_{\a\b}^{vw}(m,n,c)=\overline{S_{\a\b}^{wv}(-m,-n,c)},
\end{equation}and
\begin{equation}
    S_{\a\b}^{wv}(m,n,c)=(-1)^{l(v)+l(w)}S_{\b\a}^{v^rw^r}(-n,-m,c),
\end{equation}
where in the second equation we have used $\g\mapsto \g^{-1}$ and  the reversal of path formula \cite[Thm 3.19]{GilFresan:2017}.
Using this we find that $$L^{vw}_{\a\b}(s,n,m)=(-1)^{l(v)+l(w)}L^{v^rw^r}(s,-n,-m),$$ so any result about $L^{vw}_{\a\b}(s,0,n)$, may be translated into a result about $L^{vw}_{\a\b}(s,n,0).$
\end{rem}

\section{Moments of noncommutative modular symbols}
\label{sec:Moments-computation}
In the previous section we found the polar structure of the two Dirichlet series $E_a^{vw}(z,s,0)$and $L_{\a\b}^{vw}(s,0,0)$. We now translate that into average results about noncommutative modular symbols. 

Consider the two finite sets
\begin{align}
    (\G_\a\backslash\G)(M)&:=\{ \g\in \G_\a\backslash\G\vert\, \Im(\sigma_\a^{-1}\g z)^{-1}\leq M\}\\
    T_{\a\b}(M)&:=\{ r\in T_{\a\b}\vert\, c(r)^2\leq M\}.
\end{align}
These will eventually correspond to two different orderings of the noncommutative modular symbols.  Theorem \ref{E-properties} and  Theorem \ref{thm:dirichlet-series-prop} in combination with Remark \ref{rem:translation to T_ab} allows us to use a complex integration argument (See e.g. \cite[Appendix A]{Nordentoft:2018}) to conclude the following:

Let $v\in V^*$, $w\in W^*$ and assume that $l(v)=l(w)$. We define
\begin{equation}\label{def:B}
    B(v,w)=\frac{\displaystyle4^{l(v)}\prod_{i=1}^{l(v)}\inprod{yf_{v_i}}{yf_{w_i}}}{\displaystyle l(v)!\vol\GmodH^{l(v)+1}}, 
\end{equation} if $l(v)=l(w)$, and $B(v,w)=0$ if $l(v)\neq l(w)$
Then we have the following asymptotical result: 

\begin{thm}\label{thm:asymptotics}
Let $(v,w)\in V^*\times W^*$,
If $l(v)=l(w)$ then 
\begin{align}
\sum_{\g\in (\G_\a\backslash\G)(M)}I_a^{\g a}(v)\overline{I_a^{\g a}(w)}
&=B(v,w) M\log^{l(v)}M+O(M\log^{l(v)-1}M)\\
\sum_{r\in T_{\a\b}(M)}I_\b^{\sigma_\a r}(v)\overline{I_\b^{\sigma_\a r}(w)}
&=\frac{1}{\pi}B(v,w) M\log^{l(v)}M+O(M\log^{l(v)-1}M)
\end{align}
For $v,w$ of any lengths the sums on the left are $O(M\log^{\min{(l(v),l(w))}}M)$.
\end{thm}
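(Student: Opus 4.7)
The plan is to interpret both sums as partial sums of Dirichlet-type series that have already been analysed in the excerpt, and then extract asymptotics by a standard Perron/contour-shift argument.

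For the first sum, observe that the defining series for the higher order twisted Eisenstein series
\begin{equation}
E^{vw}_\a(z,s,0)=\sum_{\g\in \G_\a\backslash\G} I_a^{\g a}(v)\overline{I_a^{\g a}(w)}\,\Im(\sigma_\a^{-1}\g z)^s
\end{equation}
is, for any fixed $z$, a generalised Dirichlet series $\sum_\g a_\g \lambda_\g^{-s}$ with coefficients $a_\g=I_a^{\g a}(v)\overline{I_a^{\g a}(w)}$ and frequencies $\lambda_\g=\Im(\sigma_\a^{-1}\g z)^{-1}$. The condition $\g\in(\G_\a\backslash\G)(M)$ is exactly $\lambda_\g\le M$. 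By Perron's formula, for a suitable $c>1$,
\begin{equation}
\sum_{\g\in(\G_\a\backslash\G)(M)} a_\g=\frac{1}{2\pi i}\int_{c-iT}^{c+iT}E^{vw}_\a(z,s,0)\frac{M^s}{s}ds+(\text{Perron error}),
\end{equation}
and I would shift the contour to a line $\Re(s)=1/2+\delta$, picking up the contribution from the pole of $E^{vw}_\a(z,s,0)$ at $s=1$. By Theorem \ref{E-properties}(iii) this pole has order at most $\min(l(v),l(w))+1$, and when $l(v)=l(w)$ the pole has order exactly $l(v)+1$ with leading Laurent coefficient $\frac{4^{l(v)}}{\vol\GmodH^{l(v)+1}}\prod_{i=1}^{l(v)}\inprod{yf_{v_i}}{yf_{w_i}}$. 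Expanding $M^s/s$ around $s=1$ as $M\cdot e^{(s-1)\log M}\cdot(1-(s-1)+\cdots)$ and extracting the residue of order $l(v)+1$ produces a polynomial in $\log M$ of degree $l(v)$ whose leading coefficient is exactly $B(v,w)$, while all subordinate Laurent coefficients contribute only $O(M\log^{l(v)-1}M)$.

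The integral on the new contour $\Re(s)=1/2+\delta$ and the truncation errors from the Perron formula are controlled by the polynomial growth of $E^{vw}_\a(z,s,0)$ in vertical strips, which is also part of Theorem \ref{E-properties}. This is exactly the setup of the general-purpose Perron-type lemma in \cite[Appendix A]{Nordentoft:2018} that the authors invoke, and produces the stated error $O(M\log^{l(v)-1}M)$. In the case $l(v)\ne l(w)$ the same contour shift picks up a pole at $s=1$ of order at most $\min(l(v),l(w))+1$, giving the overall bound $O(M\log^{\min(l(v),l(w))}M)$.

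For the second sum I would repeat the argument verbatim, but now applied to the Dirichlet series $L^{vw}_{\a\b}(s,0,0)$, using the identification from Remark \ref{rem:translation to T_ab}
\begin{equation}
L^{vw}_{\a\b}(s,0,0)=\sum_{r\in T_{\a\b}}\frac{I_\b^{\sigma_\a r}(v)\overline{I_\b^{\sigma_\a r}(w)}}{c(r)^{2s}},
\end{equation}
so that the sum over $r\in T_{\a\b}(M)$, i.e.\ $c(r)^2\le M$, corresponds precisely to the Perron cutoff at $M$. Theorem \ref{thm:dirichlet-series-prop} supplies both the location/order of the pole at $s=1$ and the leading Laurent coefficient, which differs from the Eisenstein-series case by the factor $1/\pi$ coming from $\sqrt{\pi}\,\Gamma(s-1/2)/\Gamma(s)$ at $s=1$; together with polynomial growth in vertical strips, this gives exactly the asserted main term $\tfrac{1}{\pi}B(v,w)M\log^{l(v)}M$ and error $O(M\log^{l(v)-1}M)$.

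The main obstacle, as usual in this kind of Tauberian/Perron argument, is bookkeeping: one has to check that the shifted-contour integral is genuinely lower order, and that the truncation in Perron's formula (choosing $T$ as a suitable power of $M$) is compatible with the polynomial growth estimate. Since all of this is packaged in \cite[Appendix A]{Nordentoft:2018} and the necessary analytic input is precisely what Theorems \ref{E-properties} and \ref{thm:dirichlet-series-prop} provide, this step is routine once those theorems are in hand, and no new ideas are needed beyond the contour shift.
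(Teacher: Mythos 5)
Your proposal is correct and matches the paper's own argument: the authors also prove Theorem \ref{thm:asymptotics} by the same Perron/contour-shift method (explicitly citing \cite[Appendix A]{Nordentoft:2018}), with the pole data from Theorems \ref{E-properties} and \ref{thm:dirichlet-series-prop}, the polynomial growth in vertical strips controlling the shifted contour, and Remark \ref{rem:translation to T_ab} supplying the identification of $L^{vw}_{\a\b}(s,0,0)$ with the Dirichlet series over $T_{\a\b}$. Your bookkeeping of the leading Laurent coefficient, the $1/l(v)!$ from the residue, and the $1/\pi$ normalisation for the second sum is all consistent with the paper.
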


\begin{rem}
It is possible to refine the above asymptotic to considering only $\gamma$ satisfying  $\frac{1}{2\pi}\hbox{Arg}(j(\sigma_\a^{-1}\g,z)\in I$ resp. $r\in I$ for some interval of $\R/\Z$. In this case the right hand side simply gets multiplied by $\abs{I}$ This can be interpreted as a sort of homogeneity in angle sectors resp. subintervals of $\R\backslash\Z$. For the second sum this would follow from Remark \ref{rem:added-exponential} which allows to get upper bounds on sums of the form  
\begin{equation}
    \sum_{r\in T_{\a\b}(M)} I_\b^{\sigma_\a r}(v)\overline{I_\b^{\sigma_\a r}(w)} e(nr)
\end{equation} 
when $n\neq 0$.
Using an approximation of the indicator function of $I$ by exponentials one finds asymptotics for 
\begin{equation}
   \sum_{r\in T_{\a\b}(M)\cap I}I_\b^{\sigma_\a r}(v)\overline{I_\b^{\sigma_\a r}(w)}
    \end{equation}
For the first sum in Theorem \ref{thm:asymptotics} this follows in the same fashion by considering all weights $k$ in Theorem \ref{E-properties}
\end{rem}

We can now use Theorem \ref{thm:asymptotics} and the shuffle product formula \eqref{shuffle-product-formula} to compute the result if we replace the noncommutative modular symbols in Theorem \ref{thm:asymptotics} by powers of the same.
We  define
\begin{equation}
C_{n,m}(v,w)=\sum_{u_1,u_2\in V^*}c_{v^{\shuffle n}}(u_1)c_{w^{\shuffle m}}(u_2)B(u_1,u_2)
\end{equation}
if $nl(v)=ml(w)$, and zero otherwise. 

\begin{thm}\label{thm:moments}
Let $(v,w)\in V^*\times W^*$,
If $nl(v)=ml(w)$ then 
\begin{align}
\sum_{\g\in (\G_\a\backslash\G)(M)}I_a^{\g a}(v)^n\overline{I_a^{\g a}(w)}^m
&=C_{n,m}(v,w) M\log^{nl(v)}M+O(M\log^{nl(v)-1}M)\\
\sum_{r\in T_{\a\b}(M)}I_\b^{\sigma_\a r}(v)^n\overline{I_\b^{\sigma_\a r}(w)}^m
&=\frac{1}{\pi}C_{n,m}(v,w) M\log^{ml(v)}M+O(M\log^{nl(v)-1}M)
\end{align}
For $v,w$ of any lengths the sums on the left are $O(M\log^{\min{(nl(v),ml(w))}}M)$.
\end{thm}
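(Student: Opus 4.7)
The plan is to reduce Theorem \ref{thm:moments} to Theorem \ref{thm:asymptotics} by using the shuffle product formula \eqref{shuffle-product-formula} to linearise the powers $I_a^{\g a}(v)^n$ and $\overline{I_a^{\g a}(w)}^m$ into (finite) linear combinations of plain noncommutative modular symbols. Concretely, by \eqref{shuffle-product-formula} we have
\begin{equation}
I_a^{\g a}(v)^n=\sum_{u_1\in V^*}c_{v^{\shuffle n}}(u_1)I_a^{\g a}(u_1),\qquad \overline{I_a^{\g a}(w)}^m=\sum_{u_2\in W^*}c_{w^{\shuffle m}}(u_2)\overline{I_a^{\g a}(u_2)},
\end{equation}
where the sums are finite and only contain words $u_1$ of length $\ell(u_1)=n\ell(v)$ and $u_2$ of length $\ell(u_2)=m\ell(w)$.

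Multiplying these two identities, exchanging the order of summation, and interchanging the finite $(u_1,u_2)$-sum with the $\g\in(\G_\a\backslash\G)(M)$-sum (and similarly with the $r\in T_{\a\b}(M)$-sum) reduces both sums in Theorem \ref{thm:moments} to finite linear combinations of sums of the form handled in Theorem \ref{thm:asymptotics}:
\begin{equation}
\sum_{\g\in(\G_\a\backslash\G)(M)}I_a^{\g a}(v)^n\overline{I_a^{\g a}(w)}^m=\sum_{u_1,u_2}c_{v^{\shuffle n}}(u_1)c_{w^{\shuffle m}}(u_2)\sum_{\g\in(\G_\a\backslash\G)(M)}I_a^{\g a}(u_1)\overline{I_a^{\g a}(u_2)}.
\end{equation}

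Now I would split into cases based on whether $n\ell(v)=m\ell(w)$. If $n\ell(v)=m\ell(w)$, then every $(u_1,u_2)$ in the finite sum satisfies $\ell(u_1)=\ell(u_2)=n\ell(v)$, so each inner sum has the leading main term $B(u_1,u_2)M\log^{n\ell(v)}M$ plus an error $O(M\log^{n\ell(v)-1}M)$; summing the main terms weighted by $c_{v^{\shuffle n}}(u_1)c_{w^{\shuffle m}}(u_2)$ gives exactly $C_{n,m}(v,w)M\log^{n\ell(v)}M$ by the definition of $C_{n,m}(v,w)$, and since only finitely many $(u_1,u_2)$ appear the errors combine into a single $O(M\log^{n\ell(v)-1}M)$. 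The argument for $T_{\a\b}(M)$ is identical, carrying through the factor $1/\pi$. If $n\ell(v)\neq m\ell(w)$, then $\ell(u_1)\neq \ell(u_2)$ for every $(u_1,u_2)$ appearing; the last part of Theorem \ref{thm:asymptotics} gives the bound $O(M\log^{\min(n\ell(v),m\ell(w))}M)$ on each inner sum, and finiteness of the outer sum preserves this estimate, yielding the stated upper bound.

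There is no real obstacle here beyond careful bookkeeping: the shuffle formula converts a multiplicative problem into an additive one to which the previously-established asymptotic in Theorem \ref{thm:asymptotics} applies termwise, and the definition of $C_{n,m}(v,w)$ is precisely tailored to absorb the combinatorial coefficients $c_{v^{\shuffle n}}(u_1)c_{w^{\shuffle m}}(u_2)$ that arise when collecting leading terms.
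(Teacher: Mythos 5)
Your proposal is correct and follows exactly the route the paper takes: the paper's proof is the one-line remark that the result follows from Theorem \ref{thm:asymptotics} together with the shuffle-product formula \eqref{shuffle-product-formula}, which is precisely the linearisation-then-termwise-asymptotics argument you carried out in detail.
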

\begin{proof}
This follows directly from Theorem \ref{thm:asymptotics} and \eqref{shuffle-product-formula}.
\end{proof}

Fix a $v\in V$.We are interested in the the limiting distribution of 
\begin{enumerate}
    \item $I_{a}^{\g a}(v)$ for fixed $a\in \H^*, z\in \H$ for $\g$ in $(\G_\a\backslash\G)(M)$ as $M\to\infty$. 
    \item  $I_{\b}^{\sigma_{\a}r}(v)$ for fixed $\a,\b$ with $r\in T_{\a\b}(M)$, as $M\to\infty.$   
\end{enumerate}
We renormalize these to have finite moments as follows: For $C\subseteq\C$ a sufficiently nice subset of the complex plane consider the following two complex random variables:  
\begin{equation}
X_M^v(C)=\frac{\#\{\g\in (\G_\a\backslash\G)( M)\vert\, \left(\frac{\vol{\GmodH}}{4\log(\Im(\sigma_\a^{-1}\g z)^{-1})}\right)^{l(v)/2} I_{a}^{\g a}(v)\in C\}}{\#(\G_\a\backslash\G)(M)}
\end{equation}
\begin{equation}
Y_M^v(C)=\frac{\#\{r\in T_{\a\b}(M)\vert\, \left(\frac{\vol{\GmodH}}{4\log(c(r)^2)}\right)^{l(v)/2} I_{\b}^{\sigma_{\a}r}(v)\in C\}}{\#T_{\a\b}(M)}
\end{equation}
We denote the complex moments of a complex random $X$ by $\E(X^{n_1}\overline X^{n_2})$, $n_1,n_2\in \N\cup \{0\}$. Collecting the results we have proved so far we arrive at the following:
Define 
\begin{equation} m_{n_1,n_2}(v)=\frac{1}{(nl(v))!}\sum_{\substack{u_1,u_2\in V^*\\ l(u_i)=nl(v)}} 
c_{v^{\shuffle n}}(u_1)c_{v^{\shuffle n}}(u_2) 
\prod_{i=1}^{nl(v)}
\inprod{yf_{u_{1,i}}}{yf_{u_{2,i}}}
\end{equation}
if $n_1=n_2=n$ and 0 otherwise!

\begin{thm}\label{thm:finalmoments}For any $\e\neq v\in V^*$
\begin{align} 
     \E((X_M^v)^{n_1}\overline{(X_M^v)}^{n_2})\to & m_{n_1,n_2}(v)\\
    \E((Y_M^v)^{n_1}\overline{(Y_M^v)}^{n_2})\to &  m_{n_1,n_2}(v)
\end{align}
as $M\to\infty.$  

\end{thm}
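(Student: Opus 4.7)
The plan is to deduce both convergences from Theorem \ref{thm:moments} by an Abel-summation argument that accounts for the $\log(\cdot)$ factor in the normalization of $X_M^v$ and $Y_M^v$, combined with the counting asymptotics
\begin{equation}
\#(\G_\a\backslash\G)(M) \sim \frac{M}{\vol{\GmodH}}, \qquad \#T_{\a\b}(M) \sim \frac{M}{\pi \vol{\GmodH}},
\end{equation}
which follow from the $(v,w)=(\e,\e)$ case of Theorem \ref{thm:asymptotics}, since $B(\e,\e)=\vol{\GmodH}^{-1}$.

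For the diagonal case $n_1 = n_2 = n$ I abbreviate $t(\g) := \Im(\sigma_\a^{-1}\g z)^{-1}$ and rewrite the moment as
\begin{equation}
\E\bigl((X_M^v)^n\overline{(X_M^v)}^n\bigr) = \frac{(\vol{\GmodH}/4)^{nl(v)}}{\#(\G_\a\backslash\G)(M)} \sum_{\g \in (\G_\a\backslash\G)(M)} \frac{I_a^{\g a}(v)^n \overline{I_a^{\g a}(v)}^n}{\log(t(\g))^{nl(v)}}.
\end{equation}
Abel summation applied to this inner sum, using the summatory function $g(M) = C_{n,n}(v,v) M \log^{nl(v)}(M) + O(M \log^{nl(v)-1}(M))$ supplied by Theorem \ref{thm:moments}, produces a boundary term $g(M)/\log(M)^{nl(v)} = C_{n,n}(v,v) M + O(M/\log M)$ together with a correction integral of size $O(M/\log M)$. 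After dividing by $\#(\G_\a\backslash\G)(M)$ and substituting the explicit formula for $B(u_1,u_2)$ into $C_{n,n}(v,v)$, the powers of $4$ and all but one factor of $\vol{\GmodH}$ cancel, and the limit equals exactly $m_{n,n}(v)$.

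For $n_1 \neq n_2$, the last clause of Theorem \ref{thm:moments} provides only the upper bound $O(M \log(M)^{\min(n_1,n_2)l(v)})$ on the unnormalized sum. The same Abel summation, with normalization exponent $l(v)(n_1+n_2)/2 > \min(n_1,n_2)l(v)$, produces a quantity of size $O(M \log(M)^{-|n_1-n_2|l(v)/2})$; dividing by $\#(\G_\a\backslash\G)(M) \asymp M$ sends this to zero, matching the vanishing of $m_{n_1,n_2}(v)$. The argument for $Y_M^v$ is structurally identical, using the second asymptotic in Theorem \ref{thm:moments} with the $1/\pi$ factor cancelling against the corresponding $1/\pi$ in $\#T_{\a\b}(M)$.

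The main technical point requiring care is the Abel-summation bookkeeping: one must truncate away the (finite, bounded-in-$M$) number of $\g$ for which $\log t(\g)$ is small or non-positive---where the normalization in $X_M^v$ would not even be well-defined---and verify that they contribute only an $O(1)$ error that is harmless after division by $\#(\G_\a\backslash\G)(M) \sim M/\vol{\GmodH}$. Beyond this minor bookkeeping, the proof reduces to tracking how the factors $(\vol{\GmodH}/4)^{nl(v)}$, the $\vol{\GmodH}$ in the counting denominator, and the denominator $\vol{\GmodH}^{nl(v)+1}$ in $B(u_1,u_2)$ combine to produce $m_{n,n}(v)$.
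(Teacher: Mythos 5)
Your proposal is correct and is essentially the detailed version of the paper's own one-line proof, which simply says ``This follows from Theorem~\ref{thm:moments} and summation by parts.'' You have correctly identified the role of Abel summation in converting the uniform $\log^{nl(v)}M$ factor in the asymptotics of Theorem~\ref{thm:moments} into the pointwise $\log(t(\g))^{nl(v)}$ normalization appearing in $X_M^v$ and $Y_M^v$, correctly used the $(v,w)=(\e,\e)$ case of Theorem~\ref{thm:asymptotics} for the counting asymptotics of $\#(\G_\a\backslash\G)(M)$ and $\#T_{\a\b}(M)$, and correctly traced through the cancellation of the $4^{nl(v)}$, $\vol{\GmodH}^{\pm\cdots}$ and (for $Y_M^v$) the $1/\pi$ factors to recover $m_{n,n}(v)$; the off-diagonal case $n_1\neq n_2$ and the finite-truncation caveat for small $\log t(\g)$ are also handled appropriately.
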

\begin{proof}
This follows from Theorem \ref{thm:moments} and summation by parts. 
\end{proof}

\section{Limits of complex random variables} 
\label{sec:probability}
In this section we state and prove a Fr\'echet-Shohat-type theorem for complex random variables. This might be known to experts in the field, but we were unable to find a reference in the literature.

A complex random variable $Z$ with probability distribution  $\mu$ is said to be rotationally invariant if \begin{equation}
    \int_\C f(e^{i\theta} z)d\mu(z)=\int_\C f(z)d\mu(z)
\end{equation}
for every $f\in L^{1}(\mu)$ and $\theta\in \R$. Note 
that the density of such a distribution - if it exists - is a function of $\abs{z}$. 

\begin{thm}\label{frechet-shohat}
Let $Z_n$ be complex random variables with probability distribution $\mu_n$. Assume that these all admit finite complex moments
\begin{equation}
    m_{n_1,n_2}^{(n)}:=\int_\C z^{n_1}\overline{z}^{n_2}d\mu_n(z), \quad n_1n_2\in \N_{0}
\end{equation}
Assume that 
\begin{enumerate}
    \item for all $n_1,n_2\in \N_{0}$ we have $m_{n_1,n_2}^{(n)}\to m_{n_1,n_2}\in \C$,
    \item if $m_{n_1,n_2}\neq 0$ then $n_1=n_2$.
\end{enumerate}
Then there exist a rotationally invariant complex random variable $Z$ with complex moment sequence $s=\{m_{n_1,n_2}\}_{n_1, n_2\in \N_{0}}.$

Assume further that 
\begin{enumerate}[resume]
    \item \label{sufficient-convergence}at least one of the marginal random variables $aX+bY$ of $Z$,  $a^2+b^2=1$, (and hence all of them) are uniquely determined by its moments. 
\end{enumerate}
Then $Z_n\to Z$ in distribution as $n\to\infty$, and $\mathbb P(Z_n\in B)\to \mathbb{P}(Z\in B)$ for any $Z$-continuity set  $B\subseteq \C$.
\end{thm}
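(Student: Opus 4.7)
The plan is to construct the limit $Z$ explicitly in polar form from the diagonal moments $(m_{k,k})_{k\geq 0}$, and then reduce convergence in distribution to the classical one-dimensional Fr\'echet-Shohat theorem via the Cram\'er-Wold device. For the construction, observe that for each $n$ the sequence $(m_{k,k}^{(n)})_{k\geq 0}$ is the moment sequence of the nonnegative real random variable $\abs{Z_n}^2$, and hence is a Stieltjes moment sequence. The defining conditions of a Stieltjes moment sequence --- positive semi-definiteness of the two families of Hankel matrices $(s_{i+j})$ and $(s_{i+j+1})$ --- are closed under pointwise limits, so $(m_{k,k})_{k\geq 0}$ is itself a Stieltjes moment sequence. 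Pick a nonnegative random variable $T$ with $\E[T^k] = m_{k,k}$ on some probability space, let $\Theta$ be uniform on $[0, 2\pi)$ independent of $T$, and set $Z \coloneqq \sqrt{T}\,e^{i\Theta}$. The law of $Z$ is manifestly rotationally invariant, and the direct computation
\begin{equation*}
\E[Z^{n_1}\bar Z^{n_2}] \;=\; \E[T^{(n_1+n_2)/2}]\,\E[e^{i(n_1-n_2)\Theta}],
\end{equation*}
combined with hypothesis (2), shows that its complex moments are exactly $(m_{n_1,n_2})$.

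For the second assertion, under hypothesis (3) I invoke Cram\'er-Wold: writing $Z_n = X_n + iY_n$ and $Z = X + iY$, the desired $Z_n \to Z$ in distribution is equivalent to $aX_n + bY_n \to aX + bY$ in distribution for every $(a,b)\in \R^2$. Since $X = (Z+\bar Z)/2$ and $Y = (Z-\bar Z)/(2i)$, each real moment $\E[(aX_n + bY_n)^k]$ is a fixed linear combination of $\{m_{n_1,n_2}^{(n)}\}_{n_1+n_2 = k}$ and hence converges to the same combination of the $m_{n_1,n_2}$, i.e.\ to $\E[(aX+bY)^k]$. Rotational invariance of $Z$ makes all unit marginals of $(X,Y)$ equidistributed, so the moment-determinacy assumed in (3) for one marginal transfers to every $(a,b)$ with $a^2+b^2=1$, and then to every nonzero $(a,b)$ since scaling preserves moment-determinacy. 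The classical (real) Fr\'echet-Shohat theorem then yields $aX_n + bY_n \to aX + bY$ in distribution, Cram\'er-Wold yields $Z_n \to Z$ in distribution, and the Portmanteau theorem upgrades this to $\mathbb P(Z_n \in B) \to \mathbb P(Z \in B)$ for every $Z$-continuity set $B \subseteq \C$.

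The only mildly delicate point is the closure of the class of Stieltjes moment sequences under pointwise limits in the first step, which I would justify through the Hankel-matrix criterion sketched above; everything else is a bookkeeping exercise on top of standard probabilistic machinery.
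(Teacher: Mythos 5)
Your proof is correct, and your construction of the limit $Z$ takes a genuinely different route from the paper's. For the convergence-in-distribution half, you and the paper argue identically: Cram\'er--Wold reduces to scalar marginals, rotational invariance makes all unit marginals of $Z$ equidistributed so that determinacy of one transfers to all, the classical Fr\'echet--Shohat theorem gives marginal convergence, and Portmanteau finishes. For the existence half, the paper passes to the bivariate real picture, checks that the limiting real moment sequence is positive definite (being a pointwise limit of moment sequences) and that the associated linear functional on polynomials is rotation invariant (using hypothesis (2)), and then invokes Berg--Thill \cite[Prop.\ 2.2]{BergThill:1991} on rotation-invariant moment problems to produce a representing measure. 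You instead build $Z=\sqrt{T}\,e^{i\Theta}$ directly in polar form, with $T\geq 0$ solving the one-dimensional Stieltjes moment problem for the diagonal sequence $(m_{k,k})_k$ and $\Theta$ independent uniform; rotational invariance is then automatic, the off-diagonal complex moments vanish by independence, and hypothesis (2) guarantees this matches $m_{n_1,n_2}$. Your justification that $(m_{k,k})_k$ is a Stieltjes sequence --- it is the pointwise limit of the Stieltjes sequences $(\E[\abs{Z_n}^{2k}])_k$, and positive semi-definiteness of the two families of Hankel matrices $(s_{i+j})$, $(s_{i+j+1})$ is preserved under pointwise limits --- is sound. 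The trade-off: your construction replaces the specialised Berg--Thill rotation-invariant moment result with the classical one-dimensional Stieltjes theorem and makes rotational invariance of $Z$ manifest by design, so it is arguably more self-contained; it does require the (easy, via H\"older/Lyapunov interpolation) observation that $T$ has finite half-integer moments so that the factorisation $\E[T^{(n_1+n_2)/2}e^{i(n_1-n_2)\Theta}]=\E[T^{(n_1+n_2)/2}]\,\E[e^{i(n_1-n_2)\Theta}]$ is legitimate when $n_1+n_2$ is odd. Both routes are valid.
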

\begin{proof}
Clearly the set of complex random variables  $Z$ may be examined by considering the corresponding 2-dimensional real random variables $Z'$ via the bijection $\psi:\C\to\R^2$, $z=x+iy\mapsto (x,y)$: $Z'$ has positive Radon measure $\mu'$ on $\R^2$ given by $\mu'(A)=\mu(\psi^{-1}(A))$ for any open set $A\subseteq \R^{2}$.  The complex moments of $Z$ determine and are determined by the real moments 
\begin{equation}s_{m_1,m_2}=\int_{\R^2}x^{m_1}y^{m_2}d\mu'(x,y)
\end{equation}
through $z=x+iy$, $x=(z+\overline z)/2$, $y=(z-\overline z)/2i$. 
We observe that $\mu$ is rotationally invariant if and only if $\mu'$ is rotationally invariant in the sense that for any $(a,b)\in \R^2$ of norm 1 we have, for all open sets $A\in \R^2$ that  $\mu'(RA
)=\mu'(A
)$ where $R=\begin{pmatrix}a &b \\-b &a
\end{pmatrix}$.

The real random variables $Z'_{n}$ have real moments
\begin{equation}
    s_{m_1,m_2}^{(n)}=\int_{\R^2}x^{m_1}y^{m_2}d\mu_{n}'(x,y)
\end{equation}
Since these are finite linear combinations of the complex moments we have that for all $m_1,m_2\in \N_0$
\begin{equation}
    s_{m_1,m_2}^{(n)}\to s_{m_1,m_2}\in \R   
\end{equation}
We wish to show that $s=\{s_{m_1,m_2}\}$ is the moment sequence of a rotationally invariant random variable $Z'$. By a result of Berg and Thill \cite[Prop 2.2]{BergThill:1991} this is the case if i) $s$ is positive definite and ii) the $\C$-linear functional $L_{s}:\C[x,y]\to \C$ given by $L_s(x^{m_1}y^{m_2})=s_{m_1,m_2}$ is rotation invariant in the sense that $L_s(p\circ R)=L_s(p)$ for all polynomials and $R$ as above. 

To see that $s$ is positive definite (meaning that for any finite set $l^{(1)}, \ldots l^{(r)}\in \N_0^{2}$ the real matrix $(s_{l^{(i)}+l^{(j)}})_{i,j=1}^r$ is positive semidefinite) we note that this follows from the fact that $s$ is the limit of $s^{(n)}=\{s^{(n)}_{m_1,m_2}\}$ which is positive definite since it is a moment sequence: That $s^{(n)}$ is positive definite is seen by noticing that for any $\lambda\in \R^r$ we have  
\begin{align}
\lambda^T\left(s^{(n)}_{l^{(i)}+l^{(j)}}\right)_{i,j=1\ldots r}\lambda &=\sum_{i,j=1}^r \lambda_i s^{(n)}_{l^{(i)}+l^{(j)}}\lambda_j
\\
&=\sum_{i,j=1}^r \lambda_i \int_{\R^2} x^{l_1^{(i)}+l_1^{(j)}}y^{l_2^{(i)}+l_2^{(j)}}d\mu'_n(x,y)\lambda_j\\
=&
 \int_{\R^2}\left(\sum_{i}^r \lambda_i x^{l_1^{(i)}}y^{l_2^{(i)}}\right)^2d\mu'_n(x,y)\geq 0.
\end{align}
By letting $n\to\infty$  we see that $\lambda^T\left(s_{l^{(i)}+l^{(j)}}\right)_{i,j=1\ldots r}\lambda\geq 0$ so $s$ is positive definite.

To see that $L_s$ is rotation invariant it suffices to show that $L_s((ax+by)^{m_1}(-bx+ay)^{m_2})$ is independent of $(a, b)\in \R^2$ with norm 1. Note that by linearity we have $L_s(z^{n_1}\overline z^{n_2})=m_{n_1,n_2}$. 
Writing \begin{align}
    ax+by&=\frac{1}{2}(\gamma z+\overline \gamma \overline z)\\
    -bx+ay&=\frac{1}{2i}(\gamma z-\overline \gamma \overline z)
\end{align} with $\g=a-ib$ we find
\begin{align}
    2^{m_1}&(2i)^{m_2}L_s((ax+by)^{m_1}(-bx+ay)^{m_2})=L_s\left(\left(\gamma z+\overline{\gamma}\overline z\right)^{m_1}\left(\gamma z-\overline \gamma \overline z\right)^{m_2}\right)\\
    &=\sum_{\substack{j_1=0\ldots m_1\\
    j_2=0\ldots m_2}}\binom{m_1}{j_1}\binom{m_2}{j_2}\g^{j_1+j_2}\overline{\g}^{m_1+m_2-(j_1+j_2)}(-1)^{m_2-j_2}m_{j_1+j_2,m_1+m_2-(j_1+j_2)}\\
    \intertext{Since $m_{l_1,l_2}=0$ unless $l_1=l_2$ the sum reduces to terms with $j_1+j_2=(m_1+m_2-(j_1+j_2))$ and for such terms $\g^{j_1+j_2}\overline{\g}^{m_1+m_2-(j_1+j_2)}=1$ and we find}
    &=\sum_{\substack{j_1=0\ldots m_1\\
    j_2=0\ldots m_2\\2(j_1+j_2)=(m_1+m_2)}}\binom{m_1}{j_1}\binom{m_2}{j_2}(-1)^{m_2-j_2}m_{j_1+j_2,m_1+m_2-(j_1+j_2)}
\end{align}
which is independent of $(a,b)$. We conclude that $L_s$ is rotation invariant, and \cite[Prop 2.2]{BergThill:1991} allows us to conclude the existence of a 2-dimensional rotationally invariant random variable $Z'$ which has $s_{m_1,m_2}$ as its  moments. It follows that the corresponding rotational invariant complex random variable $Z$ has $m_{n_1,n_2}$ as its complex moments. This proves the first claim of the theorem.

To prove the second claim recall that the Cram\'er-Wold theorem \cite[Thm 29.4]{Billingsley:1995a} states that $Z_n'=(X_n,Y_n)\to Z'=(X,Y)$ in distribution if and only if for any $(a,b)\in \R^2$ with norm 1 we have $aX_n+yY_n\to aX+bY$ in distribution. Since $Z'$ is rotationally invariant 
\begin{equation}
    \int_{R^2}(ax+by)^nd\mu_Z'(x,y)=\int_{R^2}x^nd\mu_Z'(x,y)  
\end{equation}
so all the marginal distributions have the same moments. If one (and therefore all) of the marginal distributions $aX+bY$ has moments which uniquely determine this 1-dimensional distribution then, since by construction 
the moments of $aX_n+bY_n$ converges to the moments of $aX+bY$ and all moments are finite,  the classical Frech\'et-Shohat theorem  \cite[Sec 11.4 C]{Loeve:1977a} gives that $aX_n+bY_n\to aX+bY$ in distribution. By  Cram\'er-Wold this implies that $Z_n'\to Z'$ or equivalently that $Z_n\to Z.$ The last statement follows from the Portmanteau theorem \cite[Thm 2.1]{Billingsley:1999}.
\end{proof}

\begin{rem}In light of Theorem \ref{frechet-shohat} \eqref{sufficient-convergence} it is convenient to have a condition which ensures that the marginal is determined by a given set of moments. The Carleman condition provides this: Consider a complex random variable $Z$ with complex moments \begin{equation}
    m_{m_1,m_1}=\int_\C z^{m_1}\overline z^{m_2}d\mu(z).\end{equation} Then by Cauchy-Schwarz the marginal $(aX+bY)$ has moments bounded as follows
\begin{equation}
    m_{2k}=\int_\C (ax+by)^{2k} d\mu(z)\leq \norm{(a,b)}^{2k}\int_\C {z^k\overline z^k}d\mu(z)=m_{k,k}
\end{equation}
By the classical Carleman condition $aX+bY$ is determined by these moments provided that 
\begin{equation}\sum_{k=1}^\infty\frac{1}{m_{2k}^{\frac{1}{2k}}}=\infty\end{equation}
It follows that if the complex moments satisfies 
\begin{equation}\label{carleman}\sum_{k=1}^\infty\frac{1}{m_{k,k}^{\frac{1}{2k}}}=\infty\end{equation}
then \emph{all} the marginal distributions are determined, and by the Cram\'er-Wold theorem the complex distribution is determined by its moments.
\end{rem}
\begin{rem}\label{rem:uniqueness-condition}
Assume that a complex distribution $Z$ has complex moments satisfying \begin{equation}m_{k,k}\leq C (2k)!\end{equation} for some $C>0$. By Stirling's formula we have  
\begin{equation}m_{k,k}^{\frac{1}{2k}}\leq C^{\frac{1}{2k}} ((2k)!)^{\frac{1}{2k}}\asymp (2\pi2k)^{\frac{1}{4k}}\left(\frac{2k}{e}\right)^{\frac{2k}{2k}}\asymp k\end{equation} so the Carleman condition \eqref{carleman} is satisfied and $Z$ is determined by its moments. 
\end{rem}
\begin{rem}\label{rem:kotz-like}
Consider the Kotz-like distribution function \cite[Eq. 7]{KleiberStoyanov:2013} \begin{equation}\label{kotz-distribution}f\textsubscript{Kotz,$l$}(z)=\frac{(l!)^2}{l\pi}e^{-\abs{l!z}^{\frac2 l}}\abs{l!z}^{2(\frac 1 l-1)}.\end{equation}  The corresponding random variable  is rotationally invariant and the complex moments may be easily computed to be  
\begin{equation}m_{k_1,k_2}=\delta_{k_1=k_2}\frac{(lk_1)!}{(l!)^{2k_1}}.
\end{equation}
For $l=1,2$ the above analysis shows that the corresponding random variable is determined by its moments. For $l=1$ it is the standard complex normal distribution.  For $l>2$ the Carleman condition \eqref{carleman} is not satisfied and we cannot a priori make any conclusion. In fact the Kotz-like distribution is \emph{indetermined} when $l\geq 3$, i.e. it is not determined by its moments as shown in \cite[Cor. 8]{KleiberStoyanov:2013} and there are infinitely many distributions with the same moments. 
\end{rem}

\section{Distributions of noncommutative modular symbols}
\label{sec:distribution}
In the last section we found general conditions to ensure that asymptotic moments determine an asymptotic distribution. In this section we analyze this in the concrete situation of the random variables $X_M^v$ and $Y_M^v$.  

\begin{thm}\label{final-convergence-result}
Let $\e\neq v\in V^*$. 
\begin{enumerate}
    \item 
If $v=f_1dz$ has length 1 with $\norm{f}=1$ then $X_M^v$, $Y_M^v$ converges in distribution to the standard  complex normal distribution with distribution function $f(z)=\frac{1}{\pi}e^{-\abs{z}^2}$.
    
\item\label{final-convergence-length-two} If $v=f_1dzf_2dz$ has length 2 then $X_M^v$, $Y_M^v$ converges in distribution to a radially symmetric distribution $X(v)$. The distribution $X(v)$  depends only on $\{\inprod{yf_i}{yf_j}\}_{i,j=1}^2$.  
 
If $f_1=f_2$ with $\norm{f_i}=1$ then $X(v)$ is the  Kotz-like distribution with distribution function $\frac{1}{\pi\abs{z}}e^{-2\abs{z}}$.
\item If $v$ has length 3 or higher all asymptotic moments of $X_M^v$, $Y_M^v$ exists and there exists at least one radially symmetric distribution with these as its moments.

\end{enumerate}
\end{thm}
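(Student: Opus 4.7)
The plan is to combine Theorem \ref{thm:finalmoments}, which furnishes the asymptotic complex moments $m_{n_1, n_2}(v)$ of both $X_M^v$ and $Y_M^v$, with the complex Fr\'echet--Shohat theorem (Theorem \ref{frechet-shohat}). Since $m_{n_1, n_2}(v)$ vanishes unless $n_1 = n_2$, hypotheses (1) and (2) of Theorem \ref{frechet-shohat} hold automatically in all three cases, producing a rotationally invariant complex random variable whose complex moments agree with the asymptotic ones; this alone yields part (3). For parts (1) and (2) the remaining work is to verify the uniqueness hypothesis (3) of Theorem \ref{frechet-shohat} via the Carleman criterion of Remark \ref{rem:uniqueness-condition}, and in the special $f_1 = f_2$ subcase of part (2) to compute $m_{n,n}(v)$ explicitly and match it against the Kotz-like distribution of Remark \ref{rem:kotz-like}.

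For part (1), the identity $I_a^b(v_1)^n = n!\, I_a^b(v_1^n)$ recorded immediately after \eqref{eq:lower-bounds-coefficients} shows that the only word appearing in $v^{\shuffle n}$ is $v^n$, with coefficient $n!$. Using $\inprod{yf_1}{yf_1} = \norm{f_1}^2 = 1$, this yields
\begin{equation*}
m_{n,n}(v) = \frac{1}{n!}(n!)^2 \inprod{yf_1}{yf_1}^n = n!,
\end{equation*}
which are exactly the complex moments of the standard complex normal distribution $\frac{1}{\pi}e^{-\abs{z}^2}$. Since $(n!)^{1/(2n)} \asymp \sqrt{n}$, the Carleman series diverges, the limit is uniquely determined, and part (1) then follows from Theorem \ref{frechet-shohat}.

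For part (2) I combine the Cauchy--Schwarz bound $\abs{\inprod{yf_{v_i}}{yf_{w_j}}} \leq \norm{yf_{v_i}}\norm{yf_{w_j}}$ with the shuffle-count identity $\sum_u c_{v^{\shuffle n}}(u) = (2n)!/2^n$ from \eqref{eq:sum-coefficients} to obtain
\begin{equation*}
\abs{m_{n,n}(v)} \leq \frac{1}{(2n)!}\left(\frac{(2n)!}{2^n}\right)^2 M^{2n} = \frac{(2n)!}{4^n}M^{2n},
\end{equation*}
where $M = \max_{i,j}\norm{yf_i}\norm{yf_j}$. Hence $m_{n,n}(v)^{1/(2n)} = O(n)$, Remark \ref{rem:uniqueness-condition} applies, and the limit is uniquely determined. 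Because $m_{n,n}(v)$ is built entirely from the Gram matrix $\{\inprod{yf_i}{yf_j}\}_{i,j=1}^2$, so is the limit distribution. In the subcase $f_1 = f_2$ with $\norm{f_1} = 1$, the shuffle power $v^{\shuffle n}$ reduces to the single word $f_1^{2n}$ with coefficient $(2n)!/2^n$, giving $m_{n,n}(v) = (2n)!/4^n$. These are precisely the complex moments of the Kotz-like distribution at $l = 2$ in Remark \ref{rem:kotz-like}, whose density simplifies to $\frac{1}{\pi\abs{z}}e^{-2\abs{z}}$.

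The main obstacle lies in part (3). Combining the upper bound \eqref{eq:sum-coefficients} with the lower bound \eqref{eq:lower-bounds-coefficients} one finds $m_{n,n}(v)^{1/(2n)} \asymp n^{l(v)/2}$, so the Carleman sum $\sum_k m_{k,k}(v)^{-1/(2k)}$ converges whenever $l(v) \geq 3$ and hypothesis (3) of Theorem \ref{frechet-shohat} genuinely fails by this route. Remark \ref{rem:kotz-like} makes this obstruction concrete: already for $l = 3$, taking for instance $v = f_1 dz\, f_1 dz\, f_1 dz$ with $\norm{f_1} = 1$, the sequence $m_{n,n}(v)$ coincides with the moment sequence of the indeterminate Kotz-like distribution. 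Consequently one can only assert the \emph{existence} of some rotationally invariant distribution carrying the asymptotic moments, without uniqueness, which is exactly the content of part (3).
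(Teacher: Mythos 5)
Your proposal is correct and follows essentially the same route as the paper: extract the asymptotic complex moments from Theorem \ref{thm:finalmoments}, feed them into the complex Fr\'echet--Shohat theorem (Theorem \ref{frechet-shohat}), use the Carleman criterion (via the computation in Remark \ref{rem:uniqueness-condition}) to get uniqueness in lengths 1 and 2, and match the explicit moment formulas $n!$ and $(2n)!/4^n$ against the standard complex normal and the $l=2$ Kotz-like densities respectively. The only nit, shared with the paper's own write-up, is that your length-2 bound $m_{n,n}\le (2n)!\,(M/2)^{2n}$ has an exponential factor not literally covered by the hypothesis of Remark \ref{rem:uniqueness-condition}; the Stirling estimate there still gives $m_{n,n}^{1/(2n)}=O(n)$, so Carleman divergence follows as you note, but it is cleaner to say the \emph{argument} of the remark applies rather than the remark itself.
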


\begin{proof}
We start by examining the \lq classical\rq{} case of length 1 i.e. $v=f(z)dz$, with $\norm{f}=1$.  By Theorem \ref{thm:finalmoments} we see that the complex moments of $X_M^v$ and $Y_M^v$ are asymptotically equal to
\begin{align}
    m_{n,m}=\frac{\delta_{m=n}}{n!}\left(\sum_{\substack{u_1\in V^*
    }} c_{v^{\shuffle n}}(u)\right)^2 =n! 
\end{align}
where we have used \eqref{eq:sum-coefficients}. These moments are therefore equal to those of the standard complex normal distribution function $\frac{1}{\pi}e^{-\abs{z}^2}$. Remark \ref{rem:uniqueness-condition} gives that this distribution is determined by its complex moments, and hence Theorem \ref{frechet-shohat} gives that $X_M^v$ and $Y_M^v$ converges in distribution to the standard normal complex distribution. After adjusting for different normalizations this recovers \cite[Thm 5.1]{PetridisRisager:2004a} and
\cite[Cor 7.8]{PetridisRisager:2018a} with $I=\R/\Z$. Hence we have also reproved Mazur and Rubins conjecture on the normal distribution of modular symbols.  

Moving to $v=f_1dzf_2dz$ of length two we find from Theorem \ref{thm:finalmoments} that the complex moments of $X_M^v$ and $Y_M^v$ are asymptotically equal to
\begin{align} m_{n_1,n_2}(v)&=\frac{\delta_{n_1=n_2}}{(2n_1)!}\sum_{\substack{u_1,u_2\in V^*\\ l(u_i)=2n_1}} 
c_{v^{\shuffle n_1}}(u_1)c_{v^{\shuffle n_1}}(u_2) 
\prod_{i=1}^{2n_1}
\inprod{yf_{u_{1,i}}}{yf_{u_{2,i}}}\\
&\leq \max{(\norm{f_1}^2,\norm{f_2}^2)}{\frac{(2n_1)!}{(2!)^{2n_1}}}\leq C_{f_1,f_2} (2n_1)!
\end{align}
Here we have used that $u_i$ has to be a word in $f_1dz$, $f_2dz$ in order for $c_{v^{\shuffle n_1}}(u_1)$ to be non-zero.

Remark \ref{rem:uniqueness-condition} gives that these moments determines a unique distribution $X(v)$, and since the moments only depend on $\{\inprod{yf_i}{yf_j}\}_{i,j=1}^2$ so does the distribution. We may conclude from Theorem \ref{thm:finalmoments} that $X_M^v$ and $Y_M^v$ converges to $X(v)$ in distribution.

In the special case $v=f_1dzf_2dz$ where $f_1=f_2$ with $\norm{f_i}=1$ we find using \eqref{eq:sum-coefficients} that 
\begin{equation}
    m_{n_1,n_2}(v)=\delta_{n_1=n_2}\frac{(2n_1)!}{(2!)^{2n_1}}.
\end{equation} 
Noticing that the Kotz-like distribution with distribution function 
\begin{equation}\frac{1}{\pi}e^{-2\abs{z}}\abs{z}^{-1}\end{equation}
is the unique distribution with these moments proves the claim.

Finally we move to the length $l\geq 3$ case, where Theorem \ref{thm:finalmoments} gives that all the complex asymptotic moments of $X_M^v$ and $Y_M^v$ exist and that there exist a rotationally invariant distribution with these as its moments. 
\end{proof}

\begin{rem} 
If we consider the \lq trivial \rq{} length $l$ case $v=(f(z)dz)^l$ with $\norm{f}=1$ we see using \eqref{eq:sum-coefficients} that
the asymptotic moments of $X_M^v$ and $Y_M^v$ equal 
\begin{equation}
    m_{n_1,n_2}(v)=\delta_{n_1=n_2}\frac{(ln_1)!}{(l!)^{2n_1}}.
\end{equation}
This equals the Kotz-like distribution from Remark \ref{rem:kotz-like} which is indetermined when $l\geq 3$. This is probably related to the fact that if $X$ has normal distribution then $X^n$ is indeterminate for $n\geq 3$ (See \cite{Berg:1988}). 

Since the Kotz-like distribution is indetermined for $l\geq$ Theorem \ref{frechet-shohat} \eqref{sufficient-convergence} does not allow us to conclude convergence in distribution. However using the  Portmanteau theorem (\cite[Thm 2.1]{Billingsley:1999}) we see that if $Z_M$ converges in distribution to a random variable $Z$ then for any continuous function $f$ on $\C$ we have that $f(Z_M)$ converges in distribution to $f(Z)$. In particular we find, using \eqref{powers} that when $v=(f(z)dz)^l$ we have that $X_M^v$ and $Y_M^v$ converges in distribution to $\frac{Z^l}{l!}$ where $Z$ is the standard normal distribution on $\C$. Note that 
\begin{align}\mathbb E(g(\frac{Z^l}{l!}))&=\int_{\R^2}{g(\frac{z^l}{l!})}\frac{e^{-\abs{z}^2}}{\pi}dxdy\\
&=\int_{0}^{2\pi}\int_0^\infty g(\frac{r^l}{l!}e^{il\theta})\frac{e^{-r^2}}{\pi}rdrd\theta \\
&=\int_{0}^{2\pi}\int_0^\infty g(re^{i\theta)}\frac{(l!)^2}{\pi l}\frac{e^{-(l!r)^{2/l}}}{(l!r)^{2(1-1/l)}} rdrd\theta=\int_{\R^2}g(z)f\textsubscript{Kotz,$l$}(z)dxdy
\end{align}
\end{rem}
 It follows that in this case $X_M^v$ and $Y_M^v$ indeed converges in distribution to Kotz-like distribution with distribution function $f\textsubscript{Kotz,$l$}(z)$ \eqref{kotz-distribution}.

\begin{rem}
More generally, if we assume $\inprod{yf_{v_i}}{yf_{v_j}}\geq 0$ and $\norm{f_{v_i}}=1$ we find from \eqref{eq:lower-bounds-coefficients}  the following lower bound
\begin{align}
    m_{n_1,n_2}(v)=\sum_{\substack{u_1,u_2\in V^*\\ l(u_i)=nl(v)}}& c_{v^{\shuffle n}}(u_1)c_{v^{\shuffle n}}(u_2) \prod_{i=1}^{nl(v)}\inprod{yf_{u_{1,i}}}{yf_{u_{2,i}}}\\ &\geq[c_{v^{\shuffle n}}((f_{v_1}(z)dz)^n)\cdots(f_{v_l(v)}(z)dz)^n)]^2=((n!)^{l(v)})^2, 
\end{align}
and we find 
\begin{equation}
    m_{n,n}(v)\geq \frac{(n!)^{2l(v)}}{(l(v)n)!}\geq cn^{l/2}
\end{equation}
where we have used Stirlings formula for the factorials. This shows that if $l\geq 3$ then the Carleman condition \eqref{carleman} is not satisfied if $\inprod{yf_{v_i}}{yf_{v_j}}\geq 0$ and $\norm{f_{v_i}}=1$. So under these assumptions $X(v)$ may be indetermined.
\end{rem}

\begin{example}\label{fun-example}
The distributions $X(v)$ in Theorem \ref{final-convergence-result}, \eqref{final-convergence-length-two} can in some cases be described explicitly. For instance if we choose two \emph{orthonormal} weight 2 cusp forms $f_1$, $f_{-1}$ and let $v=v_1v_{-1}=f_1dzf_{-1}dz$, then Theorems \ref{thm:finalmoments} and \ref{final-convergence-result} give that $X(v)$ is the unique distribution with moments
\begin{equation}m_{n_1,n_2}(v)=\frac{\delta_{n_1=n_2}}{(2n_1)!}\sum_{\substack{u_1 \in V^*}} 
c_{v^{\shuffle n_1}}^2(u_1).\end{equation}
We claim that for $v=v_1v_{-1}$ we have the combinatorial identity
\begin{equation}\label{combinatorial-identity}
    \sum_{\substack{u \in V^*}} 
c_{v^{\shuffle n}}^2(u)=n!^2\left(\frac{1}{\cos(x)}\right)^{(2n)}(0).
\end{equation}
Therefore the complex diagonal moments are given by
\begin{equation}\label{curious-observation}
    m_{n,n}(v)=\binom{2n}{n}^{-1}\left(\frac{1}{\cos(x)}\right)^{(2n)}(0)
\end{equation}

The distribution function $f(z)=h(\abs{z})$ of $X(v)$ satisfies \begin{equation}
\int_{-\infty}^{\infty}h(\abs{r})\abs{r}r^{n}dr=\begin{cases}\frac{1}{\pi}m_{l,l}&\textrm{if }n=2l\\
0&\textrm{otherwise.}
\end{cases}
\end{equation}
such that $\tilde h(r)=h(\abs{r})\abs{r}$ has (two-sided) Laplace transform \begin{equation}\mathscr L (\tilde h)(s)=\frac{1}{\pi}\sum_{n=0}^\infty \frac{s^{2n}}{(2n)!}m_{n,n}
\end{equation}
We can now use $\binom{2n}{n}^{-1}=(2n+1)\int_{0}^1(y(1-y))^{n}dy$ (which follows from the well-known properties of the beta function) to see that  
\begin{equation}\mathscr L (\tilde h)(s)=\frac{1}{\pi}\int_{0}^1g(s\sqrt{y(1-y)})dy
\end{equation} where $g(s)=\frac{d}{ds}(\frac{s}{cos(s)}).$ Applying the inverse Laplace transform and $\mathscr L (\frac{1}{\cosh(r)})(s)=\frac{\pi}{\cos(\pi s/2)}$ we find
\begin{equation}
    h(r)=\frac{1}{4}\int_{0}^1\frac{1}{y(1-y)}\frac{\displaystyle\sinh\left(\frac{\pi r}{2\sqrt{y(1-y)}}\right)}{\cosh^2\left(\displaystyle\frac{\pi r}{2\sqrt{y(1-y)}}\right)}dy
\end{equation} for $r\geq 0$. So if \eqref{combinatorial-identity} holds as claimed, then 
$X(v)$ has distribution function $f(z)=h(\abs{z})$.

We verify \eqref{combinatorial-identity} as follows: 

Fra\c{c}on and Viennot \cite{FranconViennot:1979a} were able to precisely count the number of permutations of $[m]=\{1,2, \ldots, m\}$ of various types, where a type is given by the sets of values of  peaks, valleys, double rises and double falls. This allowed them to deduce various identities including \cite[Cor 4.2]{FranconViennot:1979a}
\begin{equation}\label{Dumont-identity}
    \left(\frac{1}{\cos(x)}\right)^{(2n)}(0)=\sum_{\g\in \mathscr{C}_{2n+1}'}S(\g).
\end{equation}
Here $\mathscr{C}_{m}'$ is the set of strict contractions of length $m$, i.e. maps $\g:[m]\to[m]$  satisfying \begin{enumerate}
\item $\g(1)=\g(m)=1$,
\item $\abs{\g(i+1)-\g(i)}=1\textrm{ for }i=1, \ldots m-1$.
\end{enumerate}
and $S$ is defined by \begin{equation}
    S(\g)=\prod_{i=1}^{m-1      }\min (\g(i), \g(i+1)).
\end{equation}

We will show that \eqref{combinatorial-identity} follows from \eqref{Dumont-identity}.
Recall that the set of length $2n$ ballot sequences $B_{2n}$ is the set of  $2n$-tuples $b=(b_1,b_2,\ldots,b_{2n})$ consisting of $n$ occurrences of 1 and $n$ occurrences of $-1$ and satisfying $\sum_{i\leq j}b_i\geq 0$ for $1\leq j\leq 2n$. We have a bijection between the set of strict contractions of odd length and the set of ballot sequences given by
\begin{equation}
\begin{array}{ccc} 
B_{2n}&\to& \mathscr{C}_{2n+1}'\\
 b &\mapsto & \g(i)=1+\sum_{j\leq i-1}b_j
\end{array}
\end{equation}
with inverse $\g\mapsto (\g(2)-\g(1), \ldots ,\g(2n+1)-\gamma(2n))$. This translates  \eqref{Dumont-identity} to 
\begin{equation}\label{massaged-Dumont}
   \left(\frac{1}{\cos(x)}\right)^{(2n)}(0)= \sum_{b\in B_n}\prod_{\substack{i=1\\b_i=-1}}^{2n}(1+\sum_{j\leq i}b_j)\prod_{\substack{i=1\\b_i=
    1}}^{2n}(1+\sum_{j\leq i-1}b_j)
\end{equation}
We now relate this to the shuffle coefficients $c_{v^{\shuffle n}}^2(u)$. The $2n$-words obtainable under $n$ shuffles of $v=v_1v_{-1}$ are precisely $v_b:=v_{b_1}v_{b_2}\cdots v_{b_{2n}}$ for $b\in B_{2n}$. So 
\begin{equation}
    v^{\shuffle n}=\sum_{b\in B_{2n}}c_{v^{\shuffle n}}(v_b)v_b.
\end{equation}
To compute $c_{v^{\shuffle n}}(v_b)$ we may first place the $n$ occurrences of $v_1$ at the $i$th positions with $b_i=1$ . This may be done in $n!$ ways. We then place (from left to right) the $n$ occurrences of $v_{-1}$. If some $v_{-1}$ should be placed on the $i$th position with $b_i=-1$ then there are $\sum_{j\leq i-1}b_j=1+\sum_{j\leq i}b_j$ ways of doing so. It follows that 
\begin{equation}
    c_{v^{\shuffle n}}(v_b)=n!\prod_{\substack{i=1\\b_i=-1}}^{2n}(1+\sum_{j\leq i}b_j).
\end{equation}
Alternatively we may first place the $n$ occurences of $v_{-1}$ at the $i$th positions with $b_i=-1$ which may be done in $n!$ ways. We then place (from right to left) the $n$ occurrences of $v_{1}$. If $v_1$ should be placed on the $i$th position with $b_i=1$ then there are $-\sum_{j> i}b_j=\sum_{j\leq  i}b_j=1+\sum_{j\leq  i-1}b_j$ ways of doing this. It follows that 
\begin{equation}
    c_{v^{\shuffle n}}(v_b)=n!\prod_{\substack{i=1\\b_i=1}}^{2n}(1+\sum_{j\leq i-1}b_j).
\end{equation}
Comparing these expressions we find that the right-hand-side of \eqref{massaged-Dumont} equals \begin{equation}\sum_{b\in B_{2n}} (c_{v^{\shuffle n}}(v_b)/n!)^2\end{equation} which proves \eqref{combinatorial-identity}.
\end{example}

\section{Special values of multiple \texorpdfstring{$L$}{L}-functions}
\label{sec:L-series}
In this section we explain how the noncommutative modular symbols 
$I_{i\infty}^{a/c}(v)$ 
may be interpreted as special values of multiple $L$-functions twisted by additive characters. For the case of ordinary modular symbols a similar analysis may be found in \cite[Sect. 3.3]{Nordentoft:2021c} where the multiple $L$-function is simply a standard Hecke $L$-function.  The additive twists for multiple $L$-functions which we develop may also be seen as $GL_2$ analogs of multiple polylogarithms (see e.g. \cite{Waldschmidt:2002}). 

Related analyses also using iterated Mellin transforms may be found in \cite{Manin:2006, ChoieIhara:2013, Brown:2019b}

\subsection{Continuation and functional equation of multiple \texorpdfstring{$L$}{L}-functions}
In this section we work with $\G=\G_0(q)$ such that both $0$ and $i\infty$ are cusps for $\Gamma$. For  $v=w_1w_2\ldots w_l$ with $w_i(z_i)=f_i(z_i)dz_i$ holomomorphic cuspidal 1-forms on $\GmodH$ we consider, for $s=(s_1,s_2,\ldots,s_l)$, the function 
\begin{equation}\label{integral-representation}
    I_{i\infty}^0(v,s)=\int_{i\infty}^0f_1(z_1)z_1^{s_1}\int_{i\infty}^{z_1}f_2(z_2)z_2^{s_2}\ldots\int_{i\infty}^{z_{l-1}}f_l(z_l)z_l^{s_l}\frac{dz_l}{z_l} \ldots \frac{dz_1}{z_1}
\end{equation}
We note that this since both $0$ and $i\infty$ are cusps for $\G$ the integral converges uniformly and absolutely for $s_i$ in any compac subset of $\C$ as long as we choose the line integral to only intersect finitely many fundamental domains. We note also that $I_{i\infty}^0(v)=I_{i\infty}^0(v,1)$ where 1 should be interpreted as the vector in $\C^n$ with each entry equal to 1.

In order to not make the notation to heavy we restrict in this section to length $l=2$, so that $v(z)=f_1(z_1)dz_1f_2(z_2)dz_2$ but it is straightforward to generalize all statements to general length. Almost all of the statements also have generalizations to general even weight $k$ cusp forms.

Denote the Fourier coefficients of $f_i$ by 
\begin{equation}
    f_i(z)=\sum_{n=1}^\infty a_i(n)e(nz),
\end{equation}
We consider the series
\begin{equation}\label{series-representation}
    L(f_1,f_2, s_1, s_2)=\sum_{n_1,n_2=1}^{\infty}\frac{a_1(n_1)a_2(n_2)}{(n_1+n_2)^{s_1}n_2^{s_2}}.
\end{equation}
By well-known bounds for the Fourier coefficients this sum converges absolutely for $\Re(s_1)> 2$, $\Re(s_2)\geq 1$. For $s_2=1$ this function may be analyzed using iterated integrals as follows (See also \cite[Sec. 2.1.3]{Manin:2006}\cite[Sec3.3]{Manin:2009}).

Inserting the Fourier expansions of $f_1$ and $f_2$ and setting $s_2=1$ we find 
\begin{equation}\label{integral-expression}
    I_{i\infty}^0(v,s_1,1)
    =\frac{i^{s_1+1}}{(2\pi)^{s_1+1}}\Gamma(s_1)L(f_1,f_2,s_1,1)
    \end{equation}
 There are similar but slightly more complicated expressions when $s_2$  is any positive integer. These can be proved by using formulas for the incomplete Gamma function. 
 
 Recall that the Fricke involution 
 \begin{equation}Wf_j(z)=\frac{1}{qz^2}f_j(-1/(qz))\end{equation} maps the space of weight $2$ cusp forms of $\Gamma_0(q)$ to itself. We write \begin{equation}Wv=Ww_1(z_1)Ww_2(z_2)\end{equation} where $Ww_i(z_i)=(Wf_i)(z_i)dz_i$. Writing \begin{equation}
     f_i(z_i)=(qz_i^2)^{-1}Wf_i(-1/(qz_i))
 \end{equation} and making a simple change of variables it is straightforward to verify the functional equation
 \begin{align}
     I_{i\infty}^0(v,s)=&(-1)^{1-{s_1+s_2}}q^{2-{(s_1+s_1)}}\\
     &\left(I_{i\infty}^0(Wv,2-s)-I_{i\infty}^0((Wf_1)(z)dz,2-s_1)I_{i\infty}^0((Wf_1)(z)dz,2-s_2)\right).
 \end{align}
 Note that in the length 1 case we have 
 \begin{align}
     I_{i\infty}^0(gdz,s)&=\int_{i\infty}^0 g(z)z^{s-1}dz\\ &=-i^s\frac{\Gamma(s)}{(2\pi)^s}L(g,s)=(-q)^{2-s}I_{i\infty}^0(Wg(z)dz ,2-s),
 \end{align}
where \begin{equation}\label{series-representation-II}L(g,s)=\sum_{n=1}^\infty\frac{a_g(n)}{n^s}.\end{equation} 
Recall that if $f_i$ are Hecke forms then we have $Wf_i=\epsilon_i \hat f_i$ with $\epsilon_i$ on the unit circle and $\hat f=\overline f(-\overline z)$ (which corresponds to conjugating the Fourier coefficients). See \cite[Sec 14.7]{IwaniecKowalski:2004a}.

Summarizing and specializing to $s_2=1$ we get the following theorem which extends a classical result due to Hecke to multiple $L$-functions:
\begin{thm}\label{thm:functional-eq-multiple-zeta} The functions $L(f_1,f_2, s_1, 1)$, $L(\hat f_1,\hat f_2, s_1, 1)$, $L(f_i, s_i)$, $L(\hat f_i, s_i)$ defined for $\Re(s_1)>2$ by \eqref{series-representation} and \eqref{series-representation-II} admit analytic continuation to $s\in \C$. If $f_1,f_2$ are Hecke eigenforms with $Wf_i=\epsilon_i \hat f_i$ the following functional equation holds: Let 
\begin{align}
    \Lambda(f_1,f_2,s)&=\left(\frac{q^{1/2}}{2\pi}\right)^{s+1}\Gamma(s)L(f_1,f_2,s,1)\\
    \Lambda(f_i,s)&=\left(\frac{q^{1/2}}{2\pi}\right)^{s}\Gamma(s)L(f_i,s)\\
\end{align} and similar for $\hat f_1$, $\hat f_2$. Then
\begin{align}
    \Lambda(f_i,s)&=-\epsilon_i\Lambda(\hat f_1,2-s),\\
    \Lambda(f_1,f_2,s)&=-\epsilon_1\epsilon_2\left(\Lambda(\hat f_1,\hat f_2,2-s)-\Lambda(\hat f_1,2-s)\Lambda(\hat f_2,1)\right).
\end{align}
\end{thm}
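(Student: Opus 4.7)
The plan is to deduce both the analytic continuation and the functional equation from the integral representation \eqref{integral-representation}, using that $f_1,f_2$ are cusp forms and that the Fricke involution preserves $S_2(\Gamma_0(q))$.

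\textbf{Step 1 (analytic continuation).} The starting point is the observation made just after \eqref{integral-representation} that, because $f_1,f_2$ decay exponentially near both cusps $0$ and $i\infty$, the iterated integral $I_{i\infty}^0(v,s)$ converges absolutely and uniformly on compact subsets of $\mathbb{C}^l$ (indeed the factors $z_j^{s_j}$ only contribute polynomial growth, which is dominated by the exponential decay of the $f_j$). Hence $I_{i\infty}^0(v,s)$ is entire in $s$. For the length $1$ case, the identity $I_{i\infty}^0(g\,dz,s) = -i^s \Gamma(s)(2\pi)^{-s} L(g,s)$ together with the fact that $1/\Gamma(s)$ is entire shows that $L(g,s)$ has analytic continuation to $\mathbb{C}$. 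For the length $2$ case the same argument, applied to \eqref{integral-expression}, gives analytic continuation of $L(f_1,f_2,s_1,1)$, and the corresponding statements for $\hat f_1,\hat f_2$ follow since the hypothesis $Wf_i = \epsilon_i\hat f_i$ and $W^2=1$ on $S_2(\Gamma_0(q))$ shows that $\hat f_i \in S_2(\Gamma_0(q))$.

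\textbf{Step 2 (functional equation, length $1$).} Apply the substitution $z \mapsto -1/(qz)$ to $I_{i\infty}^0(g\,dz,s) = \int_{i\infty}^0 g(z) z^{s-1}\,dz$. Since $-1/(qz)$ swaps $0$ and $i\infty$ (reversing the orientation of the path) and transforms $g(z)\,dz$ into $-q^{-1}z^{-2}(Wg)(-1/(qz))\cdot(q z^2)^{-1}\,dz$ up to the appropriate scalings, a direct change of variables yields $I_{i\infty}^0(g\,dz,s)=(-q)^{2-s}I_{i\infty}^0(Wg\,dz,2-s)$, which is the identity already recorded in the excerpt. Converting to $L$-functions via the length-$1$ identity and using $Wf_i = \epsilon_i \hat f_i$ then gives the claimed $\Lambda(f_i,s) = -\epsilon_i \Lambda(\hat f_i, 2-s)$ after collecting the factors $(q^{1/2}/(2\pi))^s\Gamma(s)$ into $\Lambda$.

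\textbf{Step 3 (functional equation, length $2$).} Specialise the length-$2$ functional equation for $I_{i\infty}^0(v,s_1,s_2)$ (which the paper derives in the same way as in Step 2, the extra "product term" arising from the boundary term that appears when one reverses the order of the two nested integrals after the Fricke substitution) to $s_2=1$. On the left-hand side, invoke \eqref{integral-expression} to write
\begin{equation}
I_{i\infty}^0(v,s_1,1) = \frac{i^{s_1+1}}{(2\pi)^{s_1+1}}\Gamma(s_1) L(f_1,f_2,s_1,1).
\end{equation}
On the right-hand side, apply \eqref{integral-expression} to $I_{i\infty}^0(Wv,2-s_1,1)$ and the length-$1$ identity to the product $I_{i\infty}^0(Wf_1\,dz,2-s_1)\,I_{i\infty}^0(Wf_2\,dz,1)$. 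Writing $Wf_i=\epsilon_i\hat f_i$ and using that the Dirichlet series $L(\cdot,\cdot,s,1)$ is bilinear in the forms translates the Fricke-twisted values into $\epsilon_1\epsilon_2 L(\hat f_1,\hat f_2,2-s_1,1)$ and $\epsilon_i L(\hat f_i,s)$. Packaging the powers of $q^{1/2}/(2\pi)$ and $\Gamma$ into the completed functions $\Lambda$ then yields precisely
\begin{equation}
\Lambda(f_1,f_2,s) = -\epsilon_1\epsilon_2\bigl(\Lambda(\hat f_1,\hat f_2,2-s) - \Lambda(\hat f_1,2-s)\Lambda(\hat f_2,1)\bigr).
\end{equation}

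\textbf{Main obstacle.} The analytic input is light; the bookkeeping is what must be done carefully. The delicate point is the second (length-$1$ product) term in the functional equation for $I_{i\infty}^0(v,s)$, which records the fact that iterated integrals are not simply multiplicative under Fricke. Verifying that the gamma factors, the powers of $q$, the signs $(-1)^{1-(s_1+s_2)}$, and the specialisation $s_2=1$ combine to produce exactly $\Lambda(\hat f_1,2-s)\Lambda(\hat f_2,1)$ with the correct sign is the only step that requires genuine care; once this bookkeeping is in place the result is immediate.
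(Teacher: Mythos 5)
Your proposal follows essentially the same route as the paper: establish entirety of the iterated Mellin integral $I_{i\infty}^0(v,s)$ from the cuspidality of $f_1,f_2$, divide out $\Gamma$-factors via \eqref{integral-expression} and the length-one identity to get continuation, and then derive the functional equation by the Fricke substitution $z\mapsto -1/(qz)$, with the cross term coming from re-expressing the resulting iterated integral (with base point moved to $0$) in terms of iterated integrals based at $i\infty$ via the composition-of-paths/reversal formulas of Proposition~\ref{prop:symbols-properties}. This is precisely the argument the paper carries out in the paragraphs preceding the theorem statement, so your proof is correct and matches the intended one.
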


\subsection{Additive twists of multiple \texorpdfstring{$L$}{L}-functions.}
In the previous section we analyzed the iterated integral $I_{i\infty}^0(v,s_1,1)$ and saw how it relates to $L(f_1,f_2,s,1)$. For the groups $\Gamma_0(q)$ with $q>1$ the cusps at $i\infty$ and $0$ are inequivalent. We will now see how for equivalent cusps $i\infty$ and $a/c$ the iterated integral $I_{i\infty}^{a/c}(v)$ may be interpreted as the function  $L(f_1,f_2,s,1)$ twisted by the additive character $n\mapsto e(\frac{a}{c}n)$ evaluated the central point $s=1$.

Consider a cusp $a/c$ equivalent to the cusp at infinity. i.e. $a/c=\gamma(i\infty)$ for some $\g\in \G$. Analogous to  \eqref{integral-representation} we define the function $I_{i\infty}^{\frac{a}{c}}(v,s)$ by
\begin{align}\label{integral-representation-general}
\int_{i\infty}^{a/c}\!\!\!\!\!f_1(z_1)(z_1-\frac{a}{c})^{s_1-1}\!\!\!\int_{i\infty}^{z_1}\!\!\!\!\!f_2(z_2)(z_2-\frac{a}{c})^{s_2-1}\!\!\!\!\!\ldots\int_{i\infty}^{z_{l-1}}\!\!\!\!\!f_l(z_l)(z_1-\frac{a}{c})^{s_l-1}dz_l\ldots dz_1
\end{align}
and notice that this converges uniformly and absolutely for $s_i$ in any compact subset of $\C$, and that $I_{i\infty}^{a/c}(v)=I_{i\infty}^{a/c}(v,1)$.

Plugging in the Fourier expansion we find that for $\Re(s_1)$, $\Re(s)>2$
\begin{align}
\label{intrep-l1}I_{i\infty}^{\frac{a}{c}}(f(z)dz,s)&=-\frac{i^s}{(2\pi)^s}\Gamma(s)L(f,a/c,s)\\
\label{intrep-l2}I_{i\infty}^{\frac{a}{c}}(f_1dz_1f_2dz_2,s_1,1)&=\frac{i^{s_1+1}}{(2\pi)^{s_1+1}}\Gamma(s_1)L(f_1,f_2,a/c,s_1,1)
\end{align}
Here \begin{align}
    L(f,a/c,s)&=\sum_{n=1}^\infty\frac{a_f(n)e(\frac{a}{c}n)}{n^s}\\
    L(f_1,f_2,a/c,s_1,1)&=\sum_{n_1,n_2=1}^\infty\frac{a_1(n_1)a_2(n_2)e(\frac{a}{c}(n_1+n_2))}{n_2(n_1+n_2)^{s_1}}
\end{align}
are the length 1 and 2 $L$-functions twisted by additive characters. These are absolutely convergent for $\Re(s)>2$. 

By using $\gamma(-d/c+i/(cy))=a/c+iy/c$, $j(\gamma, -d/c+i/(cy))=)i/y$, the automorphicity of $f_i$, and a change of variable we find that
\begin{align*}
    I_{i\infty}^{a/c}(f(z)dz,s) &=(-1)^sc^{2-2s}I_{i\infty}^{-d/c}(f(z)dz,2-s)\\
    I_{i\infty}^{a/c}(v,s)=&(-1)^{s_1+s_2}c^{2(2-(s_1+s_2))}\\ & \cdot(I_{i\infty}^{-d/c}(f_1dz,2-s_1)I_{i\infty}^{-d/c}(f_2dz,2-s_2)-I_{i\infty}^{-d/c}(v,2-s))
\end{align*}
where $v=f_1dz_1f_2dz_2$. From these observations we arrive, after a small computation, at the following theorem.
\begin{thm}\label{thm:functional-eq-multiple-zeta-twisted} For each cusp $a/c=\gamma(i\infty)$ with $\g\in \G$ the series $L(f,a/c,s)$, $L(f_1,f_2,a/c,s_1,1)$ admit meromorphic continuation to $s,s_1 \in \C$. The completed $L$-functions
\begin{align}
    \Lambda_{\frac{a}{c}}(f,s)&=\left(\frac{c}{2\pi}\right)^{s}\Gamma(s)L(f, \frac{a}{c},s), \\
    \Lambda_{\frac{a}{c}}(f_1,f_2,s)&=\left(\frac{c}{2\pi}\right)^{s+1}\Gamma(s)L(f_1,f_2,\frac{a}{c} ,s,1)
\end{align}
satisfies the following functional equations
\begin{align}
    \Lambda_{\frac{a}{c}}(f,s)&=-\Lambda_{-\frac{d}{c}}(f,2-s) \\
    \Lambda_{\frac{a}{c}}(f_1,f_2,s)&=-(\Lambda_{-\frac{d}{c}}(f_1,f_2,2-s)-\Lambda_{-\frac{d}{c}}(f_1,2-s)\Lambda_{-\frac{d}{c}}(f_2,1)).
\end{align}
Here $-\frac{d}{c}=\gamma^{-1}(i\infty)$.
\end{thm}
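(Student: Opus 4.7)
The plan is to derive both the analytic continuation and the functional equations directly from the integral representations \eqref{intrep-l1}, \eqref{intrep-l2}, together with the automorphicity-driven identities for $I_{i\infty}^{a/c}(\cdot)$ that are stated just above the theorem.

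For the continuation, I observe that the integrals \eqref{integral-representation-general} are entire in the variables $s_i$: a path from $i\infty$ to $a/c$ lies in finitely many fundamental domains, the cusp forms $f_j$ decay exponentially at all cusps of $\Gamma$, and the factors $(z_i - a/c)^{s_i-1}$ grow only polynomially. Solving \eqref{intrep-l1} and \eqref{intrep-l2} for the $L$-series then expresses $L(f,a/c,s)$ and $L(f_1,f_2,a/c,s_1,1)$, originally defined only for $\Re(s) > 2$, as $\Gamma(s)^{-1}$ (respectively $\Gamma(s_1)^{-1}$) times an entire function; since $1/\Gamma$ is entire, this yields analytic continuation to the whole of $\C$.

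For the functional equations, I would substitute $z_i = \gamma w_i$ in \eqref{integral-representation-general}, using the three elementary identities $f_j(\gamma w)\,d(\gamma w) = f_j(w)\,dw$, $\gamma w - a/c = -1/(c\,j(\gamma,w))$, and $j(\gamma,w)/c = w + d/c$. The outer path from $i\infty$ to $a/c$ pulls back to a path from $\gamma^{-1}(i\infty) = -d/c$ to $\gamma^{-1}(a/c) = i\infty$, so in the length-$1$ case a single orientation reversal immediately yields the identity $I_{i\infty}^{a/c}(fdz,s) = (-1)^s c^{2-2s}\,I_{i\infty}^{-d/c}(fdz, 2-s)$ stated in the excerpt. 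In the length-$2$ case, the \emph{inner} path becomes $\int_{-d/c}^{w_1}$, and splitting it using the composition of paths formula as $\int_{-d/c}^{i\infty} + \int_{i\infty}^{w_1}$ produces, after reversing orientations, exactly $-I_{i\infty}^{-d/c}(v, 2-s_1, 2-s_2)$ plus a product term $I_{i\infty}^{-d/c}(f_1dz, 2-s_1)\,I_{i\infty}^{-d/c}(f_2dz, 2-s_2)$, matching the displayed identity. Substituting these into \eqref{intrep-l1}, \eqref{intrep-l2} (with $s_2 = 1$ in the length-$2$ case) and simplifying the phase factors via $(-1)^s i^{2-2s} = -1$ then yields the claimed functional equations for $\Lambda_{a/c}(f,s)$ and $\Lambda_{a/c}(f_1,f_2,s)$, with the extra piece appearing as the product $\Lambda_{-d/c}(f_1, 2-s)\Lambda_{-d/c}(f_2, 1)$.

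The main technical obstacle is a consistent choice of branches for the non-integer powers $(z_i - a/c)^{s_i - 1}$ and $j(\gamma,w)^{1-s_i}$, so that the simplification $(-1/(c\,j(\gamma,w)))^{s-1} = (-1)^{s-1}c^{1-s}j(\gamma,w)^{1-s}$ and the phase collapse $(-1)^s i^{2-2s} = -1$ are valid without sign ambiguity. Since the contour can be taken inside $\H$, the standard principal branch of $\log$ provides a coherent choice, and uniformity in $s$ follows by continuity. The additional product term in the length-$2$ functional equation is a genuine feature forced by the splitting of the inner contour, and reflects the composition-of-paths structure of iterated integrals; it has no analogue in Hecke's classical functional equation for single $L$-functions and is the first nontrivial novelty in this setting.
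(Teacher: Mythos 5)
Your proposal is correct and follows essentially the same route as the paper: represent the twisted $L$-functions by the entire iterated integrals \eqref{integral-representation-general} via \eqref{intrep-l1}--\eqref{intrep-l2} to get continuation, then perform the change of variables $z \mapsto \gamma z$ in the integral (the paper writes this as the explicit parametrization $\gamma(-d/c+i/(cy))=a/c+iy/c$, $j(\gamma,-d/c+i/(cy))=i/y$), using automorphicity and, in length two, the composition-of-paths decomposition of the pulled-back inner contour $\int_{-d/c}^{w_1}=\int_{-d/c}^{i\infty}+\int_{i\infty}^{w_1}$, which is precisely what produces the extra product term $\Lambda_{-d/c}(f_1,2-s)\Lambda_{-d/c}(f_2,1)$. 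Your remark on branch choices for $(z-a/c)^{s-1}$ and the phase collapse $(-1)^s i^{2-2s}=-1$ is a worthwhile explication of a point the paper leaves implicit.
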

\begin{rem} Note that by \eqref{intrep-l1} and \eqref{intrep-l2} we have 
\begin{align}
    L(f,a/c,1)&=2\pi i I_{i\infty}^{a/c}(f(z)dz)\\
    L(f_1,f_2,a/c,1)&=(2\pi i)^2I_{i\infty}^{a/c}(f_1dz_1f_2dz_2)
\end{align}
More generally one finds that 
\begin{align}
    L(f_1,\ldots,f_l,a/c,s_1, 1_{l-1})&=\sum_{n_1,\ldots, n_l=1}^{\infty}\frac{a_1(n_1)\cdots a_l(n_l)e(\frac{a}{c}(n_1+\cdots n_l))}{(n_1+\cdots+ n_l)^{s_1}(n_2+\cdots+ n_l)\cdots(n_{l-1}+n_l)n_l}\\
&=\sum_{0=m_{l+1}<m_l<\ldots <m_1   }\frac{a_1(m_1-m_2)\cdots a_l(m_l-m_{l+1})e(\frac{a}{c}m_1)}{m_1^{s_1}m_2\cdots m_l}
\end{align}
converges for $\Re(s_1)>1+l/2$ and admits analytic continuation to $s\in\C$. This continuation has central value
\begin{equation}
    L(f_1,\ldots,f_l,a/c, 1)=(2\pi i)^lI_{i\infty}^{a/c}(f_1dz_1\ldots f_ldz_l)
\end{equation}
where we write  $1$ instead of $1_l$.
In combination with  \eqref{shuffle-product} this shows that these central values surprisingly satisfy a shuffle product formula, namely
\begin{align}
L(f_{i_1}\ldots f_{i_m},a/c,1)&L(f_{i_{m+1}}\ldots f_{i_{m+n}},a/c,1)\\& =\sum_{\sigma \in \Sigma_{m,n}}L(f_{i_{\sigma(1)}}\ldots f_{i_{\sigma(m+n)}},a/c,1).
\end{align}

\end{rem}
\section{Numerics}
\label{sec:numerics}
When we want to compute numerically  $I_{i\infty}^{a/c}(v)$ where $\frac a c=\g(i\infty)$ the following considerations are useful: 

Let $z_0\in \H$. By Proposition \ref{prop:symbols-properties} \ref{decomposition-paths}, \ref{gamma-invariance}, and the reversal of path formula \cite[Thm 3.19]{GilFresan:2017}  we find that in the lenght 1 case
\begin{align}
    I_{i\infty}^{a/c}(v_1)&=-I_{i\infty}^{\g^{-1}z_0}(v_1)+I_{i\infty}^{z_0}(v_1)\\
\intertext{and in the length 2 case}
 I_{i\infty}^{a/c}(v_1v_2)&=I_{i\infty}^{\g^{-1}z_0}(v_2v_1)-I_{i\infty}^{\g^{-1}z_0}(v_1)I_{i\infty}^{z_0}(v_2)+I_{i\infty}^{z_0}(v_1v_2)
\end{align}
Using the $q$-expansion of the cusp forms these integrals converge faster if $\g^{-1}z_0$ and $z_0$ has as large an imaginary part as possible. Balancing the two imaginary parts we find that $\abs{j(\g^{-1},z_0)}=1$ with $\Im(z_0)$ as large as possible is optimal, so we  choose \begin{equation}
    z_0=\frac a c+\frac{i}{\abs{c}}.
\end{equation}  Hence we will get good accuracy if we  truncate the $q$-expansion at a height $N$ where $N/\abs{c}$ is not small. 

Since we are interested in letting $c$ grow to infinity it is convenient to compute the integrals only for a set of generators of the group $G$, as these has a bounded value of $c$. We then use the following relations to compute $I_{i\infty}^{\g i\infty}(v_1v_2)$ for a general $g$. 

By repeated use of Proposition \ref{prop:symbols-properties} \ref{decomposition-paths}, \ref{gamma-invariance} we see that 
\begin{equation}
    I_{i\infty}^{\g_1\ldots \g_L i\infty}(v_1v_2)=\sum_{j=1}^L I_{i\infty}^{\g_j i\infty}(v_1v_2)+\sum_{1\leq j\leq l\leq L}I_{i\infty}^{\g_l i\infty}(v_1)I_{i\infty}^{\g_j i\infty}(v_2)
\end{equation}
We note that the computational error on the values of  $I_{i\infty}^{\g_j i\infty}(v_i)$ accumulates quadratically in the word length, whereas the computational error on $I_{i\infty}^{\g_j i\infty}(v_1v_2)$ accumulates only linearly in the word length. 

Using the same technique one finds that for all $n_j\in \Z$ we have 
\begin{align}
    I_{i\infty}^{\g_1^{n_1}\ldots \g_L^{n_L} i\infty}(v_1v_2)&=\sum_{j=1}^L n_jI_{i\infty}^{\g_j i\infty}(v_1v_2) +\frac{n_j(n_j-1)}{2}I_{i\infty}^{\g_j i\infty}(v_1)I_{i\infty}^{\g_j i\infty}(v_2)\\&\quad +\sum_{1\leq j\leq l\leq L}n_jn_lI_{i\infty}^{\g_l i\infty}(v_1)I_{i\infty}^{\g_j i\infty}(v_2).
\end{align}

Computer programs such as SAGE has good algorithms which uses Farey symbols to find generators for congruence groups and to solve the the word problem in this context. These are based on \cite{Kulkarni:1991}  (See also \cite{KurthLong:2008}). Using the above techniques we plot various histograms of $(\frac{\vol{\GmodH}}{4\log(c^2)})^{l/2}(2\pi i)^l I_{i\infty}^{a/c}(v)$ with logarithmic colorgrading. For the first 5 plots we use $q=37$ and $M=20000$, which gives about 3 million datapoints. For the final plot we have used $q=41$ and $M=5500$ which gives about $2.2\cdot 10^5$ datapoints. In all the plots we observe the radial symmetry predicted by Theorem \ref{final-convergence-result}.

\begin{figure}[H]
    \centering
    \begin{subfigure}[b]{0.43\textwidth}
        \includegraphics[width=\textwidth]{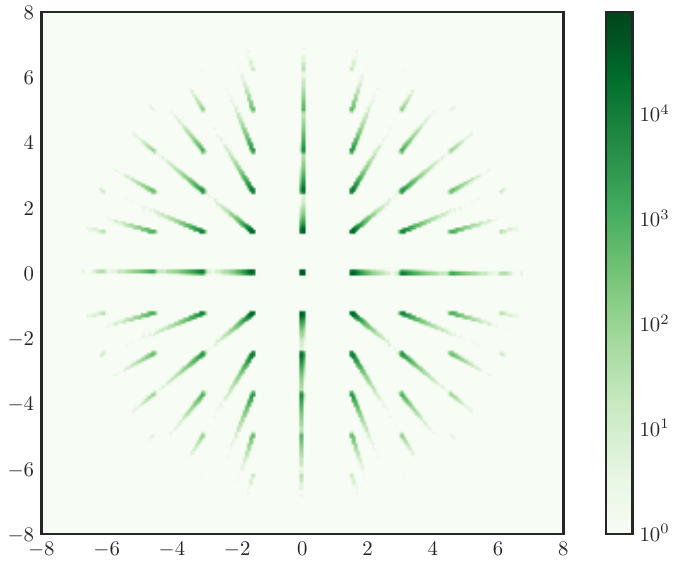}
        \caption{length 1}
    \end{subfigure}
    ~ 
    \begin{subfigure}[b]{0.43\textwidth}
        \includegraphics[width=\textwidth]{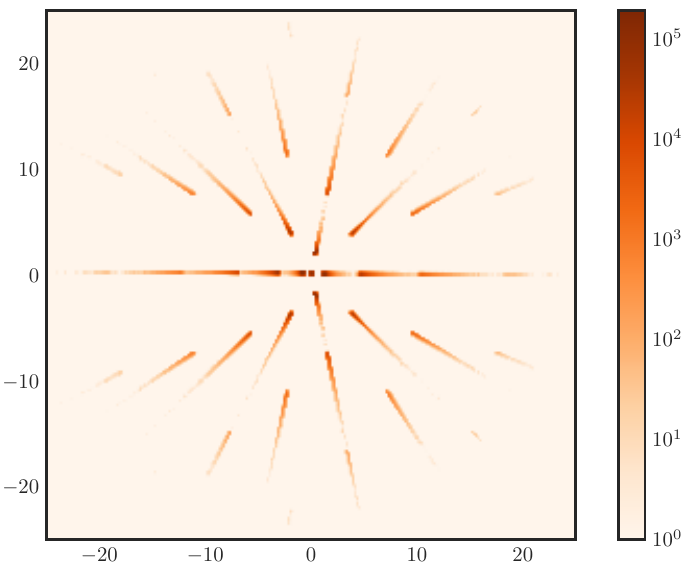}
        \caption{length 2, same form}
    \end{subfigure}
\end{figure}
\begin{figure}[H]
    \centering
    \begin{subfigure}[b]{0.43\textwidth}
        \includegraphics[width=\textwidth]{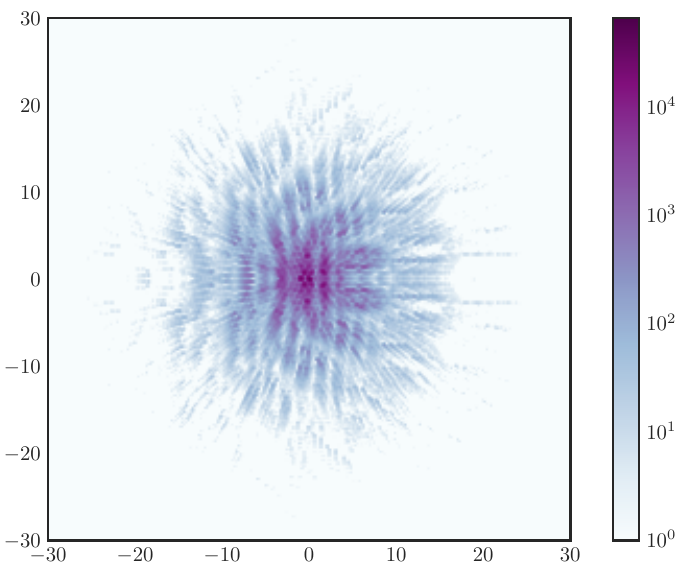}
        \caption{length 2, different non-orth. forms}
    \end{subfigure}
    ~ 
    \begin{subfigure}[b]{0.43\textwidth}
        \includegraphics[width=\textwidth]{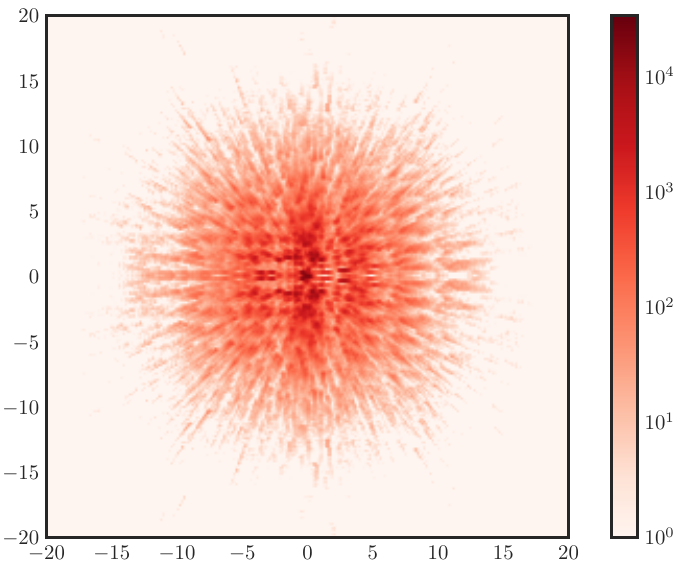}
        \caption{length 2, orthogonal forms}
    \end{subfigure}
\end{figure}
\begin{figure}[H]
    \centering
    \begin{subfigure}[b]{0.43\textwidth}
        \includegraphics[width=\textwidth]{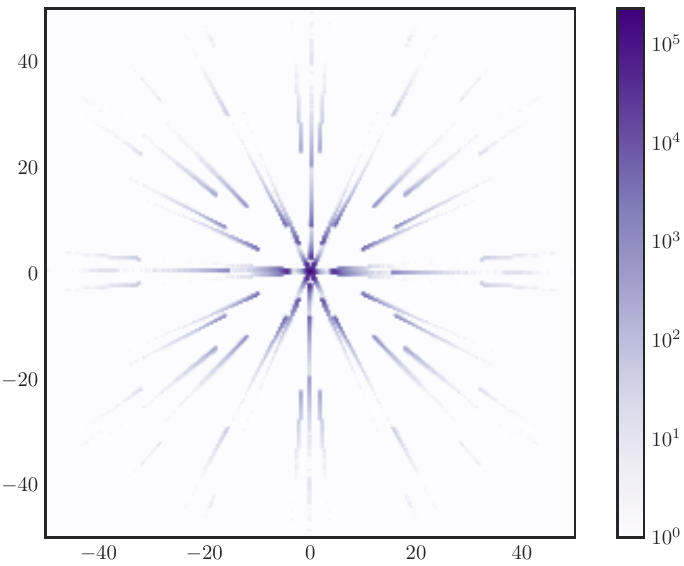}
        \caption{length 3, same forms}
    \end{subfigure}
    ~ 
    \begin{subfigure}[b]{0.43\textwidth}
        \includegraphics[width=\textwidth]{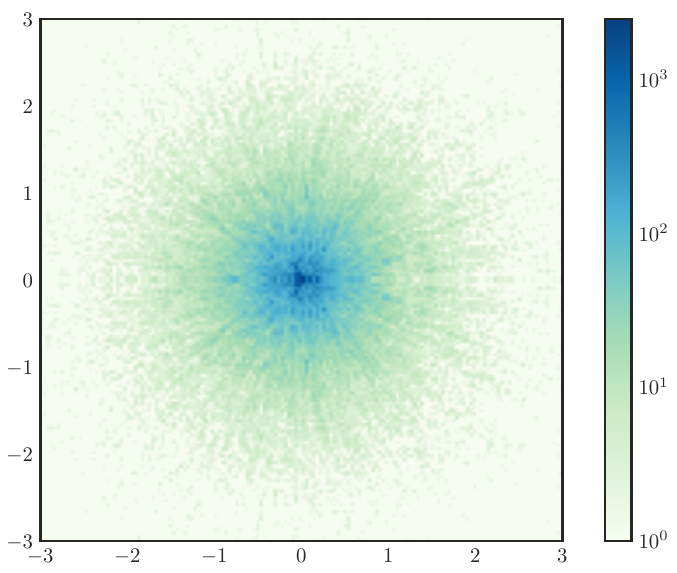}
        \caption{length 3, different forms}
    \end{subfigure}
\end{figure}

\section*{Acknowledgments} We are grateful to Christian Berg for many useful comments on Section \ref{sec:probability}, and to Wadim Zudilin, Christian Krattenthaler and Guoniu Han for discussions and correspondence that led to \eqref{combinatorial-identity}.

\bibliographystyle{amsplain}
\providecommand{\bysame}{\leavevmode\hbox to3em{\hrulefill}\thinspace}
\providecommand{\MR}{\relax\ifhmode\unskip\space\fi MR }
\providecommand{\MRhref}[2]{%
  \href{http://www.ams.org/mathscinet-getitem?mr=#1}{#2}
}
\providecommand{\href}[2]{#2}

\end{document}